\DeclareMathOperator{\sech}{sech}
\newtheorem{theorem}{Theorem}[section] 
\newtheorem{lemma}[theorem]{Lemma}     
\newtheorem{corollary}[theorem]{Corollary}
\newtheorem{proposition}[theorem]{Proposition}
\def\eproof{$\Box$ \medskip}
\def\chb{{\rm Dome}(\Omega)}
\newcommand\rs{\hat{\mathbb{C}}}
\newcommand\C{\mathbb{C}}
\newcommand\D{\mathbb{D}}
\newcommand\R{\mathbb{R}}
\newcommand\Sp{\mathbb{S}}
\newcommand\U{\mathbb{U}}
\newcommand\Ht{\mathbb{H}^3}
\newcommand\Hp{\mathbb{H}^2}
\newcommand\Z{\mathbb{Z}}
\newcommand\EMM{Epstein-Marden-Markovic}
\title[Improved bound for Sullivan's convex hull theorem]
 {An improved bound for Sullivan's  convex hull theorem} 
\author{M. Bridgeman, R. Canary and A. Yarmola}
\begin{document}
\maketitle

\begin{abstract}
Sullivan showed that there exists $K_0$ such that if $\Omega\subset \rs$ is a simply connected hyperbolic 
domain, then there exists a conformally natural
$K_0$-quasiconformal map from $\Omega$ to the boundary $\chb$ of
the convex hull of its complement which extends to the identity on $\partial\Omega$. Explicit upper and lower
bounds on $K_0$ were obtained by Epstein, Marden, Markovic and Bishop. We improve on these bounds,
by showing that one may choose $K_0\le 7.1695$.
\end{abstract}

\section{Introduction}

In this paper we consider the relationship between the Poincar\'e metric on a hyperbolic simply connected domain 
$\Omega$ in $\rs = \partial \Ht$ and the geometry of the boundary $\chb$ of the convex core of its complement in $\Ht$. 
Sullivan \cite{sullivan} (see also Epstein-Marden \cite{EM87}) 
showed that  there exists $K_0  > 0$ such if $\Omega$ is simply connected,
then there is a conformally natural \hbox{$K_0$-quasiconformal} map $f:\Omega \rightarrow \chb$  
which extends to the identity on $\partial\Omega$.  Epstein, Marden and Markovic provided upper and lower bounds
for the value of $K_0$.

\begin{theorem}{\rm (Epstein-Marden-Markovic \cite{EMM1,EMM2})}
There exists $K_0 \leq 13.88$ such that if $\Omega\subset\rs$ is a simply connected hyperbolic domain, then there is
a conformally natural \hbox{$K_0$-quasiconformal} map  $f:\Omega\to\chb$ which extends continuously to the identity on 
$\partial \Omega\subset\rs$.
Moreover, one may not choose $K_0\le 2.1$.
\label{emmKeq}
\end{theorem}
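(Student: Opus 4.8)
The plan is to prove the two assertions separately: the universal upper bound $K_0\le 13.88$, and the construction of a domain forcing $K_0>2.1$.

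\textbf{Setting up the upper bound.} I would fix the conformally natural map once and for all to be the nearest point retraction $r\colon\Omega\to\chb$, which sends $z\in\Omega$ to the first point of the convex hull of $\rs\setminus\Omega$ in $\Ht$ met by the family of horoballs based at $z$ (equivalently, one may take the Epstein map of the Poincar\'e metric $\rho_\Omega$). Because the convex hull and the retraction are constructed isometry-equivariantly, $r$ commutes with every M\"obius transformation preserving $\Omega$, and Epstein--Marden already show that for simply connected $\Omega$ it is a homeomorphism restricting to the identity on $\partial\Omega$; so the entire content is a bound on its dilatation (after, if necessary, a conformally natural smoothing of $r$, since $r$ can fail to be differentiable over the bending lamination). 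The decisive reduction is that the pointwise dilatation at $z$ depends only on the density of $\rho_\Omega$ at $z$ together with the local geometry of $\chb$ near $r(z)$, and $\chb$ is a \emph{pleated plane}: intrinsically a copy of $\Hp$, obtained by bending a totally geodesic plane in $\Ht$ along a measured geodesic lamination $\mu$. After normalizing by conformal naturality, the estimate therefore reduces to one universal question, for every $\mu$: how does the Poincar\'e metric of the complementary domain $\Omega_\mu$ compare, near a given point, with the intrinsic hyperbolic metric of the bent plane?

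\textbf{Controlling the comparison, and the main obstacle.} To answer this I would use complex earthquakes: for fixed $\mu$ and $\zeta$ in the upper half-plane, bending $\Hp$ by $\mathrm{Im}(\zeta)\mu$ and shearing by $\mathrm{Re}(\zeta)\mu$ yields a holomorphic family of domains $\Omega_{\zeta\mu}$ with $\Omega_0=\Hp$. Holomorphy of the family makes the Poincar\'e density depend on $\zeta$ in a controlled way, so a Schwarz--Kobayashi argument bounds the Beltrami coefficient of the induced map in terms of $\mathrm{Im}(\zeta)$; integrating this differential inequality along $\zeta=it$, with $t$ running up to the relevant bending angle, gives a dilatation bound that depends only on the profile of $\mu$. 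Optimizing over all $\mu$ then produces the explicit value $13.88$, with the crescent --- obtained by folding a totally geodesic plane by an angle approaching $\pi$ along a geodesic --- as the basic model computation (a round disk, $\mu=0$, gives dilatation $1$). I expect this optimization to be the main obstacle: one needs the sharp form of the crescent model and careful control of how the local estimates are assembled over $\Omega$ without loss, and this is precisely where a weaker but easier bound appears if one is not careful.

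\textbf{The lower bound.} For ``one may not choose $K_0\le 2.1$'' I would exhibit the logarithmic spiral region: a domain $\Omega\subset\rs$ bounded by two logarithmic spirals $\{re^{i(a\log r+c)}\}$, which is invariant under the one-parameter loxodromic group $z\mapsto e^{(a+i)s}z$, $s\in\R$. For such $\Omega$ the convex hull of $\rs\setminus\Omega$, hence $\chb$ and its bending lamination, can be computed explicitly: one obtains a pleated plane bent along a spiralling lamination with constant transverse density determined by $a$. Now conformal naturality does the work. Any conformally natural $f\colon\Omega\to\chb$ commutes with the loxodromic group, so, quotienting both sides by the cyclic group $\langle z\mapsto e^{(a+i)s_0}z\rangle$, it descends to a map between the annuli $\Omega/\Z$ and $\chb/\Z$, and the requirement that $f$ restrict to the identity on $\partial\Omega$ fixes the homotopy class, and in particular the relative twisting, of this descended map. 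Comparing the modulus of $\Omega/\Z$ with that of the quotient of the pleated plane and accounting for the forced twisting, a $K$-quasiconformal $f$ must satisfy an explicit inequality; choosing the spiral parameter $a$ optimally forces $K>2.1$. The obstacle here is the honest computation --- determining the dome of the spiral region and solving the resulting twisted-annulus extremal problem precisely enough that the constant $2.1$, rather than merely $K>2$, emerges.
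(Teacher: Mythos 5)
This theorem is not proved in the paper under review; it is quoted from Epstein--Marden--Markovic [EMM1, EMM2] as background, and the paper's own contribution is to improve the constant via the machinery in Sections~5--7. What the paper does reprove, in generalized form, is the upper-bound mechanism (Theorem~\ref{qc bound}), so that is the appropriate thing to measure your sketch against.

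Your upper-bound plan has a substantive flaw. You fix the conformally natural map to be the nearest point retraction $r$ (or a conformally natural smoothing) and aim to bound its \emph{pointwise} dilatation via the local geometry of $\chb$. That is not what Epstein--Marden--Markovic do, and it cannot produce their constant: $13.88$ is not a dilatation bound for $r$, but for an \emph{extremal} quasiconformal map in the bounded homotopy class of $r$. The argument is global and Teichm\"uller-theoretic. One constructs, from complex earthquakes $\C E_z$, a holomorphic map from a planar domain $\mathcal{T}^L$ into universal Teichm\"uller space $\mathcal U(\Gamma)$, applies the Schwarz--Pick inequality for the Kobayashi ($=$ Teichm\"uller) metric to bound $d_{\mathcal U(\Gamma)}(\bar F(ic),\bar F(0))\le d_{\mathcal T^L}(ic,0)$, and then realizes that Teichm\"uller distance by an extremal quasiconformal extension $h$ of the boundary homeomorphism; the final map $P_{c\mu}\circ h^{-1}\circ R$ is only \emph{boundedly homotopic} to the nearest point retraction (see the Remark at the end of Section~7). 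Your ``pointwise dilatation depends only on local data'' reduction would at best bound the dilatation of $r$ itself, which is a different (and weaker) quantity, and would not survive the optimization you gesture at. The complex-earthquake and Kobayashi-metric ingredients you name are the right ones, but they enter as a holomorphic motion of the boundary circle and an extremality argument, not as a local estimate on $r$.

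On the lower bound, the logarithmic spiral with its loxodromic symmetry group, the passage to cyclic quotient annuli, and the extremal-length obstruction coming from the forced twisting is indeed the mechanism behind the Epstein--Marden--Markovic bound, so that part of the sketch is on target. One clarification: the paper attributes the statement that even the non-conformally-natural constant exceeds $2$ to the separate paper Epstein--Markovic [EM05], also via a logarithmic spiral; the $2.1$ in the present statement is specifically the conformally natural case, where the equivariance requirement is what forces the descended annulus map into a fixed homotopy class.
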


We recall that $f$ is said to be {\em conformally natural} if whenever $A$ is a conformal automorphism of $\rs$
which preserves $\Omega$,
then $\bar A\circ f=f\circ\Omega$ where $\bar A$ is the extension of $A$ to an isometry of $\Ht$.
If one does not require that the quasiconformal map $f:\Omega\to\chb$ be conformally natural,
Bishop \cite{bishop} obtained a better uniform bound on the quasiconformality constant. Epstein and Markovic \cite{EM05}
showed that even in this setting one cannot uniformly bound the quasiconformality constant above by 2.

\begin{theorem}{\rm (Bishop \cite{bishop})}
There exists $K_1 \leq 7.88$ such that if $\Omega\subset\rs$ is a simply connected hyperbolic domain,
then there is a $K_1$-quasiconformal map $f:\Omega\to\chb$ which extends continuously to the identity on 
$\partial \Omega\subset\rs$.

\end{theorem}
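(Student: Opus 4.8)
\emph{Proof proposal.} Since a conformally natural $K$-quasiconformal map is \emph{a fortiori} a $K$-quasiconformal map, Theorem~\ref{emmKeq} already produces such an $f$ with $K_1\le 13.88$; the gain comes from the fact that, having given up conformal naturality, one is free to build $f$ from locally optimized pieces. First I would reduce to a dense, well-behaved class of domains: every simply connected hyperbolic domain $\Omega\subset\rs$ is a Carath\'eodory kernel limit of domains $\Omega_n$ each bounded by finitely many round arcs, and for such a domain ${\rm Dome}(\Omega_n)$ is bent along only finitely many geodesics on each compact set. If one produces $K_1$-quasiconformal maps $f_n\colon\Omega_n\to{\rm Dome}(\Omega_n)$ restricting to the identity on $\partial\Omega_n$, then a normal families argument --- using that ${\rm Dome}(\Omega_n)\to\chb$ and that the $f_n$ are uniformly quasiconformal, hence (after normalization) equicontinuous on compacta --- yields a subsequential limit $f\colon\Omega\to\chb$ which is $K_1$-quasiconformal by lower semicontinuity of the dilatation, and which restricts to the identity on $\partial\Omega$ because the domes converge in the Hausdorff sense. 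So it suffices to treat finitely bent domes.

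For such a dome, $\chb$ with its intrinsic path metric is isometric to $\Hp$ (Thurston) and is assembled from totally geodesic ``flat pieces'' --- ideal hyperbolic polygons $F_j$ --- glued along its geodesic bending lines with bending angles $\theta_j\in(0,\pi)$. The nearest point retraction $r\colon\Omega\to\chb$ is continuous, conformally natural and restricts to the identity on $\partial\Omega$, but it is \emph{not} quasiconformal: it crushes a two-dimensional lune of $\Omega$ onto each bending geodesic, as one already sees when $\Omega$ is the complement of a crescent (equivalently, a sector of reflex angle). I would therefore build $f$ piece by piece. Each $F_j$ lies in a unique supporting totally geodesic plane of the convex hull, whose circle at infinity bounds a round disk $D_j\subset\rs$, and the nearest point retraction of a \emph{single} round disk onto its dome is a conformal isometry; so there is a canonical $1$-quasiconformal identification of the appropriate subregion $\Omega_j\subset\Omega$ with $F_j$. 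Across the collar neighbourhoods of the bending lines, where two such identifications must be reconciled, one interpolates; the local model is the complement of a crescent of angle $\al$ together with its dome, which consists of exactly two half-planes meeting along one geodesic (here $\al$ and the bending angle $\theta$ satisfy $\theta=\pi-\al$), and the interpolating quasiconformal map can be written down explicitly.

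Because the flat-piece identifications are conformal and the interpolations are supported in disjoint collars whose model depends only on the local bending angle, the dilatation of $f$ is governed by a one-parameter extremal problem:
\[
K_1\ \le\ \sup_{\al\in(0,\pi)}K(\al),
\]
where $K(\al)$ is the least dilatation of a quasiconformal map from the complement of a crescent of angle $\al$ onto its dome that restricts to the identity on the boundary. One then writes out the Beltrami coefficient of the explicit interpolant, optimizes its free parameters, and maximizes over $\al$; the claim is that this maximum is at most $7.88$. The bound is purely local, so it does not deteriorate as the number of bending lines grows and therefore survives the limit of the first step.

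The main obstacle is precisely this extremal estimate: identifying the correct local model, choosing the most efficient interpolant across a single bending geodesic (this is exactly where abandoning conformal naturality helps, since the interpolant may be asymmetric and tailored rather than averaged as in the conformally natural construction), computing its dilatation, and optimizing over the bending angle to reach $7.88$. Two further technical points require care. One must check that the pieces genuinely assemble into a homeomorphism $\Omega\to\chb$ satisfying the global bound --- this requires controlling the collars where bending lines accumulate and verifying that $f$ is absolutely continuous on lines with the stated a.e.\ dilatation bound, which uses that the set collapsed by $r$ has zero area. And one must ensure, in the limiting argument, that the limit map still restricts to the identity on $\partial\Omega$, which follows from the Hausdorff convergence of the domes together with the equicontinuity supplied by the uniform quasiconformal bound.
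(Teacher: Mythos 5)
This statement is quoted from Bishop's paper and is not proved in the paper you are reading; the authors state it only for context, so there is no in-paper proof to compare your proposal against.

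Taken on its own terms, the proposal's piece-by-piece plan has a genuine gap at exactly the point you flag as "the main obstacle," and it is more than a computation left to the reader. You assert that "the interpolations are supported in disjoint collars whose model depends only on the local bending angle," so that the dilatation bound "is purely local, so it does not deteriorate as the number of bending lines grows." That is not justified, and as stated it is false: in a finitely bent approximation of a general $\Omega$ (e.g.\ near a corner of $\partial\Omega$), the bending geodesics come arbitrarily close together, the collars you must interpolate across become arbitrarily thin, and an interpolation that changes a support-plane direction by a fixed angle across a collar of shrinking width has dilatation that blows up. What saves constructions of this type is a quantitative input your proposal never invokes: a uniform bound on the total bending per unit length of the dome (the "roundness"/"average bending" estimates of Epstein--Marden, Bridgeman, and this paper, e.g.\ Theorems~\ref{bendbound} and~\ref{bendboundbetter}). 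That bound forces nearby bending lines to carry correspondingly small angles, which is precisely what keeps local interpolations uniformly quasiconformal and makes the passage to the limit work. Without it, your extremal problem $\sup_{\al} K(\al)$ over single crescents controls only the isolated-bend case, and the reduction to finitely bent domes does not give a constant independent of the number and spacing of bends. So the missing idea is the quantitative bending estimate, not merely an explicit Beltrami computation for one crescent; and the claimed locality of the bound is the step that would fail.
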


In this paper, we obtain a bound in the conformally natural setting, which improves on both of these bounds.

\begin{theorem}
\label{main}
There exists  $K_0 \leq 7.1695$ such that if $\Omega\subset\rs$ is a simply connected hyperbolic domain,
then there is a conformally natural \hbox{$K_0$-quasiconformal} map \hbox{$f:\Omega\to\chb$}
which extends continuously to the identity on $\partial \Omega\subset\rs$.
\end{theorem}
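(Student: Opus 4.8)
The plan is to follow the now-classical strategy introduced by Epstein, Marden and Markovic, in which the quasiconformality constant $K_0$ is bounded by a supremum over a two-parameter family of explicit "model" maps, and then to improve the estimate by a more careful analysis of that supremum. Recall that the Epstein map sends a point $z\in\Omega$, together with the Poincar\'e metric data at $z$, to a point on $\chb$ via the envelope of the family of horospheres; its local behaviour is governed by the \emph{visual} or \emph{Thurston} derivatives, which in suitable coordinates reduce to a map depending only on two real parameters coming from the second-order jet of the conformal structure (essentially the ratio of principal "bending" coordinates). First I would set up these normalisations carefully: using conformal naturality to place $z$ at a standard point, reduce the computation of the pointwise quasiconformal distortion to a function $\Phi(\eta_1,\eta_2)$ of the two invariants, where $\eta_1,\eta_2$ record the osculating data of $\partial\Omega$ (equivalently, the second fundamental form / bending of $\chb$).

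The core of the argument is then the inequality $K_0\le \sup_{(\eta_1,\eta_2)}\Phi(\eta_1,\eta_2)$, which requires two ingredients. The first is a \emph{domain restriction}: using Sullivan's / Ahlfors-type rigidity together with the Nehari-type bound on the Schwarzian of a univalent function (the norm is at most $3/2$ in the hyperbolic sup-norm), one shows that the pair $(\eta_1,\eta_2)$ is constrained to a compact region $\mathcal{R}\subset\R^2$ — this is exactly where simple connectivity of $\Omega$ enters, via the univalence of the developing/uniformising map. The second is an \emph{explicit optimisation} of $\Phi$ over $\mathcal{R}$. Here I would not attempt a closed-form maximum; instead I would obtain rigorous rational upper bounds for $\Phi$ by splitting $\mathcal{R}$ into finitely many boxes, bounding $\Phi$ on each box by interval arithmetic / monotonicity in each variable separately, and checking that every box gives a value below $7.1695$. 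The numerical constant $7.1695$ strongly suggests that the maximum is not attained at a symmetric or boundary configuration but at an interior critical point found numerically, so the write-up will isolate a neighbourhood of that critical point and treat it with a second-order Taylor estimate, while the rest of $\mathcal{R}$ is dispatched by the cruder box bounds.

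The main obstacle I anticipate is controlling $\Phi$ near the boundary of the admissible region, where the Epstein map degenerates (the dome touches $\rs$, bending becomes large) and naive bounds blow up; the improvement over the $13.88$ of Theorem~\ref{emmKeq} must come precisely from a sharper estimate there, presumably by combining the derivative bounds of Epstein-Marden-Markovic with Bishop's refinements and a better use of the conformal-naturality normalisation to absorb the worst directions. A secondary difficulty is bookkeeping: one must verify that the two-parameter reduction genuinely captures the \emph{worst} pointwise distortion (i.e.\ that higher-order jet data cannot make things worse once the Schwarzian bound is imposed), which requires a convexity or envelope argument showing the extreme distortion occurs for the "round" osculating configurations. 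Once these pieces are in place, conformal naturality of the Epstein map is automatic from its construction (it is built equivariantly from the hyperbolic geometry of $\Ht$), the map restricts to the identity on $\partial\Omega$ by the horosphere-envelope limiting argument of Epstein-Marden, and assembling the pointwise bound over all of $\Omega$ yields the global $K_0\le 7.1695$.
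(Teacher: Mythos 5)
Your proposal takes a route that is genuinely different from the paper's, and unfortunately it contains gaps that prevent it from being a proof. The paper's argument never performs a pointwise optimisation of the dilatation of the Epstein (horosphere-envelope) map. Instead, it proceeds through the bending-lamination/complex-earthquake machinery: (i) Theorem~\ref{bendboundbetter} bounds the $L$-roundness $\|\mu\|_L$ of the bending lamination of an embedded $\chb$ by the explicit quantity $c_1(L)=2\cos^{-1}(-\sinh(L/2))$; (ii) Theorem~\ref{ej} shows that if $\|\mu\|_L<G(L)$, the pleated plane $P_\mu$ is a uniformly bilipschitz embedding; (iii) these two bounds determine an explicit domain $\mathcal{T}^L\subset\mathbb C$ of parameters $t$ for which the complex earthquake $\mathbb CE_t$ along $\mu$ gives an embedding at infinity, and the holomorphic motion machinery of Theorems~\ref{holo motion} and~\ref{analytic continuation} produces a holomorphic map $\bar F:\mathcal{T}^L\to\mathcal{U}(\Gamma)$; (iv) the Schwarz--Pick/Kobayashi contraction principle then gives $\log K\le d_{\mathcal{T}^L}(ic_1(L),0)$, and the numerical content of the proof is a rigorous (Schwarz--Christoffel) computation of this single hyperbolic distance with $L=1.48$. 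The constant $7.1695$ is a Poincar\'e distance in a planar domain, not the supremum of a two-variable pointwise distortion function.

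The concrete difficulties with your plan are these. First, the claim that the pointwise distortion of a conformally natural map $\Omega\to\chb$ reduces to a function $\Phi(\eta_1,\eta_2)$ of two ``osculating'' invariants, with the pair constrained by a Nehari-type Schwarzian bound, is not established and is not obviously true: the bending of $\chb$ at a given point depends on the full bending measure in a neighbourhood (a genuinely nonlocal quantity), not just on second-order jet data of $\partial\Omega$. Second, even granting such a reduction, it is far from clear that optimising a pointwise dilatation would reach the specific value $7.1695$ — the paper does not obtain its bound by minimising a supremum of local dilatations, but by a Teichm\"uller-metric argument in which the holomorphicity of the earthquake family is essential; there is no reason a pointwise envelope estimate should reproduce a Kobayashi-distance bound. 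Third, the key technical steps you flag as ``the main obstacle'' — controlling the degenerate boundary regime and verifying that higher jets cannot worsen the distortion — are exactly the ones you would need to carry out, and no mechanism for doing so is proposed. As written, the proposal is a research programme rather than a proof, and it does not track the actual source of the improvement in the paper, which lives in the sharper $L$-roundness bound (Theorem~\ref{bendboundbetter}), the new embedding criterion (Theorem~\ref{ej}), and the choice of $L\approx 1.48$ to minimise $d_{\mathcal{T}^L}(ic_1(L),0)$.
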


\medskip\noindent
{\em Outline of argument:}
One may realize $\chb$ as the image of a pleated plane $P_\mu:\Hp\to\Ht$ whose bending
is encoded by a measured lamination $\mu$. Given $L>0$, we define the $L$-roundness  $||\mu||_L$ of $\mu$
to be the least upper bound on the total bending of $P_\mu(\alpha)$ where $\alpha$ is an open geodesic segment in $\Hp$ of
length $L$.  (This generalizes the notion of roundness introduced by Epstein-Marden-Markovic \cite{EMM1}.)
Our first bound improves on an earlier bounds of Bridgeman \cite{bridgeman,bridgeman2} on roundness.

\medskip\noindent
{\bf Theorem \ref{bendboundbetter}.} {\em 
If $L \in (0, 2 \sinh^{-1}(1))$, $\mu$ is a measured lamination on $\Hp$ and $P_\mu$ is an embedding,
then
$$||\mu||_L\le 2 \cos^{-1}\left(-\sinh\left(\frac{L}{2}\right)\right).$$
}

\medskip

We then generalize  work of Epstein-Marden-Markovic \cite[Theorem 4.2, part 2]{EMM1} and an unpublished result of
Epstein and Jerrard  \cite{EJpre} which give criteria  for
$P_\mu$ to be an embedding.

\medskip\noindent
{\bf Theorem \ref{ej}.} {\em There exists an increasing function  $G:(0,\infty) \rightarrow (0,\pi)$ 
with $G(1) \approx 0.948$, such that if
$\mu$ is a measured lamination on $\Hp$ such that 
$$||\mu||_L < G(L),$$ 
then $P_\mu$ is a bilipschitz embedding
which extends continuously to  a map
\hbox{$\hat P_\mu:\Hp\cup \Sp^1\to \Ht\cup\rs$} so that  $\hat P_\mu(\Sp^1)$ is a quasi-circle.
}

\medskip

With these bounds in place, we may adapt the techniques of Epstein, Marden and Markovic \cite{EMM1,EMM2}
to complete the proof of our main result.

\section{Pleated planes and $L$-roundness}

In this section, we recall the definition of the pleated plane associated to a measured lamination, and introduce
the notion of $L$-roundness.

Let $G(\Hp)$ be the set of unoriented geodesics on the hyperbolic plane $\Hp$.
One may identify $G(\Hp)$ with $(\Sp^1\times\Sp^1-\Delta)/\Z_2$.
A {\em geodesic lamination} on $\Hp$ is a closed subset $\lambda \subset G(\Hp)$
which does not contain any intersecting geodesics. 
A {\em measured lamination} $\mu$ on $\Hp$ is a non-negative measure $\mu$ on $G(\Hp)$ supported on a geodesic lamination 
$\lambda = \text{supp}(\mu)$. A geodesic arc $\alpha$ in $\Hp$ is said to be transverse 
to $\mu$, if it is transverse to every geodesic
in the support of $\mu$.
If $\alpha$ is transverse to $\mu$, we define
$$i(\mu,\alpha) = \mu \left( \{ \gamma \in G(\Hp) \mid \gamma \cap \alpha\ne\emptyset\} \right).$$ 
If $\alpha$ is not transverse to $\mu$, then it is contained in a geodesic in $\text{supp}(\mu)$ and we let $i(\mu,\alpha)=0$.

Given a measured lamination $\mu$ on $\Hp$, we may define a pleated plane \hbox{$P_\mu:\Hp\to\Ht$},
well-defined up to post-composition by an
isometry of $\Ht$. $P_\mu$ is an isometry on the components
of $\Hp-\text{supp}(\mu)$, which are called flats. 
 If $\mu$ is a finite-leaved lamination, then $P_\mu$ is simply obtained by bending, consistently rightward, by the angle $\mu(l)$
along each leaf $l$ of $\mu$. Since any measured lamination is a limit of finite-leaved laminations, one may define $P_\mu$ in
general by taking limits (see \cite[Theorem 3.11.9]{EM87}).

If $\Omega\subset\rs$ is a simply connected hyperbolic domain, let $\chb$ denote the 
boundary of the convex hull of its complement $\rs-\Omega$. 
Thurston \cite{ThBook}  showed that there exists a lamination $\mu$ on $\Hp$ such that
$\chb=P_\mu(\Hp)$ and  $P_\mu:\Hp\to\chb$ is an isometry. (See
Epstein-Marden \cite[Chapter 1]{EM87},
 especially sections 1.11 and 1.12,  for a detailed exposition.)

\begin{lemma}
\label{pleated plane and dome}
If $\Omega$ is a hyperbolic domain, there is a lamination $\mu$ on $\Hp$ such that
$P_{\mu}$ is a locally isometric covering
map with image $\chb$.
\end{lemma}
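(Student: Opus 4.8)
The plan is to promote Thurston's theorem (recorded just above the statement) to the non\nobreakdash-simply\nobreakdash-connected setting by passing to the universal cover of $\chb$. Since $\Omega$ is a domain it is connected, and hence so is $\chb$; indeed the nearest point retraction $r\colon\Omega\to\chb$ is a homeomorphism for any hyperbolic domain (Epstein--Marden \cite[Chapter 1]{EM87}). Moreover it is classical that, when $\chb$ is given its intrinsic path metric $d_{\chb}$, the pair $(\chb,d_{\chb})$ is a complete hyperbolic surface and the inclusion $\iota\colon\chb\hookrightarrow\Ht$ is a pleated map: it restricts to a totally geodesic isometric embedding on each component of the complement of a geodesic lamination of $(\chb,d_{\chb})$, and this lamination carries a transverse bending measure $\beta$ (see \cite[Sections 1.11--1.12]{EM87} and \cite{ThBook}).

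First I would pass to the universal cover. Let $p\colon\widehat{\chb}\to\chb$ be the universal covering and give $\widehat{\chb}$ the pulled back metric $p^{*}d_{\chb}$; then $\widehat{\chb}$ is a simply connected complete hyperbolic surface, so there is an isometry $h\colon\Hp\to\widehat{\chb}$. Put $q=p\circ h\colon\Hp\to\chb$, which is a locally isometric covering map, and let $\mu=q^{*}\beta$ be the measured lamination on $\Hp$ obtained by pulling back the bending measure; this is well defined because $q$ is a local isometry carrying geodesics to geodesics and preserving non-crossing, and the flats of $\mu$ are precisely the components of the $q$-preimages of the components of $\chb-\mathrm{supp}(\beta)$.

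Next I would identify the map $\iota\circ q\colon\Hp\to\Ht$ with the pleated plane $P_{\mu}$. By construction $\iota\circ q$ is an isometry onto a totally geodesic subset of $\Ht$ on each flat of $\Hp-\mathrm{supp}(\mu)$, and across each leaf of $\mu$ it bends, consistently rightward, by exactly the angle prescribed by $\mu$. For a finite-leaved lamination this is immediate from the definition of $P_{\mu}$ as iterated bending together with the local description of the bending of $\chb$; the general case follows by approximating $\mu$ by finite-leaved laminations and passing to the limit, exactly as in the construction of $P_{\mu}$ in \cite[Theorem 3.11.9]{EM87}. Since $P_{\mu}$ is determined up to post-composition with an isometry of $\Ht$ by being totally geodesic on flats and realizing the bending $\mu$, we may take $P_{\mu}=\iota\circ q$. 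Then $P_{\mu}$ has image $\chb$, and, viewed as a map onto $\chb$, it coincides with $q$, hence is a locally isometric covering map, which is the assertion.

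The step I expect to be the main obstacle is this last identification: checking that the abstract ``limit of finite-leaved bendings'' defining $P_{\mu}$ genuinely reproduces the extrinsic geometry of $\chb$ along a general (non-finite-leaved) bending lamination, so that $P_{\mu}$ and $\iota\circ q$ agree. Granting the finite-leaved case this is a routine limiting argument, but it is the only point needing real care; the completeness and hyperbolicity of $(\chb,d_{\chb})$ and the existence of the bending lamination $\beta$ are standard facts that I would simply quote.
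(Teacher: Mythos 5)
The paper gives no proof of this lemma: it is stated immediately after the citation of Thurston and of Epstein--Marden \cite[Sections 1.11--1.12]{EM87}, and is treated as a known consequence of that material, not as a new result. Your argument is a reasonable reconstruction of what those sources actually prove, and the structure is the standard one: equip $\chb$ with its intrinsic path metric and use that it is a complete hyperbolic surface equipped with a bending lamination $\beta$; uniformize its universal cover by $\Hp$; pull $\beta$ back along the covering map $q$ to get $\mu$; and then identify $\iota\circ q$ with $P_\mu$, first for finite-leaved laminations and then in general by approximation, exactly as in \cite[Theorem 3.11.9]{EM87}. So you are not taking a different route from the paper --- you are filling in the route the paper delegates to the references.

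One small caution: the claim that the nearest-point retraction $r\colon\Omega\to\chb$ is a homeomorphism for \emph{every} hyperbolic domain is stronger than what you need (you only use it for connectedness of $\chb$), and it is not quite uniform across degenerate cases, e.g.\ when $\partial\Omega$ lies on a round circle and $\chb$ collapses to a totally geodesic piece. Connectedness of $\chb$ follows more robustly from the fact that $r$ is a continuous surjection from the connected set $\Omega$, without invoking injectivity. Since that is all the argument requires, this does not affect the correctness of your proof, but it is worth stating only the weaker fact you actually use.
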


For any point $p \in \chb$, a {\em support plane} at $p$ is a totally geodesic plane through $p$ which is disjoint from the interior
of the convex hull of $\rs-\Omega$.
The exterior angle, denoted  $\angle(P,Q)$,
between two intersecting support planes $P$ and $Q$ is the angle between their normal vectors at a point of intersection. 

Let  $\alpha:[a,b]\to\Hp$ be a unit-speed closed geodesic arc.
If $\alpha(t)$ lies on a leaf $l$ of $\mu$ with
$\mu(l)>0$, then there is a maximal family  $\{Q_l^\theta\}_{\theta\in [0,\mu(l)]}$ of support planes to $\chb$
through $P_\mu(\alpha(t))$, all of which contain $P_\mu(l)$. In all other cases, $\chb$ has  a unique support plane at $P_\mu(\alpha(t))$.
One may concatenate all the support planes to points in $P_\mu(\alpha([a,b]))$ to obtain 
a continuous family $\{ P_t\}_{t\in [0,k]}$ of support planes along $\alpha$, so that
$P_0$ is the leftmost support plane to $\chb$ at $P_\mu(\alpha(a))$ and
$P_k$ is the rightmost support plane to $\chb$ at $P_\mu(\alpha(b))$.
Moreover, there exists a continuous non-decreasing
function $q:[0,k]\to [a,b]$ so that  $P_t$ is a support plane to $\chb$ at $P_\mu(\alpha(q(t)))$ for all $t$.
If $0=t_0<t_1<\cdots<t_n=k$
and $P_t$ intersects both $P_{t_{i-1}}$ and $P_{t_i}$ for all $t\in[t_{i-1},t_i]$, then
$$i(\mu,\alpha) \leq \sum_{i = 1}^n \angle(P_{t_{i-1}}, P_{t_i}).$$
See Section 4 of \cite{BC03}, especially Lemma 4.1, for a more careful discussion.

For a measured lamination $\mu$ on $\Hp$, Epstein, Marden and Markovic \cite{EMM1} defined the 
{\em roundness}  of $\mu$ to be
$$||\mu|| = \sup i(\mu,\alpha)$$
where the supremum is taken over all  open unit length geodesic arcs  in $\Hp$. The roundness bounds
the total  bending of $P_\mu$ on any segment of length 1 and 
is closely related to average bending, which was  introduced earlier by the first author in \cite{bridgeman}.
In this paper, it will be useful to consider the {\em $L$-roundness} of a measured lamination for any $L>0$
$$||\mu||_L = \sup i(\alpha,\mu)$$
where  now the supremum is taken over all open geodesic arcs of length $L$ in $\Hp$.  
We note that the supremum over open geodesic arcs of length $L$, 
is the same as that over half open geodesic arcs of length $L$.

In \cite{bridgeman2}, the first author obtained an upper bound on the  $L$-roundness of  an embedded pleated plane. 

\begin{theorem}{\rm (Bridgeman \cite{bridgeman2})}
There exists a strictly increasing homeomorphism \hbox{$F:[0,2\sinh^{-1}(1)]\rightarrow [\pi,2\pi]$}
such that if $\mu$ is a measured lamination on $\Hp$ and $P_\mu$ is an embedding,  then  
$$||\mu||_L \le F(L)$$
 for all $L\le 2\sinh^{-1}(1)$. In particular, 
$$||\mu||\le F(1)=2\pi-2\sin^{-1}\left(\frac{1}{\cosh(1)}\right)\approx 4.8731.$$
\label{bendbound}
\end{theorem}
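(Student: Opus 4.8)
The plan is to bound $||\mu||_L$ by comparing a general embedded pleated plane with an explicit extremal configuration; the same comparison, carried out more carefully, yields the improved estimate $2\cos^{-1}(-\sinh(L/2))$ of Theorem~\ref{bendboundbetter}.

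First I would reduce to the case that $\mu$ is finite‑leaved. A measured lamination is a weak‑$*$ limit of finite‑leaved ones $\mu_j$, and $i(\mu_j,\alpha)\to i(\mu,\alpha)$ for any fixed transverse arc $\alpha$; the one subtlety is that ``$P_\mu$ is an embedding'' is not a closed condition, so the approximation must be set up so that it is the inequality $||\mu_j||_L\le F(L)$, not the embeddedness, that survives passage to the limit — for instance by approximating an embedded $P_\mu$ by the embedded finite‑leaved pleated planes obtained from truncations of $\operatorname{supp}(\mu)$, using local rigidity of the bending construction. For finite‑leaved $\mu$ and a geodesic arc $\alpha\colon[0,L]\to\Hp$, the image $P_\mu(\alpha)$ is a piecewise geodesic meeting leaves $l_1,\dots,l_n$ with weights $\theta_i=\mu(l_i)<\pi$; writing $\bar F_0,\dots,\bar F_n$ for the totally geodesic planes carrying the flats crossed by $\alpha$, with $m_i=\bar F_{i-1}\cap\bar F_i=P_\mu(l_i)$, one has $i(\mu,\alpha)=\sum_i\theta_i=:\beta$, equal also to the total rotation of the support‑plane family $\{P_t\}$ along $\alpha$.

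The key point is that the flats are \emph{infinite} totally geodesic pieces, so the requirement that their images be pairwise disjoint away from the shared leaves $m_i$ is vastly more restrictive than convexity of the curve $P_\mu(\alpha)$ — this is exactly why $\beta$ must be bounded once $L$ is, even though a curve of prescribed geodesic curvature can turn arbitrarily much over arbitrarily short length. I would make this quantitative first in the ``orthogonal'' case, when $\alpha$ meets every leaf at a right angle: then the geodesics $m_i$ have a common perpendicular, hence lie in a single totally geodesic plane $\Pi$ orthogonal to all the $\bar F_i$, the pleated plane is carried isometrically onto a convex polygonal arc in $\Pi$ with two infinite geodesic rays (the traces of the extreme flats $F_0$, $F_n$) attached at its ends, and $P_\mu$ is an embedding precisely when that planar arc is simple. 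A general arc reduces to the orthogonal case because, for fixed leaves and weights, replacing $\alpha$ by the common perpendiculars of consecutive leaves decreases its length while preserving $i(\mu,\alpha)$, so the orthogonal configuration is the worst one. In the planar model one then extremizes: putting turning in the interior of the arc both shortens the span of its central part and tilts the two end rays toward one another, so the worst case concentrates all turning at the two ends, and the condition reduces to two geodesic rays issuing from the endpoints of a segment of length $L$, which meet iff $\cosh L<(1+\cos\theta_1\cos\theta_2)/(\sin\theta_1\sin\theta_2)$; minimizing the right side over $\theta_1+\theta_2=\beta$ (the minimum is at $\theta_1=\theta_2$) gives the sharp planar bound $\sin(\beta/2)\ge\sech(L/2)$. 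From the resulting explicit formula one reads off a strictly increasing bounding homeomorphism $F$ with $F(0)=\pi$ (for $L\to 0$ a single fold is forced, so $\beta<\pi$) and $F\to 2\pi$ as the configuration just closes up, and evaluates at $L=1$ to obtain $F(1)\approx 4.8731$. The stronger bound of Theorem~\ref{bendboundbetter} comes from running the same collision analysis directly with the support planes $\{P_t\}$ instead of passing to $\Pi$, which controls the relevant separation through $\sinh(L/2)$ and yields $\cos(\beta/2)\ge-\sinh(L/2)$, meaningful exactly for $L<2\sinh^{-1}(1)$.

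The main obstacle I anticipate is the reduction of a genuinely three‑dimensional embedded pleated plane to this planar/symmetric model: ``where two infinite flats first collide'' depends on all of the bending data at once, so one cannot localize. I would handle it by tracking, along the support‑plane family, the first moment at which two support half‑spaces are forced to overlap as a function of the accumulated turning, and by a balancing argument — redistributing bending toward the endpoints and sliding leaves together — that shows the symmetric two‑fold configuration is extremal. A secondary technical point is to arrange the initial finite‑leaved approximation so that, as noted, it is the inequality rather than the embeddedness that passes to the limit.
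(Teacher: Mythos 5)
This theorem is quoted from Bridgeman's earlier paper \cite{bridgeman2} and is \emph{not} proved in the present text; the most closely related argument in the paper is the proof of the refinement, Theorem~\ref{bendboundbetter}, so I will compare against that. The paper's proof works directly with the one-parameter family of support planes $\{P_t\}$: it invokes the results of \cite{BC03} (in particular that $P_0$ and $P_k$ must be disjoint, and Lemma~4.1 bounding $i(\mu,\alpha)\le 2\pi-2\theta$ for a support plane $P_b$ meeting $P_0$ and $P_k$ at equal interior angle $\theta$), then projects to the plane $R$ perpendicular to $P_0,P_b,P_k$ and uses the isosceles triangle with an ideal vertex to bound $\theta$ from below. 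Your proposal — reduce to finite-leaved laminations, pass to an ``orthogonal'' planar configuration, and then argue that the extremal case has all bending concentrated at the two ends — is a genuinely different strategy.

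There are, however, real gaps. First, the reduction to the orthogonal case is not well-defined: ``replacing $\alpha$ by the common perpendiculars of consecutive leaves'' does not produce a single geodesic arc, since the common perpendiculars of $\ell_i,\ell_{i+1}$ and of $\ell_{i+1},\ell_{i+2}$ need not be collinear; and if instead you deform the lamination so that some arc meets all leaves orthogonally, you change the leaves and lose control of whether $P_\mu$ is still embedded. Second, the assertion that the extremal configuration concentrates all turning at the two endpoints of the arc is exactly the hard step and is offered without proof. Third, and most concretely, the formula you derive, $\sin(\beta/2)\ge\operatorname{sech}(L/2)$, does \emph{not} evaluate to $F(1)\approx 4.8731$ as you claim. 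It gives $\beta\le 2\pi-2\sin^{-1}(\operatorname{sech}(1/2))\approx 4.113$, which is strictly smaller than both $F(1)\approx 4.874$ and the paper's improved bound $c_1(1)=2\cos^{-1}(-\sinh(1/2))\approx 4.238$. Indeed $\tanh(L/2)<\sinh(L/2)$ for $L>0$, so your planar bound would strictly dominate $c_1(L)$ for all $L$, i.e.\ it would supersede Theorem~\ref{bendboundbetter}, the main technical point of Section~3. That is strong evidence that the planar two-fold model you extremize over is too restrictive to be the true worst case for an embedded pleated plane in $\mathbb{H}^3$ (where flats are half-planes, not full geodesics in a single $\mathbb{H}^2$, and may avoid one another even when their planar traces would collide). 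The numerical mismatch between the formula you derive and the value you quote needs to be resolved before the argument can be considered sound.
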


Epstein, Marden and Markovic \cite{EMM1} provided a criterion guaranteeing that a pleated plane
is a bilipschitz embedding.

\begin{theorem}{\rm (\EMM\ \cite[Theorem 4.2, part 2]{EMM1})}
If $\mu$ is a measured lamination on $\Hp$ such that $||\mu|| \leq c_2=73$,
then   $P_\mu$ is a bilipschitz embedding which extends to an embedding $\hat P_\mu:\Hp\cup \Sp^1\to \Ht\cup\rs$ 
such that  $\hat P_\mu(\Sp^1)$ is a quasi-circle. 
\label{sufficient}
\end{theorem}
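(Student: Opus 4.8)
The plan is to follow the strategy of \EMnM. First I would reduce to the case where $\mu$ is finite-leaved: any $\mu$ with $||\mu||\le c_2$ is a limit of finite-leaved laminations $\mu_n$ with $||\mu_n||\le||\mu||$ for which $P_{\mu_n}\to P_\mu$ uniformly on compact sets, and both ``$P_\mu$ is $K$-bilipschitz'' and ``the boundary extension is $C$-quasisymmetric'' are closed under such limits once $K$ and $C$ are held fixed; so it is enough to produce $K=K(c_2)$ and $C=C(c_2)$ when $\mu$ has finitely many leaves. In that case $P_\mu$ is a finite composition of bends, so for a unit-speed geodesic arc $\alpha:[0,L]\to\Hp$ the image $P_\mu\circ\alpha$ is a genuine piecewise-geodesic path in $\Ht$, and the exterior bending of $P_\mu\circ\alpha$ along any subarc $\beta\subset\alpha$ is at most $i(\mu,\beta)$. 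Splitting a length-$L$ arc into $\lceil L\rceil$ subarcs of length at most one gives $i(\mu,\alpha)\le\lceil L\rceil\,||\mu||$, so the bending of $P_\mu\circ\alpha$ over any subarc of length at most one is at most $c_2$.

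The heart of the matter, and the step that pins down the numerical value of $c_2$, is the claim that a piecewise-geodesic path in $\Ht$ whose total exterior bending over every unit subarc is at most $c_2$ is a $(\lambda,\varepsilon)$-quasigeodesic with $\lambda,\varepsilon$ depending only on $c_2$. I would prove this by a hyperbolic-trigonometric comparison along $P_\mu\circ\alpha$: choose a support plane of the pleated surface at $P_\mu(\alpha(0))$ and transport it along the path, subdividing finely enough that consecutive support planes meet at a small exterior angle, and then show that the corresponding sequence of support half-spaces of $\Ht$ advances a definite amount at each step, so that $d_{\Ht}(P_\mu(\alpha(0)),P_\mu(\alpha(L)))\ge L/\lambda-\varepsilon$; the reverse inequality $d_{\Ht}(P_\mu(\alpha(0)),P_\mu(\alpha(L)))\le L$ is automatic since $P_\mu$ is $1$-Lipschitz. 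The constant $c_2$ is exactly the threshold below which the ``advance'' of these half-spaces dominates their accumulated rotation so that the estimate closes up, and extracting it is where the explicit computation of \cite[\S4]{EMM1} enters.

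Granting the quasigeodesic estimate, the remainder is formal. By stability of quasigeodesics in the negatively curved space $\Ht$, each $P_\mu\circ\alpha$ stays within bounded Hausdorff distance of the geodesic segment joining its endpoints; this makes $P_\mu$ a quasi-isometric embedding, and combining the large-scale bound with the fact that $P_\mu$ is nearly an isometry on any small ball (where the total bending is small, again by the roundness bound) upgrades it to a global $K(c_2)$-bilipschitz embedding, in particular an injection. A quasi-isometric embedding of $\Hp$ into $\Ht$ extends continuously to a topological embedding $\hat P_\mu:\Hp\cup\Sp^1\to\Ht\cup\rs$ carrying $\Sp^1$ onto a Jordan curve, and the fellow-traveling of the paths $P_\mu\circ\alpha$ with geodesics of $\Ht$ translates, via Ahlfors' three-point characterization, into bounded turning of $\hat P_\mu(\Sp^1)$ with constant depending only on $c_2$; that is, $\hat P_\mu(\Sp^1)$ is a quasicircle. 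I expect the main obstacle to be the quantitative trigonometric lemma of the second paragraph and the explicit extraction of $c_2$ from it.
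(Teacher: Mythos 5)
Note first that the paper does not reprove this statement: it is quoted directly from \EMnM (with the slip $c_2=73$ for $c_2=0.73$), and what the paper actually establishes is the stronger Theorem \ref{ej} via Lemma \ref{hill}, so that is the relevant comparison. Your architecture --- reduce by approximation to finite-leaved $\mu$, regard $P_\mu\circ\alpha$ as a piecewise geodesic whose bending over each unit subarc is at most $c_2$, prove a uniform quasigeodesic/bilipschitz estimate depending only on $c_2$, and deduce the quasicircle at infinity from fellow-traveling --- matches the framework of the paper and of \EMnM, and the boundary-extension step at the end is standard.

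The genuine gap is exactly where you flag it: the quantitative lemma of your second paragraph. The suggestion to transport support planes and show that ``support half-spaces advance a definite amount at each step'' is not a proof and gives no hint of how a threshold near $0.73$ emerges; the planes both rotate and slide as one moves along the path, and without a concrete bookkeeping device it is unclear that advance dominates rotation. The device the paper uses (after Epstein and Jerrard) is the hill function $h(x)=\cos^{-1}(\tanh x)$, engineered so that, via the differential relation $\theta'=-\sin\theta/\tanh s<-\sin\theta$ from \cite[Lemma 4.4]{EMM1}, the point $(g(t),\theta(t))$ on the graph of $h$ determined by $h(g(t))=\theta(t)$ slides rightward at horizontal speed $g'(t)>1$ between bending points and jumps vertically by at most the bending angle at each one; Lemma \ref{hill} then plays the total jump against the guaranteed slide to keep $\theta$ bounded away from $\pi$. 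Your sketch omits all of this. A second, smaller, error: the claim that $P_\mu$ is ``nearly an isometry on any small ball (where the total bending is small, again by the roundness bound)'' is false --- a single leaf can carry measure close to $c_2$, so the bending on a ball of radius $r$ does not tend to $0$ as $r\to 0$. The paper instead obtains the lower Lipschitz bound by unrolling the piecewise geodesic into $\Hp$ (Lemma \ref{planar variation}), showing the unrolled curve is proper with monotone angular coordinate, and orthogonally projecting onto a nearby geodesic (Proposition \ref{quasigeodesic}); some argument of that kind, rather than a near-isometry on small balls, is what is needed.
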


In  \cite{EMM2}, Epstein, Marden and Markovic comment 
``Unpublished work by David Epstein and Dick Jerrard should prove that $c_2 > .948$, 
though detailed proofs have not yet been written''. 
The authors contacted David Epstein who kindly provided their notes outlining the proof.
In section \ref{embedding} we prove a generalization of their result 
using the approach outlined in their notes.

\section{An upper bound on $L$-roundness for embedded pleated planes}

In this section, we adapt the
techniques of  \cite{bridgeman2} to obtain an improved bound on  the \hbox{$L$-roundness}
of an embedded pleated plane.

\begin{theorem}\label{bendboundbetter}
If $L \in (0, 2 \sinh^{-1}(1))$, $\mu$ is a measured lamination on $\Hp$ and $P_\mu$ is an embedding,
then
$$||\mu||_L\le c_1(L)=2 \cos^{-1}\left(-\sinh\left(\frac{L}{2}\right)\right).$$
\end{theorem}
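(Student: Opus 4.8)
The plan is to bound $i(\mu,\alpha)$ for an arbitrary open geodesic arc $\alpha$ of length $L < 2\sinh^{-1}(1)$ by studying the family of support planes $\{P_t\}_{t\in[0,k]}$ along $\alpha$ described in Section~2, and to use the hypothesis that $P_\mu$ is an embedding to control how much these planes can turn. Following the strategy of \cite{bridgeman2}, I would first reduce to the case where $\mu$ is finite-leaved, so that $P_\mu(\alpha)$ is a piecewise-geodesic path in $\Ht$ bending at finitely many points $P_\mu(\alpha(t_1)),\dots,P_\mu(\alpha(t_n))$ by angles $\theta_1,\dots,\theta_n$ with $\sum\theta_j = i(\mu,\alpha)$; the general bound then follows by approximation since $L$-roundness is continuous under the relevant limits. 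The embedding hypothesis is what prevents the total bending from running off to $2\pi$ or beyond: if $P_\mu(\alpha)$ bent too much over too short a hyperbolic distance, the pleated plane would be forced to cross itself, contradicting injectivity.

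The key geometric estimate is the following: consider the first and last support planes $P_0$ and $P_k$ along $\alpha$. Their images under $P_\mu$ are totally geodesic planes in $\Ht$ tangent to $\chb$ at the endpoints of $P_\mu(\alpha)$, which has length exactly $L$. If $P_\mu$ is an embedding, these two planes must be disjoint or tangent — they cannot cross, for a crossing would again violate injectivity of the pleated plane near $\chb$. The heart of the argument is therefore a trigonometric lemma in $\Ht$: if a geodesic segment $\sigma$ of length $L$ has support planes $P$ at one endpoint and $Q$ at the other making exterior angles summing to $\Theta$ along $\sigma$, and $P$ and $Q$ do not cross, then $\Theta \le 2\cos^{-1}(-\sinh(L/2))$. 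One proves this by the worst-case configuration: the total turning $\Theta$ is maximized when all the bending is concentrated symmetrically, i.e. when $\sigma$ bends by $\Theta/2$ at each of its two endpoints (or, in the limiting analysis, when we split $\sigma$ at its midpoint and bend equally there), and then the non-crossing condition for $P$ and $Q$ becomes an explicit inequality relating $L$ and $\Theta$. A direct computation in the upper half-space or hyperboloid model — placing the midpoint of $\sigma$ at a convenient location, writing the two extreme planes as hemispheres, and demanding they be externally tangent — yields exactly $\cosh(L/2)\cdot(\text{something}) \le -\sin(\Theta/2)$ type relation, which rearranges to $\sin(\Theta/2) \le -\sinh(L/2)$... more precisely to $\cos(\Theta/2) \ge \sinh(L/2)$ at the tangency threshold, giving $\Theta/2 \le \cos^{-1}(-\sinh(L/2))$ once one tracks the correct branch, and this is meaningful precisely when $\sinh(L/2) < 1$, i.e. $L < 2\sinh^{-1}(1)$.

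To assemble this into the theorem, I would use the inequality from Section~2 that $i(\mu,\alpha) \le \sum_{i=1}^n \angle(P_{t_{i-1}}, P_{t_i})$ for a sufficiently fine subdivision, together with a convexity/monotonicity argument showing that among all ways of distributing a fixed total exterior angle along a length-$L$ geodesic subject to the first and last planes not crossing, the constraint is tightest — hence the total angle largest — in the symmetric two-bend configuration analyzed above. Concretely, one shows that the "spread" between the first and last support plane is a monotone function of the accumulated bending and of the arc length traversed, so that the global non-crossing constraint $\angle(P_0, P_k) < \pi$ (or the sharper disjointness condition) propagates to the bound on $\sum \theta_j$. I expect the main obstacle to be precisely this last reduction: making rigorous the claim that the extremal configuration is the symmetric one, and handling the possibility that intermediate support planes $P_t$ fail to intersect both neighbors $P_{t_{i-1}}$ and $P_{t_i}$ (which is why the subdivision and the hypothesis $L < 2\sinh^{-1}(1)$ matter — for short enough arcs one can guarantee the needed pairwise intersections). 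The pure trigonometry of two non-crossing planes bounding a length-$L$ geodesic is a finite computation; the care lies in controlling the combinatorics of the support-plane family and justifying the passage to finite-leaved laminations and then to the symmetric worst case.
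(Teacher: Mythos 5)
Your proposal has the right starting point — you correctly identify that the embedding hypothesis forces the first and last support planes $P_0$ and $P_k$ along $\alpha$ to be disjoint, and that the theorem should reduce to a hyperbolic trigonometry estimate — but there are genuine gaps between that observation and the stated bound, and the route you sketch does not close them.

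The central missing piece is the intermediate reduction the paper uses, which comes from the machinery of \cite{BC03}. The paper does \emph{not} try to identify an extremal bending configuration for the full family of support planes. Instead, it exploits continuity of the one-parameter family $\{P_t\}$: since $L<2\sinh^{-1}(1)$, every $P_t$ with $t>a$ intersects $P_a$, and by an intermediate value argument one can find a single support plane $P_b$ making \emph{equal} interior angle $\theta$ with both $P_0$ and $P_k$. Lemma~4.1 of \cite{BC03} (the partition inequality $i(\mu,\alpha)\le\sum\angle(P_{t_{i-1}},P_{t_i})$ quoted in Section~2) then gives $i(\mu,\alpha)\le 2\pi-2\theta$ directly, with no need to locate a worst-case distribution of bending. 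Your proposal instead tries to argue that ``all bending concentrated symmetrically at the endpoints'' is extremal and to bound the total turning by a tangency computation for $P_0$ and $P_k$; this claim is neither proved nor obviously true (concentrating the bending at a single interior point, for example, produces a completely different constraint, $\Theta\le\pi$), and you yourself flag it as the step you cannot make rigorous. This is not a small gap: identifying an extremal configuration among all bending distributions is hard, and the paper's chain of inequalities is precisely designed to avoid it.

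Second, once the estimate $i(\mu,\alpha)\le 2\pi-2\theta$ is in hand, the remaining work is to bound $\theta$ from \emph{below}, not to bound a total turning angle from above. The paper does this by passing to the plane $R$ orthogonal to $P_0,P_b,P_k$; the geodesics $g_0=P_0\cap R$, $g_b=P_b\cap R$, $g_k=P_k\cap R$ form (in the extremal case where $g_0,g_k$ share an ideal endpoint) an isosceles triangle with one ideal vertex, and hyperbolic trigonometry gives $\cos\theta=\sinh(\ell(\beta)/2)\le\sinh(L/2)$, hence $\theta\ge\cos^{-1}(\sinh(L/2))$ and $i(\mu,\alpha)\le 2\pi-2\cos^{-1}(\sinh(L/2))=2\cos^{-1}(-\sinh(L/2))$. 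Your trigonometric sketch conflates these two roles and has a branch error: from ``$\cos(\Theta/2)\ge\sinh(L/2)$'' one gets $\Theta/2\le\cos^{-1}(\sinh(L/2))$, not $\Theta/2\le\cos^{-1}(-\sinh(L/2))$; the correct bound arises from the \emph{complement} $2\pi-2\theta$, not from a direct inequality on $\Theta/2$.

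Finally, the proposed reduction to finite-leaved $\mu$ is both unnecessary and delicate here: the paper's argument works directly with the continuous family of support planes for general $\mu$, and approximating $\mu$ by finite-leaved laminations need not preserve the hypothesis that $P_\mu$ is an embedding, so the ``approximation and continuity of $L$-roundness'' step you invoke would itself need justification.
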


\begin{proof}
Since $F(2\sinh^{-1}(1)) = 2\pi $, Theorem \ref{bendboundbetter} follows from Theorem \ref{bendbound}
when \hbox{$L=2\sinh^{-1}(1).$} Therefore, we may assume that \hbox{$L < 2\sinh^{-1}(1)$}.

Let $\alpha:[0,L] \to \Hp$ be a geodesic arc of length $L < 2\sinh^{-1}(1)$.
Let $\{P_t\ |\ t\in [0,k]\}$ be the continuous one-parameter family of support planes to
$\alpha$ and let  $q:[0,k]\to [0,L]$ be the continuous non-decreasing map such that
$P_t$ is a support plane to $\chb$ at $\alpha(q(t))$ for all $t$.

We now recall  the proof of Lemma 4.3 in \cite{BC03}.
If $P_0$ intersects $P_t$ for all $t\in [0,k)$, then
$i(\alpha,\mu)\le \pi$ and we are done. If not, there exists $a \in (0,k)$ such that $P_a$ 
has an ideal intersection point with $P_0$ and $P_t$ intersects $P_0$ for all $t \in (0,a)$.
If there exists $t\in (a,k]$ so that $P_t$ is disjoint from $P_a$, then Lemma 3.2 in 
\cite{BC03} implies that $\alpha([0,q(t)])$ has length at least $2\sinh^{-1}(1)$, which would
be a contradiction. Therefore, if $t\in (a,k]$, then $P_a$ intersects $P_t$.
One of the key arguments in the proof of \cite[Lemma 4.3]{BC03} gives that $P_0$ must
be disjoint from $P_k$ (since otherwise one could extend $\alpha([0,1])$ to a closed curve by
appending arcs in $P_0\cup P_k$ and then project onto $\chb$ to find a homotopically
non-trivial curve  on $\chb$.)

Let $\phi$ be the interior angle of intersection between $P_a$ and $P_k$.
The interior angle of intersection between $P_t$ and $P_0$ varies continuously from $\pi$ to $0$ as $t$
varies between $0$ and $a$ and achieves the value $0$ only at $a$. 
There exists  $c \in (0,a)$ such that $P_c$ has an ideal intersection with
$P_k$ and $P_t$ intersects $P_k$ for all $t \in (c,a)$ (since otherwise we could again argue
that $i(\mu,\alpha)\le \pi$).
The interior angle of  intersection of $P_t$ with $P_k$ varies from $0$ to $\phi$ as $t$ varies from $c$ to $a$.
Thus, there exists some $b \in (c,a)$ such that $P_b$ intersects $P_0$ and $P_k$ in 
the same interior angle $\theta >0$.
Therefore, by \cite[Lemma 4.1]{BC03}, we have
$$i(\mu,\alpha) \leq  2\pi - 2\theta.$$

Consider the plane $R$ perpendicular to $P_0$, $P_b$ and $P_k$.
Consider the three geodesics $g_s=P_s\cap R$,
where $s = 0$,  $b$ or $k$. Notice that $g_b$ intersects both $g_0$ and $ g_k$ with interior angle $\theta$.
Let $\bar\alpha$ be the orthogonal projection of $\alpha$ to $R$. 
Then $\bar\alpha$ is a curve in $R$ with $\bar\alpha(q(s)) \in g_s$ for $s= 0, b,k$. 
Let $\beta$ be the shortest curve joining a point of $g_0$ to a point on $g_k$ which intersects $g_k$. One
may easily check that $\beta$ consists of two geodesic arcs $\beta_0$ and $\beta_1$ such that $\beta_0$ intersects $g_0$
perpendicularly, $\beta_1$ intersects $g_k$ perpendicularly and $\beta_0$ and $\beta$ make the same
angle with $g_b$ at their common point of intersection.

\begin{figure}[htbp] 
   \centering
   \includegraphics[width=5in]{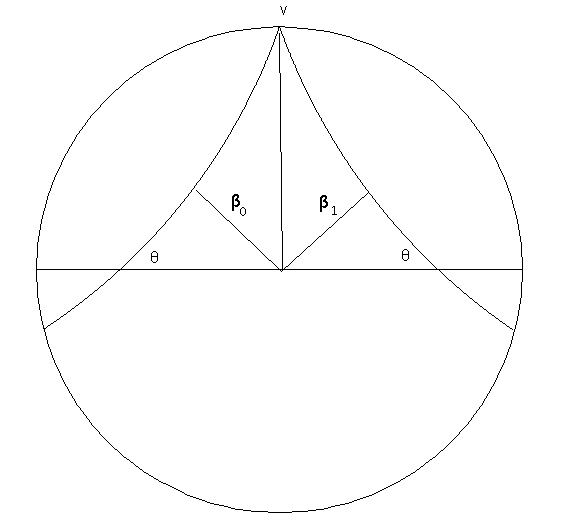} 
   \caption{The triangle $T$ and its decomposition}
   \label{isos1}
\end{figure}

Since $g_0$ and $g_k$ do not intersect, $\beta$ is shortest when the geodesics $g_0$ and $g_k$ 
have a common ideal point. In this case, the geodesics $g_0$, $g_b$ and $ g_k$ form an isosceles triangle $T$ 
with an ideal vertex (see Figure \ref{isos1}).
One may apply hyperbolic trigonometry formulae \cite[Theorem 7.9.1]{beardon} and \cite[Theorem 7.11.2]{beardon}
to check that in this case
$$ \cos(\theta) = \sinh(\ell(\beta)/2).$$
So, in general
$$\ell(\beta) \ge 2\sinh^{-1}(\cos(\theta)).$$
Since, by construction, $\ell(\beta)\le \ell(\alpha)=L$, we see that
$$L \geq 2\sinh^{-1}(\cos(\theta))$$ 
which implies that 
$$\theta \geq \cos^{-1}(\sinh(L/2)).$$
Therefore,
$$i(\mu,\alpha) \leq 2\pi-2\cos^{-1}(\sinh(L/2)) = 2\cos^{-1}(-\sinh(L/2))$$
for any closed geodesic arc $\alpha$ of length $L$. 
Therefore, the same bound holds for all open geodesic arcs of length $L$ and the result follows.
\end{proof}

\section{A new criterion for embeddednes of pleated planes}
\label{embedding}

In this section, we provide a new criterion which guarantees the embeddedness of a pleated plane
which generalizes earlier work of Epstein-Marden-Markovic \cite{EMM1}
(see Theorem \ref{sufficient}) and an unpublished result of
Epstein-Jerrard \cite{EJpre}

\begin{theorem}
There exists an increasing function  $G:(0,\infty) \rightarrow (0,\pi)$, such that if
$\mu$ is a measured lamination on $\Hp$ and
$$||\mu||_L < G(L),$$ 
then $P_\mu$ is a bilipschitz embedding which extends continuously to  a  map
\hbox{$\hat P_\mu:\Hp\cup \Sp^1\to \Ht\cup\rs$}
such that  $\hat P_\mu(\Sp^1)$ is a quasi-circle.
\label{ej}
\end{theorem}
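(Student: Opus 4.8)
The plan is to follow the strategy outlined in the Epstein--Jerrard notes, using the $L$-roundness bound from Theorem \ref{bendboundbetter} to produce, for a suitable choice of $L$ and threshold, a uniform control on how far $P_\mu$ can fail to be locally injective, and then to bootstrap this to a global bilipschitz embedding. Concretely, the first step is to fix a length scale $L>0$ and suppose $||\mu||_L < G(L)$ for a function $G$ to be determined; the hypothesis says that along \emph{every} geodesic arc of length $L$ in $\Hp$ the total bending of $P_\mu$ is strictly less than $G(L) < \pi$. Since the bending along such an arc is bounded away from $\pi$, the image $P_\mu(\alpha)$ of a length-$L$ arc stays within a totally geodesic ``wedge'' of opening angle less than $\pi$, and in particular the support planes along $\alpha$ all pairwise intersect (or share an ideal point); the concatenation estimate $i(\mu,\alpha)\le\sum\angle(P_{t_{i-1}},P_{t_i})$ recalled in Section~2, together with elementary hyperbolic geometry of such wedges, then gives an explicit lower bound on the distance $d_{\Ht}(P_\mu(\alpha(0)),P_\mu(\alpha(L)))$ in terms of $L$ and $G(L)$ — this is the local bilipschitz estimate.

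Next I would upgrade this local estimate to a global one. The standard device (used in \cite{EMM1} and in the Epstein--Jerrard notes) is: given two points $x,y\in\Hp$, join them by the geodesic segment $[x,y]$, subdivide it into arcs of length $L$, and apply the local estimate on each piece; because the bending measure is additive and each piece contributes a definite amount of ``forward progress'' in $\Ht$, one concatenates to get $d_{\Ht}(P_\mu(x),P_\mu(y)) \ge c\, d_{\Hp}(x,y) - C$ for constants depending only on $L$ and $G(L)$, and since $P_\mu$ is $1$-Lipschitz the reverse inequality is automatic. The passage from this coarse (quasi-isometric) estimate to an honest bilipschitz embedding that extends continuously to $\Sp^1$ with quasi-circle image is then a soft argument: a proper quasi-isometric pleated plane in $\Ht$ whose pleating locus has uniformly bounded bending on unit arcs is known to extend to an embedding of $\overline{\Hp}$ with quasi-circle boundary (this is exactly the content of the extension part of Theorem \ref{sufficient}, and one can either invoke it directly once the roundness is controlled, or reprove it by the same Gromov-hyperbolicity / Morse-lemma reasoning). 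Finally, to exhibit $G$ as an \emph{increasing} function $(0,\infty)\to(0,\pi)$, I would check that the explicit lower bound on forward progress coming from a wedge of angle $\beta<\pi$ over a base arc of length $L$ is monotone in the right variables, optimize over the subdivision length, and verify the normalization $G(1)\approx 0.948$ claimed in the introduction, which should match the constant appearing in the Epstein--Jerrard notes; consistency with Theorem \ref{sufficient} (where $||\mu||\le 73$ suffices) is not a constraint here since that theorem allows much larger bending by a different, non-quantitatively-sharp argument.

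The main obstacle I expect is the first step: extracting a \emph{clean, explicit} lower bound on $d_{\Ht}(P_\mu(\alpha(0)),P_\mu(\alpha(L)))$ from the roundness hypothesis, uniformly over all measured laminations and all positions of the arc relative to the pleating locus. The difficulty is that $P_\mu(\alpha)$ is a broken geodesic path whose geometry depends on the (possibly continuous) distribution of the bending measure along $\alpha$, not just its total mass; one must argue that, among all ways of distributing a fixed total bending $\le G(L)$ along an arc of length $L$, the ``worst case'' for forward progress is an extremal configuration amenable to direct trigonometric computation — plausibly all the bending concentrated at finitely many points, reducing to a polygonal path in a totally geodesic plane. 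Making this reduction rigorous (via an approximation of $\mu$ by finite-leaved laminations, as in the definition of $P_\mu$, together with a convexity/monotonicity argument for the distance functional) is the technical heart of the proof; once it is in place, the global bootstrap and the verification of the properties of $G$ are routine.
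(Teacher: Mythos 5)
Your outline correctly anticipates the reduction to finite-leaved laminations / piecewise geodesics and the eventual ``stays within bounded Hausdorff distance of a geodesic, then project'' finish, but the central mechanism of the paper's proof is missing, and the step you propose in its place has a real gap.

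The paper does \emph{not} prove a local bilipschitz estimate on arcs of length $L$ and then subdivide-and-concatenate. That bootstrap does not work on its own: a path that makes definite forward progress on every length-$L$ subarc can still fail to make global progress, because consecutive subarcs may head in nearly opposite directions in $\Ht$. In $\delta$-hyperbolic spaces the local-to-global theorem for quasi-geodesics repairs this only when the local scale $L$ exceeds a threshold depending on the quasi-geodesic constants — which would force $L$ to be large and would not yield the explicit function $G$ with $G(1)\approx 0.948$. Also, your claim that total bending $<\pi$ on a length-$L$ arc forces all the support planes along the arc to pairwise intersect is false (e.g.\ two long segments meeting at angle close to $\pi$ have support planes that can be disjoint), so even the stated local estimate is not established.

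What the paper actually uses is the \emph{hill function} $h(x)=\cos^{-1}(\tanh x)$ and the associated tangent-line construction, which define $c(L)$, $\Theta(L)=h(c(L))$ and $G(L)=h(c(L)-L)-h(c(L))=-Lh'(c(L))$. The key technical result (Lemma~\ref{hill}) tracks the angle $\theta(t)$ between the radial geodesic from $\gamma(0)$ to $\gamma(t)$ and the forward tangent $\gamma'(t)$: along geodesic pieces, $\theta'(t)<-\sin\theta(t)$, so, after the substitution $\theta=h(g)$, the point $(g(t),\theta(t))$ slides down the graph of $h$ with $g'(t)>1$; at a bending point it jumps vertically by at most the bending angle. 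The hypothesis $||\gamma||_L\le G(L)$ is exactly what is needed to show, via a delicate accounting of jumps versus slides on $[s_0,s_0+L]$, that $\theta^+(t)\le\Theta(L)+G(L)<\pi$ for all $t>0$. This single global angular bound replaces your proposed local-to-global bootstrap: it immediately gives embeddedness (Corollary~\ref{embeddedgeodesic}), and, after the planar unfolding of Lemma~\ref{planar variation}, the properness/taxicab argument and orthogonal projection onto a limit geodesic yield the explicit bilipschitz constant of Proposition~\ref{quasigeodesic}. Your ``optimize the wedge angle'' recipe for defining $G$ is therefore not the right one; the monotonicity of $G$ in the paper is proved directly from monotonicity of the auxiliary functions $A$, $B$ in the hill-function construction. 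Finally, invoking Theorem~\ref{sufficient} for the boundary extension is not available without independently establishing $||\mu||\le 73$; the paper instead derives the quasi-circle statement from the uniform bilipschitz bound and the limiting argument over finite-leaved approximants.
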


Since $G(1) \approx 0.948$, we recover the result claimed by Epstein and Jerrard as a special case.

\begin{corollary}{\rm (Epstein-Jerrard \cite{EJpre})}
If $\mu$ is a measured lamination on $\Hp$ such that 
$$||\mu|| < .948$$ 
then $P_\mu$ is a bilipschitz embedding 
which extends continuously to  a map
\hbox{$\hat P_\mu:\Hp\cup \Sp^1\to \Ht\cup\rs$} such that the image of $\Sp^1$ is a quasi-circle.
\end{corollary}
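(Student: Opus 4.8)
The plan is to obtain the corollary as the special case $L = 1$ of Theorem \ref{ej}. The first point to record is that the roundness and the $1$-roundness of a measured lamination coincide: by the definitions in Section 2, $||\mu|| = \sup i(\mu,\alpha)$ and $||\mu||_1 = \sup i(\mu,\alpha)$, where in both cases the supremum is taken over open geodesic arcs of length $1$ in $\Hp$. Hence $||\mu||_1 = ||\mu||$ for every measured lamination $\mu$, and the hypothesis $||\mu|| < .948$ is literally a hypothesis about the $1$-roundness of $\mu$.

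Next I would invoke Theorem \ref{ej}, which produces an increasing function $G:(0,\infty)\to(0,\pi)$ with $G(1)\approx 0.948$. The only thing that needs checking beyond citing Theorem \ref{ej} is that $G(1)$ is in fact at least $0.948$, so that the constant appearing in the corollary is justified: one inspects the explicit formula for $G$ coming out of the proof of Theorem \ref{ej} and verifies numerically that $G(1)\ge 0.948$. Granting this, the hypothesis $||\mu|| < .948$ yields $||\mu||_1 = ||\mu|| < .948 \le G(1)$.

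With $||\mu||_1 < G(1)$ established, Theorem \ref{ej} applies verbatim and gives that $P_\mu$ is a bilipschitz embedding which extends continuously to a map $\hat P_\mu:\Hp\cup\Sp^1\to\Ht\cup\rs$ with $\hat P_\mu(\Sp^1)$ a quasi-circle, which is exactly the conclusion of the corollary. The main (and essentially only) obstacle is the numerical estimate $G(1)\ge 0.948$; once the proof of Theorem \ref{ej} has been carried out and $G$ is written down explicitly, this is a routine evaluation, but it is the step on which the stated constant rests. Everything else is immediate from the definition of roundness together with Theorem \ref{ej}.
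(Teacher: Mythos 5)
Your proposal is correct and is essentially identical to the paper's treatment: the corollary is obtained as the $L=1$ case of Theorem \ref{ej}, using that $||\mu||=||\mu||_1$ by definition and that $G(1)\approx 0.948$. You rightly note that the only substantive point is the numerical verification $G(1)\ge 0.948$, which the paper likewise leaves to the computation of $G$.
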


We begin by finding an embedding criterion for piecewise geodesics. This portion of the proof follows Epstein and Jerrard's
outline quite closely. Such a criterion is easily translated into a criterion for the embeddedness of pleated planes
associated to finite-leaved laminations. We then further show that, in the finite-leaved lamination case, the pleated
planes are in fact quasi-isometric embeddings with uniform bounds on the quasi-isometry constants. The general
case is handled by approximating a general pleated plane by pleated planes associated to finite-leaved laminations.

\medskip\noindent
\begin{remark}
As in \cite[Theorem 4.2]{EMM1} we can  consider a horocycle $C$ in $\Hp$ and a sequence of points on $C$ with  
hyperbolic distance between consecutive points being $L$. Connecting consecutive points, one obtains an
embedded piecewise geodesic $\gamma$ in  $\Ht$. Let $P_\mu(\Hp)$ be the pleated plane in $\Ht$ obtained by
extending each flat in $\gamma$ to a flat in $\Ht$. One may check that 
$$||\mu||_L= 2\sin^{-1}\left(\tanh\left(\frac{L}{2}\right)\right)$$
which is the conjectured optimal bound.
Since $2\sin^{-1}(\tanh(1/2))\approx .96076$, Theorem \ref{ej} is nearly optimal when $L=1$.
Comparing the bounds for all $L \in [0, 2\sinh^{-1}(1)]$, we see they are also close to optimal (see Figure \ref{optimal}).
\end{remark}

\begin{figure}[hpbt] 
\centering
   \includegraphics[width=5in]{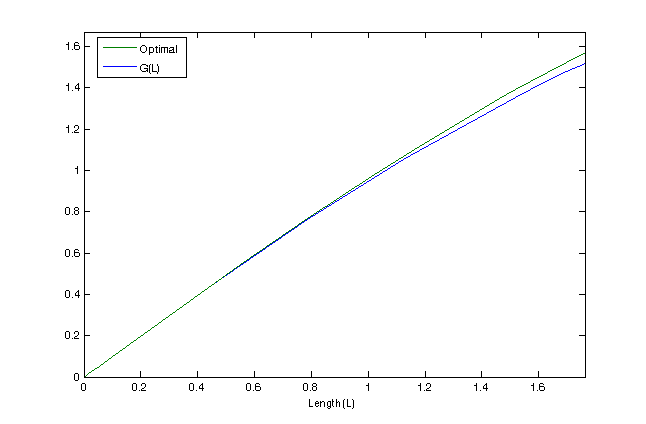} 
   \caption{$G(L)$ and the conjectured optimal bound  $2\sin^{-1}(\tanh(L/2))$ on $[0, 2\sinh^{-1}(1)]$}
   \label{optimal}
\end{figure}

\subsection{Piecewise geodesics}

Let $J$ be an interval in $\mathbb R$ containing $0$.  A continuous map $\gamma:J\to \Ht$ will be called a
``piecewise geodesic'' if there exists a discrete subset $\{t_i\}$ in $J$, parameterized
by an interval in $\Z$, 
such that, for all $i$, $t_i<t_{i+1}$ and $\gamma((t_i,t_{i+1}))$  is a geodesic arc. 
(If  there is a first bending point $t_r$, we let $t_{r-1}=\inf J$ and if there is a last bending point $t_s$, 
we define $t_{s+1}=\sup J$.)
We will
call $t_i$ (or $\gamma(t_i)$) the {\em bending points} of $\gamma$.
The {\em bending angle} $\phi_i$  at  $t_i$ is the angle between $\gamma([t_{i-1},t_i])$ and
$\gamma([t_i,t_{i+1}))$.   Let
$$s(t)=d_{\Ht}(\gamma(0),\gamma(t)).$$

If $L>0$, by analogy with the definition of $L$-roundness, we may define
$||\gamma||_L$  to be the supremum of the total bending angle in any open subsegment of
$\gamma$ of length $L$.

If $t\ne t_i$ for any $i$,  then let 
$\theta(t)$ be the angle between the ray from $\gamma(0)$ to $\gamma(t)$ and the tangent vector
$\gamma'(t)$.
For $i=1,\ldots,n$, we define
$$\gamma_+'(t_i)=\lim_{t\to t_i^+} \gamma'(t)\ \ \ {\rm and}\ \ \
\gamma_-'(t_i)=\lim_{t\to t_i^-} \gamma'(t).$$
We then choose $\theta^\pm(t_i)$ to be the angle between the ray from $\gamma(0)$ to $\gamma(t)$ and the vector
$\gamma_\pm'(t)$.
(Equivalently, we could have defined $\theta^\pm(t_i)$ to be the angle between the ray from $\gamma(0)$ to 
$\gamma(t)$  and the geodesic segment $\gamma([t_i,t_{i\pm 1}))$.)
Notice that  $\theta(t)$ decreases smoothly on $(t_i,t_{i+1})$ for all $i$ and
that 
\begin{equation}
|\theta^+(t_i)-\theta^-(t_i)| \leq \phi_i
\label{dihedral bound}
\end{equation}
for all $i$.

If $t\ne t_i$ for any $i$,  then Lemma 4.4 in Epstein-Marden-Markovic \cite{EMM1} gives that
\begin{equation}
s'(t) = \cos(\theta(t)) \ \ \ {\rm and}\ \ \
\theta'(t) = -\frac{\sin(\theta(t))}{\tanh(s(t))} < -\sin(\theta(t)).
\label{diffeq}
\end{equation}

\subsection{The hill function of Epstein and Jerrard}

A key tool in Epstein and Jerrard's  work  is the following {\em hill function}  
$$h:\R\to (0,\pi) \text{ given by } h(x)= \cos^{-1}(\tanh(x)).$$
The  defining features of the hill function are that
$$h'(x)= -\sech(x) = -\sin(h(x))\ \ \ {\rm and}\ \ \  h(0)=\frac{\pi}{2}.$$
In particular, $h$ is a decreasing homeomorphism. 

For fixed $L>0$, we consider solutions to the equation
$$h'(x) = \frac{h(x) - h(x-L)}{L}.$$
Geometrically, we are finding the point on the graph of $h$ such that the tangent line at $(x,h(x))$
intersects the graph at the point $(x- L,h(x-L))$ (see Figure \ref{hillfig}). 
We will show that there is a unique solution $x=c(L)$ and that $c(L)\in (0,L)$. 

Given $x \in \R$, the tangent line at $(x,h(x))$ to the graph of $h$ intersects the graph in  two points $(x,h(x))$
and $(f(x),h(f(x))$ (except at $x = 0$ where the points are equal). The function $f$ is continuously differentiable and odd.
We define $A(x) = x-f(x)$, so $A$ is also continuously differentiable and odd.  Since $A$ is odd,
to show that $A$ is strictly  increasing, it suffices to show that it is strictly increasing on $[0, \infty)$.  
Suppose that  $0 \leq x_1 < x_2$, and that $T_1$ and $ T_2$ are the tangent lines to $h$ at $x_1$ and $x_2$.
Since $h$ is convex on $[0,\infty)$,  $T_1 \cap T_2=(x_0, y_0)$ lies  below the graph of $h$ and $x_1 < x_0 < x_2$.
Thus $T_2$ intersects the graph of $h$ to the left of the point of intersection of $T_1$ with the graph of $h$. 
Therefore, \hbox{$f(x_2) < f(x_1)\le f(0)=0$} and $f$ is decreasing. It follows that $A(x) = x-f(x)$ is increasing
and that $A(x)>x$ for all $x\in (0,\infty)$.
The function $c$ is the inverse of $A$, so $c$ is also continuous differentiable and strictly increasing. 
Since $A(x) > x$ for $x > 0$, $c(L) \in (0,L)$.

Let
$$\Theta(L) = h(c(L))\ \ \ {\rm and}\ \ \ G(L) = h(c(L)-L) - h(c(L)) = -Lh'(c(L)).$$
To show $G$ is monotonic, we define $B(x) = h(f(x))-h(x)$, the difference of the heights of the intersection points of the
tangent line at $(x,h(x))$ with the graph of $h$. As $h$ and $f$ are both strictly decreasing continuous functions, $B$ is 
strictly  increasing and continuous. Since $G(L) = B(c(L))$, $G$ is a strictly increasing continuous function.

We note that 
$$\Theta(L) + G(L) = h(c(L)-L) < \pi.$$
The following lemma is the key estimate in the proof of Theorem \ref{ej}. 

\begin{lemma}
If $\gamma:[0,\infty) \to \Ht$ is  piecewise geodesic, $L>0$  and 
$$||\gamma||_L\le G(L),$$
then
$$\theta^+(t) \leq \Theta(L) + G(L) <\pi$$
for all $t>0$.
\label{hill}
\end{lemma}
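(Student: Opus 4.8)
The plan is to bound the evolution of $\theta^+(t)$ along the piecewise geodesic by comparing it, on each geodesic subarc, with the hill function $h$, and to control the jumps at bending points using \eqref{dihedral bound} together with the hypothesis $||\gamma||_L \le G(L)$. The idea is that $h$ is precisely the solution to the ODE $h' = -\sin h$, which by \eqref{diffeq} is the equation $\theta(t)$ would satisfy if $\tanh(s(t))$ were replaced by $1$; since $\theta'(t) = -\sin(\theta(t))/\tanh(s(t)) < -\sin(\theta(t))$, the function $\theta$ decays \emph{faster} than a shifted copy of $h$ on each smooth arc. So on $(t_i,t_{i+1})$ we may dominate $\theta$ by the hill function: if $\theta^+(t_i) = h(a_i)$ for some $a_i$ (using that $h:\R\to(0,\pi)$ is a decreasing homeomorphism, so this defines $a_i$ uniquely), then $\theta(t) \le h(a_i + (t-t_i))$ for $t \in [t_i, t_{i+1})$, because both start at the same value and $\theta$ has the more negative derivative. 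The bending inequality \eqref{dihedral bound} then gives $\theta^+(t_{i+1}) \le \theta^-(t_{i+1}) + \phi_{i+1} \le h(a_i + (t_{i+1}-t_i)) + \phi_{i+1}$.

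The heart of the argument is then a discrete/continuous bookkeeping lemma: track the quantity $\theta^+(t)$ and show it can never exceed $\Theta(L) + G(L)$. First I would dispose of the initial behavior: for small $t$, $\theta(t)$ is close to $\theta^+(0) \le \pi/2 < \Theta(L)+G(L)$ on the first smooth arc, and one builds up bending only gradually. The key observation driving the bound $\Theta(L) = h(c(L))$ is the defining property of $c(L)$: the tangent line to $h$ at $(c(L), h(c(L)))$ meets the graph again at $(c(L)-L, h(c(L)-L))$, and the total drop $h(c(L)-L) - h(c(L)) = G(L)$ over the horizontal distance $L$ equals $-Lh'(c(L))$. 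The significance is that $\Theta(L)$ is exactly the largest value $v$ such that: starting from $h^{-1}(v)$ and flowing backward along $h$ for total horizontal length $L$, the accumulated increase in height (i.e.\ the maximum bending that could have been injected over a window of length $L$ to reach this point) is at most $G(L)$. So I would argue by contradiction: suppose $\theta^+(T) > \Theta(L)+G(L)$ for some $T$. Since $\theta^+(0) \le \pi/2$, pick the first time (or a suitable parameter value/bending point) where $\theta^+$ rises above $\Theta(L)$; call this $t_*$, so $\theta^+(t_*) \ge \Theta(L)$ but $\theta^+ \le \Theta(L)$ just before (modulo care at a jump). Now run the comparison backward over the window $(t_*-L, t_*]$: on each smooth piece, $\theta^-$ at the right end is at least $h(\,\cdot\,)$ evaluated after flowing $h$ forward, so summing the drops and the bending jumps over the window, convexity of $h$ on $[0,\infty)$ (the fact used to prove $A(x) = x-f(x)$ is increasing) forces the total bending in that length-$L$ window to strictly exceed $G(L)$, contradicting $||\gamma||_L \le G(L)$.

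More carefully, the mechanism is: the tangent-line characterization says that if you are at height $\le \Theta(L)$ and you flow along $h$ (the best case) over horizontal length $L$, you rise by at most $G(L)$ — this is exactly convexity of $h$ comparing the chord to the tangent. Since the real $\theta$ flows \emph{below} $h$ (decays faster), the deficit only gets worse: to climb from below $\Theta(L)$ to above $\Theta(L)+G(L)$ within a length-$L$ window you must inject more than $G(L)$ of bending, which is forbidden. The continuity/monotonicity facts established before the lemma — $\Theta(L)+G(L) = h(c(L)-L) < \pi$, $c(L) \in (0,L)$, and that $c, \Theta, G$ are continuous — are exactly what is needed to make $h^{-1}$ well-defined on the relevant range and to ensure $t_* - L$ still lies in $[0,\infty)$ or is handled by the initial condition $\theta^+(0)\le\pi/2$ when $t_* < L$.

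\textbf{Main obstacle.} I expect the main difficulty to be the bookkeeping at bending points inside the length-$L$ window — making the backward-flow comparison rigorous when the window $(t_*-L,t_*]$ contains several bending points, and handling the edge case $t_* \le L$ where the window runs off the left end and one must invoke $\theta^+(0) \le \pi/2$ rather than a full window of bending. One must be careful that \eqref{dihedral bound} is an inequality on the \emph{tangent-direction} angle jump, not directly on $\theta$, and that the parametrization map $q$ (or here, simply the arclength $t$) aligns the bending budget correctly; this is the step where Epstein and Jerrard's outline needs to be filled in with genuine estimates rather than just the picture in Figure \ref{hillfig}.
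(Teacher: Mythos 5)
Your high-level strategy matches the paper's: parameterize $\theta^\pm$ along the graph of $h$ via $h(g^\pm(t)) = \theta^\pm(t)$, use $\theta' < -\sin\theta$ to conclude $g'(t) > 1$ on smooth arcs (so the point slides rightward along the hill faster than arclength), and argue by contradiction against the bending budget $||\gamma||_L \le G(L)$. But the contradiction as you set it up has two genuine gaps.

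First, taking $t_*$ to be the \emph{first} time $\theta^+$ rises above $\Theta(L)$ yields nothing: at such a first crossing $\theta^+(t_*)$ may exceed $\Theta(L)$ by an arbitrarily small amount, and there is no reason the bending in a backward window from $t_*$ is large. The paper instead sets $s_0 = \sup\{s\in(0,T]\ :\ \theta^-(s)\le \Theta\}$, the \emph{last} time $\theta^-$ dips to $\Theta$ before the witness $T$ with $\theta^+(T) > \Theta + G$. It then checks $s_0$ is a bending point and that $T-s_0\ge L$ (the case $T-s_0<L$ is an easy direct contradiction), and flows \emph{forward} over the window $[s_0, s_0+L]$ — on which by construction $\theta^->\Theta$ throughout — to show $\theta^-(s_0+L)\le\Theta$, contradicting the choice of $s_0$. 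Your backward window does not align the length-$L$ budget with an interval on which $\theta$ is guaranteed above $\Theta$, and that alignment is the engine of the proof.

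Second, "convexity of $h$ forces the total bending to exceed $G(L)$" is not the actual estimate and hides a necessary case split on $d=\min\{g^+(t):t\in[s_0,s_0+L)\}$. When $d\in[-c,c]$ the estimate is a \emph{slope bound}, not a convexity bound: $|h'|\ge|h'(c)|=G/L$ on $[-c,c]$, together with $g'>1$, gives net downward drift at least $G$ over the window, which cancels the entire jump budget $G$. When $d\in[c-L,-c)$ one uses the tangent line to $h$ at $d$ (not at $c$), namely $h(c-L)\le h(d)+h'(d)(c-L-d)$, to show $g^-(s_0+L)>c$ and hence $\theta^-(s_0+L)\le\Theta$. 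You correctly identify the tangent-line characterization of $c(L)$ and the convexity of $h$ as the reason $\Theta$ and $G$ are defined as they are, but neither sentence in your sketch supplies the two inequalities that carry the argument; the "bookkeeping" you flag as the main obstacle is precisely where the proof lives.
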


\begin{proof}
We define  maps $P^\pm:(0,\infty)\to \R^2$ which are continuous except at the bending
points $\{t_i\}$ and
whose image lies on the graph of $h$. Since $h$ is a homeomorphism onto
$[0,\pi]$, given $t\in (0,\infty)$, we can find a unique $g^\pm(t)\in\R$, such that
$$h(g^\pm(t)) = \theta^\pm(t).$$ 
We then define
$$P^\pm(t) =(P^\pm_1(t),P^\pm_2(t))= (g^\pm(t), h(g^\pm(t)))=(g^\pm(t),\theta^\pm(t)).$$ 
Note that the functions $P^+$ and $P^-$ agree except at the bending points.
In the intervals, we denote the common functions by $P(t)$, $g(t)$, and  $\theta(t)$.

Notice that as one moves along the geodesic ray $\gamma$, the functions
$\theta^\pm(t)$ decrease on each interval $(t_i,t_{i+1})$ and have vertical jump  equal to $\psi_i = \theta^+(t_i)-\theta^-(t_i)$
at  each $t_i$.  By equation \ref{dihedral bound} we have 
$$|\psi_i| = |\theta^+(t_i)-\theta^-(t_i)| \leq \phi_i.$$
Correspondingly, the point $P^\pm(t)$  move along the graph of $h$ by sliding rightward (and downward)
along  $(t_i,t_{i+1})$ and jumping vertically, either upwards or downwards, by $\psi_i$ at $t_i$,
see Figure \ref{hillfig}. 

We  argue by contradiction. Let $c = c(L)$, $G  = G(L)$, and $\Theta = \Theta(L)$.
Suppose there exists $T>0$ so that $\theta^+(T)>\Theta+G$. Let
$$s_0=\sup\{ s\in(0,T]\ |\ \theta^-(s)\le \Theta\}.$$
Notice that if $s_0=T$, then, since $|\theta^+(s_0)-\theta^-(s_0)|<G$,
$$\theta^+(T)\le \theta^-(T)+G\le \Theta+G$$
which would be a contradiction.

Also notice that $s_0=t_i$ for some $i$, since otherwise $\theta^- $ is continuous
and non-increasing at $s_0$, which would contradict the choice of $s_0$.

If $T-s_0<L$, then since $\theta$ can only increase at the bending points and the total
bending in the region $[s_0,T]$ is at most $G$, again
$$\theta^+(T)\le \theta^-(s_0)+G\le \Theta+G$$
which is a contradiction.

So, we may assume that $T-s_0\ge L$. We will use the assumption that
$\theta^-(t)>\Theta$ on \hbox{$(s_0,s_0+L]$} to arrive at a contradiction and
complete the proof of the lemma.

\medskip

We  show that under our hypotheses,
$P(T)$ cannot lie to the left of $(c(L)-L,h(c(L)-L))$.

The key observation in the proof is that 
$$h'(g(t))g'(t) = \theta'(t) < -\sin(\theta(t)) = - \sin(h(g(t))) = h'(g(t))$$
where the middle inequality follows from equation (\ref{diffeq}).
Since $h'(g(t))<0$, we conclude that $g'(t) >1$ for all $t\in(t_i,t_{i+1}).$
Therefore,
\begin{equation}
g(t_{i+1}) -g(t_i) =g^-(t_{i+1})-g^+(t_i) > t_{i+1} - t_i
\label{gmvt}
\end{equation}
for all $i$.

Let $\{s_0 = t_j, t_{j+1},\ldots, t_{j+m}\}$ be the bending points in the interval \hbox{$[s_0,s_0+L)$.}
For convenience, we redefine $t_{j+m+1}=s_0+L$.
Since  $||\gamma||_L\le G$, the total vertical jump in the region $[s_0,s_0+L)$ is at most $G$, i.e.
$$\sum_{i=j}^{j+m}| \theta^+(t_i)-\theta^-(t_i)| \le G,$$

Since $\theta^+$ is non-increasing on each interval $(t_i,t_{i+1})$ and
$\theta^-(s_0)\le\Theta$, it follows that
$$\theta^+(t)\le \Theta+G$$
for all $t \in [s_0,s_0+L)$.

\begin{figure}[htbp] 
   \centering
   \includegraphics[width=5in]{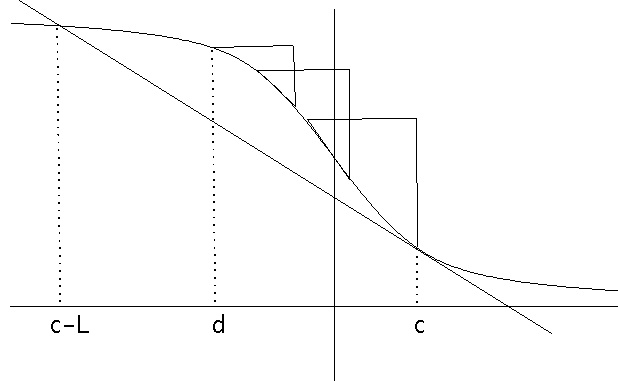} 
   \caption{Jumps and slides on the graph of $h$}
   \label{hillfig}
\end{figure}

Let 
$$d=\min\{ g^+(t)\ | \ t\in [s_0,s_0 +L)\}.$$
Notice that as $g^+$ is non-decreasing on $(t_i,t_{i+1})$
for all $i$, there exists a largest $k\in \{j,\ldots,j+m\}$ so that \hbox{$g^+(t_k)=d$}. 
We further note that $d \in[c-L,c]$
since $\theta^+(t)\in [\Theta, \Theta+G]$
for all \hbox{$t\in [s_0,s_0+L)$}. We break the proof into two cases.

\medskip\noindent
{\bf Case I:  $d \in[-c,c]$:} If $d \in[-c,c]$ then $g^+([s_0,s_0+L])\subseteq [-c,c]$.
Since $\theta^-(t) \geq \Theta$ on \hbox{$(s_0,s_0 + L]$}, we have $g^-((s_0,s_0+L])\subseteq [-c,c]$.
Notice that, since \hbox{$h'(x)=-\sin(h(x))$} and $h$ is decreasing,
if $x\in [-c,c]$, then
$$h'(x)\le h'(c)=-{G\over L}.$$
Therefore, applying (\ref{gmvt}), we see that
$$\theta^-(t_{i+1})-\theta^+(t_{i})\le h'(c)(g^-(t_{i+1})-g^+(t_{i}))=
-{G\over L}(g^-(t_{i+1})-g^+(t_{i})) \le -{G\over L}(t_{i+1}-t_{i})$$ 
for all $i=j,\ldots,j+m$.
Thus,
\begin{eqnarray*}
\theta^-(s_0+L)- \theta^-(s_0)  &=  & \left(\sum_{i=j}^{j+m} \theta^+(t_i)-\theta^-(t_i)\right) +
\left( \sum_{i=j}^{j+m} \theta^-(t_{i+1})-\theta^+(t_{i}) \right) \\
& \le &\left(\sum_{i=j}^{j+m} |\theta^+(t_i)-\theta^-(t_i)|\right)-\left(\sum_{i=1}^{j+m} \frac{G}{L}(t_{i+1}-t_i)\right) \\
& \le &  G - \frac{G}{L}\sum_{i=1}^{j+m} (t_{i+1}-t_i) = 0\ \ \ \ \ \ \ \ \ \ \ \ \ \ \ \ \ \ \ \ \ \ \ \ \ \ \ \ \ \ \ \ \\
\end{eqnarray*}
This implies that  $\theta^{-}(s_0+L) \le \Theta$, which  contradicts the choice of $s_0$.

\medskip\noindent
{\bf Case II: $d \in[c-L,,-c)$:} If $d\in [c-L,-c)$, then
$$|h'(g(t))|\ge |h'(d)|$$
for all $t\in [s_0,s_0+L]$. So,
\begin{equation}
(\theta^+(t_i)-\theta^-(t_{i+1})) \geq |h'(d)|(g^-(t_{i+1})-g^+(t_i)) \geq |h'(d)|(t_{i+1}-t_i)
\label{tmvt}
\end{equation}
for all $i=j,\ldots,j+m.$
It follows that 
$$\sum_{i=i}^{k-1}\left(\theta^+(t_i)-\theta^-(t_{i+1})\right) \ge |h'(d)| (t_k-s_0).$$
Thus, since $\theta^+(t_k)=h(d)$ and $\theta^-(t_j) \leq \Theta$,
$$\sum_{i=j}^{k}\left( \theta^+(t_i)-\theta^-(t_{i})\right)\ge (h(d)-\Theta)+|h'(d)|(t_k-s_0)$$
and so, since the total jump  on the interval $[s_0,s_0+L)$ is at most $G$,
$$\sum_{i=k+1}^{j+m} \theta^+(t_i)-\theta^-(t_i)\le G- (h(d)-\Theta)- |h'(d)|(t_k-s_0)
=h(c-L)-h(d)-|h'(d)|(t_k-s_0).$$

Since $g^+(t_k) = d$,
$$g^-(s_0+L)=d+\left(\sum_{i=k}^{j+m} g^-(t_{i+1})-g^+(t_i)\right) 
-\left(\sum_{i=k+1}^{j+m} g^-(t_i)-g^+(t_i)\right).$$
Applying inequalities  (\ref{gmvt}) and (\ref{tmvt}), we see that
$$g^-(s_0+L)> d+\left(\sum_{i=k}^{j+m}t_{i+1}-t_i\right)-\frac{1}{|h'(d)|}\left(\sum_{i=k+1}^{j+m} \theta^+(t_i)-\theta^-(t_i)\right)$$
$$>d+(s_0+L-t_k)-\frac{1}{|h'(d)|}\left(h(c-L)-h(d)- |h'(d)|(t_k-s_0)\right)$$
$$=d+L-\left({h(c-L)-h(d)\over |h'(d)|}\right).$$

Taking the tangent line at $d$ we note that, since $h'$ is negative and decreasing on the interval $[c-L,d]$, we have
$$h(c-L) \leq h(d) + h'(d)(c-L-d)$$
which implies that
$$ \frac{1}{h'(d)}(h(c-L)- h(d)) \geq  c-L-d.$$
Therefore,
$$g^-(s_0+L) > d+L +\frac{1}{h'(d)}(h(c-L)-h(d)) \geq c,$$
so, $\theta^-(s_0+L) \leq \Theta$ contradicting the definition of $s_0$.
This final contradiction completes the proof.
\end{proof}

As a nearly immediate corollary, we obtain an embeddedness criterion for piecewise geodesics.

\begin{corollary}
\label{embeddedgeodesic}
If $\gamma:[0,\infty)\to \Ht$ is a piecewise geodesic, 
and $||\gamma||_L \leq G(L)$ for some $L>0$, then $\gamma$ is an embedding. 
\end{corollary}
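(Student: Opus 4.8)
The plan is to derive Corollary \ref{embeddedgeodesic} from Lemma \ref{hill} by a simple monotonicity argument on the distance function $s(t)$. The key point is that Lemma \ref{hill} gives $\theta^+(t) \le \Theta(L) + G(L) < \pi$ for all $t>0$, and, by the same lemma applied with base point $\gamma(t_0)$ instead of $\gamma(0)$, one gets the analogous bound for the angle measured from any point $\gamma(t_0)$ to later points $\gamma(t)$ with $t>t_0$. So first I would observe that the hypothesis $||\gamma||_L \le G(L)$ is invariant under restricting $\gamma$ to a subray $[t_0,\infty)$ and re-parametrising so that $t_0$ becomes the new origin; hence for every $t_0 \ge 0$ we have a uniform bound, strictly less than $\pi$, on the angle between the tangent vector $\gamma'(t)$ (for $t>t_0$) and the geodesic ray from $\gamma(t_0)$ to $\gamma(t)$.

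Next I would use the first-order ODE from (\ref{diffeq}), namely $s'(t) = \cos(\theta(t))$ where $s(t) = d_{\Ht}(\gamma(t_0),\gamma(t))$ for the subray based at $t_0$. Since $\theta(t) \le \Theta(L)+G(L) = \pi - \delta$ for some fixed $\delta>0$ at all points $t$ where $\theta$ is defined (and the one-sided values $\theta^\pm(t_i)$ satisfy the same bound), we get $s'(t) = \cos(\theta(t)) \ge \cos(\pi-\delta) = -\cos\delta$. That alone is not quite enough, so the real input is that $\theta(t)$ is strictly decreasing on each smooth interval and can only jump at bending points; combined with the uniform upper bound $\theta < \pi$, this forces $\theta(t)$ eventually to be bounded above by a constant less than $\pi/2$ after a definite amount of arclength, at which point $s'(t) = \cos(\theta(t))$ becomes bounded below by a positive constant. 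More directly: for the subray based at $t_0$ we have $\theta^+(t_0) \le \Theta(L)+G(L)$, but then on the interval immediately following $t_0$ the function $\theta$ decreases, so $s$ is increasing wherever $\theta < \pi/2$; and once $\theta$ drops below $\pi/2$ it stays there (it only increases at bending points, by at most the total bending $G(L) < \pi$, but the decrease from (\ref{diffeq}) dominates over any interval of length $L$ as in the proof of Lemma \ref{hill}).

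Putting this together, I would argue that $s(t) = d_{\Ht}(\gamma(t_0),\gamma(t))$ is eventually strictly increasing in $t$ for $t > t_0$, and in fact $s(t) > 0$ for all $t > t_0$: if $\gamma(t_1) = \gamma(t_0)$ for some $t_1 > t_0$ then $s$ would have to return to $0$, which is impossible once we know $s$ is increasing past the initial stretch — and on that initial stretch $\theta^+(t_0) < \pi$ already gives $s'(t_0^+) = \cos\theta^+(t_0) > -1$ together with the fact that $\theta$ is decreasing, so $s$ cannot immediately vanish either. Hence $\gamma(t_0) \ne \gamma(t)$ for all $t > t_0$. Since $t_0 \ge 0$ was arbitrary, $\gamma$ is injective, i.e. an embedding.

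The main obstacle I anticipate is making the "eventually increasing" step fully rigorous: one needs to rule out the pathological possibility that $\theta(t)$ hovers near $\pi$ forever (so that $s'(t)$ stays negative), or oscillates via many small bends. This is exactly where the quantitative content of Lemma \ref{hill} — that the bound is the specific value $\Theta(L)+G(L)$ rather than just "$<\pi$", and that it holds from every base point — is used: the same jump-and-slide estimate on the graph of $h$ that drove Lemma \ref{hill} shows that over any arc of length $L$ the quantity $\theta$ must strictly decrease unless it is already small, forcing $\theta$ below $\pi/2$ within a bounded amount of arclength and keeping $s$ monotone thereafter. I would expect the write-up to quote Lemma \ref{hill} applied to all subrays, note that $s(t)\to\infty$, and conclude injectivity; the routine verification that $s$ is eventually monotone follows the template of the Case I computation in the proof of Lemma \ref{hill}.
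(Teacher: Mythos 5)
Your strategy overshoots what is needed and in the process introduces a gap. The paper's argument is much shorter and direct: assume $\gamma(0)=\gamma(b)$ (after reparametrizing a general self-intersection to the origin). Since the bending points are discrete, there is a last bending point $t_i<b$, and $\gamma|_{[t_i,b]}$ is a geodesic segment terminating at $\gamma(b)=\gamma(0)$. For $t\in(t_i,b)$, the tangent vector $\gamma'(t)$ therefore points straight back along the geodesic from $\gamma(0)$ to $\gamma(t)$, so $\theta^+(t)=\pi$ exactly, contradicting the strict bound $\theta^+(t)\le\Theta(L)+G(L)<\pi$ from Lemma~\ref{hill}. No monotonicity of $s(t)$ is required.

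The concrete gap in your write-up is the step ``this forces $\theta(t)$ eventually to be bounded above by a constant less than $\pi/2$.'' Lemma~\ref{hill} only gives $\theta^+(t)\le\Theta(L)+G(L)$, and this quantity need not be below $\pi/2$ (for instance, at $L=1$ one has $G(1)\approx 0.948$ and $\Theta(1)=h(c(1))$ is not far below $\pi/2$, so the sum comfortably exceeds $\pi/2$). Your appeal to the ``same jump-and-slide estimate'' to push $\theta$ below $\pi/2$ within bounded arclength is not something the proof of Lemma~\ref{hill} actually establishes; that proof controls excursions above the level $\Theta$ but does not show $\theta$ is eventually trapped below $\pi/2$. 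Proving that $s$ is eventually strictly increasing (equivalently, that $\gamma$ is proper with a definite rate) is genuinely harder and is the content of Proposition~\ref{quasigeodesic}, where it is handled by the planar-unfolding and orthogonal-projection argument, not by the route you sketch. For Corollary~\ref{embeddedgeodesic} alone you should simply note that a self-intersection forces $\theta^+=\pi$ on the incoming segment and apply Lemma~\ref{hill} once.
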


\begin{proof}
Notice that if the corollary fails, then there exists a piecewise geodesic  ray \hbox{$\gamma:[0,\infty)\to\Ht$} such that 
$||\gamma||_L\le G(L)$ and $\gamma(0)=\gamma(b)$
for some $b>0$. (Since if $\gamma(p)=\gamma(q)$ for some $0\le p<q$, we can instead
consider the piecewise geodesic ray $\gamma_1:[0,\infty)\to\Ht$ where $\gamma_1(t)=\gamma(t-p)$.)
There must exist $t_i\in (0,b)$ so that $\gamma$ is geodesic on $[t_i,b]$.
Then, $\theta^+(t) = \pi$ on $(t_i,b)$, contradicting Lemma \ref{hill} above.
\end{proof}

If $\mu$ is a finite-leaved measured lamination on $\Hp$ and $\alpha:[0,\infty)$ is any geodesic ray in $\Ht$,
then $\gamma=P_\mu\circ \alpha$ is a piecewise geodesic and $||\gamma||_L\le ||\mu||_L$. Since any
two points in $\Ht$ can be joined by a geodesic ray, we immediately obtain an embeddedness criterion
for pleated planes.

\begin{corollary}
If $\mu$ is a finite-leaved measured lamination on $\Hp$ and \hbox{$||\mu||_L\le G(L)$} for some $L>0$,
then $P_\mu:\Hp\to\Ht$ is an embedding.
\end{corollary}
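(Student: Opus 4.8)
The plan is to deduce this from Corollary \ref{embeddedgeodesic} by reducing the injectivity of the two-dimensional map $P_\mu$ to the injectivity of its restrictions to geodesic rays. Suppose, for contradiction, that $P_\mu$ is not injective, say $P_\mu(x)=P_\mu(y)$ with $x\neq y$ in $\Hp$. I would take $\alpha:[0,\infty)\to\Hp$ to be the unit-speed geodesic ray with $\alpha(0)=x$ passing through $y$, so $\alpha(b)=y$ for some $b>0$, and set $\gamma=P_\mu\circ\alpha:[0,\infty)\to\Ht$. The goal is then to show that $\gamma$ is a piecewise geodesic ray with $\|\gamma\|_L\leq G(L)$ and $\gamma(0)=\gamma(b)$, which directly contradicts Corollary \ref{embeddedgeodesic}.

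The content is the verification that $\gamma$ is a unit-speed piecewise geodesic with $\|\gamma\|_L\leq\|\mu\|_L$. Since $\mu$ is finite-leaved and distinct leaves of a lamination are disjoint geodesics, $\alpha$ meets $\text{supp}(\mu)$ in only finitely many points, each a transverse crossing of a single leaf (unless $\alpha$ is itself a leaf, in which case $\gamma$ is an honest geodesic and injectivity of $P_\mu$ along $\alpha$ is immediate). Between consecutive crossings $\alpha$ lies in one flat, on which $P_\mu$ is an isometry, so $\gamma$ is geodesic on each complementary interval and its bending points form a discrete set; thus $\gamma$ fits the definition of a piecewise geodesic from the previous subsection, and it is unit-speed because $P_\mu$ is a local isometry. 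By the explicit description of $P_\mu$ for finite-leaved laminations (bend rightward by $\mu(l)$ along each leaf $l$), the bending angle of $\gamma$ at the crossing of a leaf $l$ is exactly $\mu(l)$. Hence, for any open subarc of $\alpha$ of length $L$, the total bending of $\gamma$ over it equals the intersection number of $\mu$ with that subarc, which is at most $\|\mu\|_L\le G(L)$; taking the supremum gives $\|\gamma\|_L\le G(L)$. Corollary \ref{embeddedgeodesic} now forces $\gamma$ to be injective, contradicting $\gamma(0)=P_\mu(x)=P_\mu(y)=\gamma(b)$ with $b>0$.

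I do not expect any serious obstacle: this is essentially a packaging step, and it is the reason the statement is phrased as a corollary. The only points requiring care are the structural ones — using finite-leavedness to obtain discreteness of the bending points, dealing with the degenerate case where $\alpha$ is a leaf of $\mu$, and matching each bending angle with the corresponding leaf weight — together with reading ``embedding'' in the conclusion in the same (injectivity) sense in which Corollary \ref{embeddedgeodesic} is proved.
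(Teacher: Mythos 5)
Your proposal matches the paper's argument: compose $P_\mu$ with a geodesic ray, observe that the result is a piecewise geodesic $\gamma$ with $||\gamma||_L\le||\mu||_L\le G(L)$, and invoke Corollary~\ref{embeddedgeodesic}. One small inaccuracy worth flagging: the bending angle of $\gamma$ at a crossing of a leaf $l$ is \emph{at most} $\mu(l)$ (with equality only when $\alpha$ crosses $l$ orthogonally), not exactly $\mu(l)$, but since only the inequality $||\gamma||_L\le||\mu||_L$ is needed, the conclusion is unaffected.
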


\subsection{Uniformly bilipschitz embeddings}
We next prove that if \hbox{$\gamma:\R\to\Ht$} is a piecewise geodesic and $||\gamma||_L<G(L)$, then
$\gamma$ is uniformly bilipschitz.
We note that since $\gamma$ is 1-Lipschitz, we only have to prove a lower bound.
This will  immediately imply that if $\mu$ is a finite-leaved lamination on $\Hp$ and $||\mu_L||<G(L)$,
then $P_\mu$ is a  $K$-bilipschitz embedding.

\begin{proposition}
If $\gamma: \R \rightarrow \Ht$ is a piecewise geodesic such that 
$$||\gamma||_L  < G(L),$$ 
then $\gamma$ is $K$-bilipschitz where $K$  depends only on $L$ and $||\gamma||_L$.
\label{quasigeodesic}
\end{proposition}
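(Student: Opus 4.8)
The plan is to reduce the statement to the lower bound $d_{\Ht}(\gamma(s),\gamma(t))\ge\tfrac1K|s-t|$ (the upper bound being automatic, as $\gamma$ is $1$-Lipschitz), and, after reparametrising so that $s=0<t$, to analyse $\sigma(t):=d_{\Ht}(\gamma(0),\gamma(t))$ together with the angle functions $\theta^{\pm}$ of the previous subsection, measured from $\gamma(0)$. First I would apply Lemma~\ref{hill} to the ray $\gamma|_{[0,\infty)}$, whose $L$-roundness is at most $\|\gamma\|_L<G(L)$; this gives $\theta^{+}(t)\le\Theta(L)+G(L)<\pi$ for all $t>0$. Since $G$ is continuous, strictly increasing, and tends to $0$ at $0$, and the hypothesis is strict, I can instead choose $L'\in(0,L)$ with $G(L')=\|\gamma\|_L\ge\|\gamma\|_{L'}$ and apply Lemma~\ref{hill} at scale $L'$, obtaining $\theta^{+}(t)\le\Theta^{*}:=\Theta(L')+G(L')<\pi$ with $\Theta^{*}$ depending only on $L$ and $\|\gamma\|_L$ (and the analogous bound for the reversed ray).

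The heart of the argument is to turn this uniform angle bound into linear growth of $\sigma$, and here I would reuse the bookkeeping from the proof of Lemma~\ref{hill}. Set $g(t)=h^{-1}(\theta(t))$, so that $(g(t),\theta(t))$ moves on the graph of $h$. On each smooth arc, combining the first equation of~(\ref{diffeq}) with $h'=-\sin\circ h$ gives $h'(g)\,g'=\theta'<-\sin\theta=h'(g)$, hence $g'>1$; at bending points $g$ jumps, and the total jump of $\theta$ over any window of length $L$ is at most $\|\gamma\|_L<G(L)$. The key identity is
$$\sigma'(t)=\cos\theta(t)=\cos\bigl(h(g(t))\bigr)=\tanh\bigl(g(t)\bigr),$$
so that $\sigma(t)=\int_{0}^{t}\tanh(g(u))\,du$. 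Since $\theta^{+}\le\Theta^{*}$ we have $g\ge h^{-1}(\Theta^{*})$, so the integrand is bounded below by the fixed constant $\tanh(h^{-1}(\Theta^{*}))$, and it is positive precisely where $g>0$, i.e.\ where $\theta<\tfrac{\pi}{2}$. Thus everything reduces to bounding the time $g$ spends in the bad region $\{g\le0\}$: along smooth arcs $g$ escapes this region at speed $>1$, and it can only re-enter by a downward jump, which consumes bending; since in any window of length $L$ the available bending is strictly below $G(L)$, the sojourn time of $g$ in $\{g\le0\}$ over an interval of length $\ell$ should be bounded by $\kappa_1\ell+\kappa_2$ with $\kappa_1<1$ and $\kappa_1,\kappa_2$ depending only on $L$ and $\|\gamma\|_L$. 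This would give $\sigma(t)\ge\kappa t-C$ with $\kappa>0$, hence $\sigma(t)\ge\tfrac{\kappa}{2}t$ once $t\ge2C/\kappa$.

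For the remaining range $0<t\le2C/\kappa$ I would argue directly from~(\ref{diffeq}): since $\theta(0^{+})=0$, $\theta$ jumps upward by at most the (bounded) total bending in $[0,t]$, and the factor $1/\tanh(\sigma)$ in $\theta'$ produces a strong restoring force whenever $\sigma$ is small, so $\theta$ cannot linger above $\tfrac{\pi}{2}$ there; this yields $\sigma(t)\ge c_0t$ for a constant $c_0=c_0(L,\|\gamma\|_L)>0$. Then $K=\max\{2/\kappa,\,1/c_0,\,1\}$ works, and the same $K$ bounds the bilipschitz constant of the finite-leaved pleated planes $P_\mu$ through $\gamma=P_\mu\circ\alpha$ for geodesic lines $\alpha\subset\Hp$, since $\|P_\mu\circ\alpha\|_L\le\|\mu\|_L$ and any two points of $\Hp$ lie on such a line.

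I expect the real difficulty to be the sojourn-time estimate with $\kappa_1<1$, uniformly in $L$. The naive bound — time with $\theta\ge\tfrac{\pi}{2}$ in a window $\le$ (total bending there)$/\sin\Theta^{*}$ — is far too weak once $L$ is large, because $\Theta^{*}$ is then close to $\pi$; instead one has to exploit that near $g=0$ one has $|h'|\approx1$, so a downward jump of $g$ of a given size costs almost that much bending, and then use the strict deficit $G(L)-\|\gamma\|_L>0$ to force the bad excursions of $g$ to be genuinely transient. In other words, this step should again lean on the full "jumps and slides on the graph of $h$" analysis underlying Lemma~\ref{hill}, rather than on any crude angle bound.
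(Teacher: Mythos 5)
You take a genuinely different route from the paper, and while the setup in your first two paragraphs is sound (the scale reduction to $L'$ with $G(L')=\|\gamma\|_L$, the bound $\theta^+\le\Theta^*<\pi$ from Lemma~\ref{hill}, and the identity $s'(t)=\cos\theta(t)=\tanh(g(t))$), the sojourn-time estimate --- which you correctly identify as the crux --- is not established, and the mechanism you propose for it does not close. The intended per-window bound is of the form (sojourn in $\{g\le 0\}$) $\lesssim$ ($\theta$-budget)$/m$, where $m=\min_{[c(L)-L,\,0]}|h'|=\sech(c(L)-L)$ controls how much $\theta$-bending a unit of backward $g$-jump must cost. But $G(L)=-Lh'(c(L))=L\sech(c(L))$, and one can check that $c(L)<L/2$ (the tangent at $(c,h(c))$ passes through $(0,\,h(c)-ch'(c))$ which by strict convexity lies strictly below $(0,\pi/2)$, forcing $f(c)<-c$, i.e.\ $A(c)>2c$). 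Hence $|c(L)|<|c(L)-L|$, so $m=\sech(c(L)-L)<\sech(c(L))=G(L)/L$, i.e.\ $G(L)>Lm$. When $\|\gamma\|_L$ is close to $G(L)$, the per-window sojourn bound $\|\gamma\|_L/m$ therefore exceeds $L$, and no $\kappa_1<1$ comes out. To rescue the idea one would have to re-run the full Case I / Case II bookkeeping of Lemma~\ref{hill} with slack, tracking not just the sizes of the $\theta$-jumps but where in $[c(L)-L,c(L)]$ the variable $g$ sits when each one occurs; that is a substantially harder argument than the sketch suggests, and it is not clear it succeeds.

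The paper sidesteps this entirely. Lemma~\ref{planar variation} ``unrolls'' $\gamma$ into a planar piecewise geodesic $\alpha:\R\to\Hp$ with the same distance function $s(t)$, smaller bending angles, and a monotone angular coordinate $\Psi$. Lemma~\ref{hill} plus the taxicab-metric argument shows $\alpha$ is proper, hence has two ideal endpoints $\xi^\pm$, and the deficit $G(L)-\|\gamma\|_L$ is cashed in once, globally: $\Psi_{+\infty}-\Psi_{-\infty}\le\pi-B$ with $B=(G(L)-\|\gamma\|_L)/2$, so every $\alpha(t)$ sees $\xi^\pm$ at visual angle $\ge B$ and lies within $C=\cosh^{-1}(1/\sin(B/2))$ of the geodesic $g$ joining $\xi^\pm$; the orthogonal projection onto $g$ is then $\sin^2(B/2)$-bilipschitz on $\alpha$, giving $K=1/\sin^2(B/2)$. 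That global use of the strict inequality is the step your local excursion argument is missing. (Your small-$t$ argument via the restoring term $1/\tanh(s)$ in $\theta'$ is also only a sketch, but that is a secondary point; in the paper's proof the small-$t$ regime is handled uniformly by the same projection estimate, so no separate case is needed.)
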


\begin{proof} We first set our notation.
We may assume, without loss of generality,
that $0$ is not a bending point of $\gamma$. Let $t_0=0$ and assume that the bending points in $(0,\infty)$
are indexed by an interval  of  positive integers beginning with $1$ and the ending points in $(-\infty,0)$
are indexed by an interval of  negative integers ending with $-1$. Let $\phi_i$ be the bending angle
of $\gamma$ at $t_i$.

The following lemma will allow us to reduce to the planar setting.

\begin{lemma}
\label{planar variation}
There exists an embedded piecewise geodesic $\alpha:\R \rightarrow \Hp$ 
with the same bending points as $\gamma$
such that  
\begin{enumerate}
\item
if the bending angle of $\alpha$ at a bending point $t_i$ is given by $\phi_i'$, 
then $\phi_i'\le\phi_i$,
\item
$d(\alpha(0),\alpha(t)) = d(\gamma(0),\gamma(t))$ for all $t$, and
\item 
there exists a non-decreasing function $\Psi:\mathbb R\to (-\pi,\pi)$
such that if $t>0$, then $\Psi(t)$ is the angle between $\alpha([0,t_1])$ and the geodesic joining $\alpha(0)$ to $\alpha(t)$,
while if $t<0$, then $\Psi(t)$ is the angle between $\alpha([-t_1,0])$ and the geodesic joining $\alpha(0)$ to $\alpha(t)$.
\end{enumerate}
\end{lemma}


\begin{figure}[htbp] 
   \centering
   \includegraphics[width=4in]{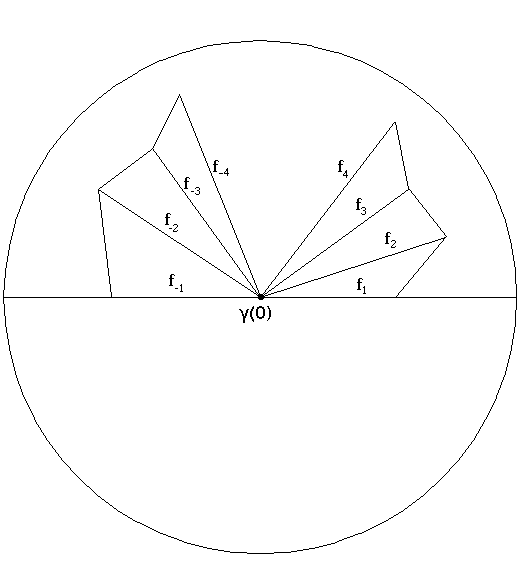} 
   \caption{The curve $\alpha$}
   \label{quasigeodesicfig}
\end{figure}

\begin{proof}
Let $f_i$ be the geodesic arc from $\gamma(0)$ to $\gamma(t_i)$ and let $T_i$ be the hyperbolic triangle with vertices
$\gamma(0)$, $\gamma(t_{i})$, and $\gamma(t_{i+1})$ and edges $f_i$, $\gamma([t_i,t_{i+1}])$ and $f_{i+1}$.
We construct $\alpha$ by first placing  an isometric copy of $T_0$ in $\Hp$, so that $f_1$ is counterclockwise from $f_0$.
We then iteratively place a copy of
$T_i$ adjacent to a copy of $T_{i-1}$(so that their interiors are disjoint) along the image of $f_i$ for all positive $t_i$.
We then place a copy of $T_{-1}$ in $\Hp$ so that $T_{-1}$ and $T_0$ intersect along the image of $\gamma(0)$, so
that the images of $f_1$ and $f_{-1}$ lie in a geodesic  and the image of $f_{-2}$ is clockwise from $f_{-1}$.
We then iteratively place a copy of $T_{-i-1}$ next to the copy of $T_{-i}$ for all negative $t_{-i}$
(see  Figure \ref{quasigeodesicfig}).

Let $\alpha:\mathbb R\to\Hp$ be the piecewise
geodesic traced out by the images of pieces of $\gamma$. Then $\alpha$ has the same bending points as $\gamma$
by construction. Moreover, since $d(\alpha(0),\alpha(t))$ is realized  in the isometric copy of $T_n$ when $t\in[t_n,t_{n+1}]$,
it is also immediate that $d(\alpha(0),\alpha(t)) = d(\gamma(0),\gamma(t))$ for all $t$.

We next check that the bending angle $\phi_i'$ of $\alpha$ at $t_i$ is at most $\psi_i$.
We consider the vectors $v^-_n = \gamma'_-(t_i)$ and $v_n^+ = \gamma'_+(t_i)$ at $\gamma(t_i)$. 
Then the  exterior angle $\phi_i$ is  the distance between $v_i^-$ and $v_i^+$ in the unit tangent sphere at $\gamma(t_i)$. 
The edge $f_n$ defines an axis in the unit sphere. The possibilities for gluing $T_n$ to $T_{n-1}$ are given by 
the one-parameter family of triangles obtained by rotating $T_i$ about $f_i$. It is then easy to see that the distance is
shortest when $T_i$ lies in the same plane as $T_{i-1}$ and has disjoint interior 
Therefore, $\phi_n'\le\phi_n$. Since 
$$||\alpha||_L\le||\gamma||_L<G(L),$$
Corollary \ref{embeddedgeodesic} implies that $\alpha$ is an embedding.

We can now define a continuous non-decreasing function $\Psi:\mathbb R\to\mathbb R$ so that $\Psi(0)=0$ and,
if $t>0$, then $\Psi(t)$ is the angle, modulo $2\pi$, between $\alpha([0,t_1])$ and the geodesic joining $\alpha(0)$ to $\alpha(t)$,
while if $t<0$, then $\Psi(t)$ is the angle between $\alpha([-t_1,0])$ and the geodesic joining $\alpha(0)$ to $\alpha(t)$.

We next show that $\Psi (t)< \pi$ for all $t>0$. If not, then $\gamma$ intersects the line $g_0$ containing
$\alpha([0,t_1])$. Suppose that $\alpha(b)\in g_0$ for some $b>0$. Then, consider the piecewise geodesic
$\hat\alpha$ which first traces $\alpha([0,b])$ backwards and then continues along $g_0$ forever.
Notice that $\hat\alpha$ is not an embedding. However,
$$||\hat\alpha||_L\le ||\alpha||_L<G(L),$$
so Corollary \ref{embeddedgeodesic} implies that $\hat\alpha$ is an embedding,
which is a contradiction. Similarly, $\Psi(t)> -\pi$ for all $t<0$. This completes the proof of (3).
 \end{proof}
 
We notice that it suffices to show that there exists $K$ depending only on $L$ and $||\gamma||_L$, so
that 
$$s(t)=d(\gamma(0),\gamma(t))=d(\alpha(0),\alpha(t))\ge K|t| $$
for all $t\in\mathbb R$.
Since, if we suppose that  $\gamma:\mathbb R\to \Ht$ is any piecewise geodesic with $||\gamma||_L<G(L)$
and $r_1<r_2$, then we can consider the new piecewise geodesic $\gamma_{r_1}:\mathbb\R\to \Ht$ given
by $\gamma_{r_1}(t)=\gamma(t-r_1)$. Then $||\gamma_{r_1}||_L=||\gamma||_L$ and 
$$s_{r_1}(t)=d(\gamma_{r_1}(t),\gamma_{r_1}(0))\ge K |t|.$$
It follows that
$$d(\gamma(r_1),\gamma(r_2))=s_{r_1}(r_2-r_1)\ge K|r_2-r_1|$$
Since $\gamma$ is 1-Lipschitz by definition, it follows immediately that $\gamma$ is a \hbox{$K$-bilipschitz} embedding.

\medskip

Since $\Psi$ is monotone and bounded we may define
$$\Psi_{+\infty}=\lim_{t\to\infty}\Psi(t)\ \ \ {\rm and}\ \ \ \ \Psi_{-\infty}=\lim_{t\to -\infty} \Psi(t).$$

We now show that $\alpha$ is proper. The basic idea is that, since $\Psi$ is monotonic, then $\alpha([0,\infty))$ can only 
accumulate on the geodesic ray $\vec r_+$ emanating from $\alpha(0)$ and making angle $\Psi_+$ with $\alpha([0,t_1])$.
If it accumulate at $q$, then there must be infinitely many segments of $\alpha$ running nearly parallel to
$\vec r_+$ and accumulating at some point $q$ on $\vec r$. However, by Lemma \ref{hill},  
no segment of $\alpha$ can be pointing nearly  straight back to $\alpha(0)$, so the total length of
these segments which are  ``pointing towards''  $\alpha(0)$ is finite. This will allow us to arrive at a contradiction.

If $\alpha$ is not proper, then either  $\alpha|_{[0,\infty)}$ or $\alpha|_{(-\infty,0]}$ is not proper. 
We may assume ray $\alpha|_{[0,\infty)}$ is not proper. We recall that if $t$ is not a bending point,
then $\theta(t)$ is the angle between $\alpha'(t)$ and the geodesic segment joining $\alpha(0)$ to $\alpha(t)$.
Lemma \ref{hill} implies that 
$$\theta(t)\le \Theta_0 = \Theta(L) + G(L) < \pi$$
for all $t$.
Since $\alpha|_{[0,\infty)}$ is not proper,  there is an accumulation point  $q$  of
$\alpha|_{[0,\infty)}$ on the ray $\vec r_+$ emanating from $\alpha(0)$  which makes an angle 
$\Phi_{+\infty}$ with $\alpha([0,t_1])$.

We may work in the disk model and assume that $\alpha(0)=0$ and $\alpha([0,t_1])$ lies in the positive
real axis. If $\epsilon>0$ is small enough, we can consider the region given in  hyperbolic polar coordinates
$(\bar r,\bar \theta)$ by
$$B_\epsilon = [r(q)-\epsilon, r(q)+\epsilon] \times [\theta(q)-\epsilon, \theta(q)]\subset \D^2.$$
On $B_\epsilon$ we consider the taxicab metric, given by $d_T((r_1,\theta_1),(r_2,\theta_2))= |r_1-r_2| + |\theta_1-\theta_2|$. We notice that that $d_T$ on $B_\epsilon$ is bilipschitz to the hyperbolic metric.
If $J=\alpha^{-1}(B_\epsilon)$, then $J$ is a countable collection of disjoint arcs.  Notice that
$\alpha(J)=\alpha([0,\infty))\cap B_\epsilon$.

Since $\Psi$ is monotonic, the $\bar\theta$ coordinate of $\alpha$ is monotonic, so  
the total length  of $\alpha(J)$ in the $\bar\theta$ direction  is bounded above by $\epsilon$. 
Also the signed length  of $\alpha(J)$ in the $r$ direction is bounded above by $2\epsilon$.
Since $\theta(t)\le\Theta_0$, at all non-bending points,
the total length in the negative $r$-direction is bounded above by 
$\epsilon\tan(\Theta_0)$. 
Therefore, the total length in the positive $r$-direction is bounded above by $\epsilon + \epsilon\tan(\Theta_0)$. 
It follows that $\alpha(J)$ has finite length in the taxicab metric  on $B_\epsilon$.  
We choose $\bar t\in J$, so that \hbox{$\alpha(J\cap [\bar t,\infty))$} has length, in the taxicab metric,
less than $\epsilon/4$ and $d_{B_\epsilon}(\alpha(\bar t),q)<\epsilon/4$. Therefore,
$\alpha(J\cap [\bar t,\infty))\subset B_{\epsilon/2}(q)$ and $\overline{B_{\epsilon/2}(q))}\subset B_\epsilon$
(where $B_{\epsilon/2}(q)$ is the neighborhood of radius $\epsilon/2$ of $q$ in the taxicab metric on $B_\epsilon$).
It follows that $[\bar t,\infty)\subset J$, which contradicts the fact that
$\alpha([\bar t,\infty))$ has infinite length. Therefore, $\alpha$ must be proper.

Since $\alpha$ is proper and $\Psi$ is monotone, $\alpha$ has two unique limit points $\xi^-$ and $\xi^+ $ in  $\Sp^1$
which are endpoints of the geodesic rays from $\alpha(0)$ which make angles
$\Psi_{-\infty}$ and $\Psi_{+\infty}$ with $\alpha([t_{-1},t_1])$. Thus, since $\alpha$ is embedded,
$$\Psi_{+\infty} - \Psi_{+\infty} \leq \pi.$$
Let 
$$B=\frac{G(L)-||\mu||_L}{2}$$
We further observe that 
$$\Psi_{+\infty} -\Psi_{-\infty} \le \pi - B$$ 
If not, we construct a new piecewise geodesic $\alpha_1:\mathbb R\to \Hp$ which
has a bend of angle $\frac{3(G(L)-||\mu||_L)}{4}$ at $0$. One then checks that
$$||\alpha_1||_L\le ||\alpha||_L+3/4\left( G(L)-||\mu||_L\right)<G(L)$$ 
but $\alpha_1$ is not an embedding, which would contradict Corollary
\ref{embeddedgeodesic}.

Let $g$ be the geodesic joining $\xi^-$ to $\xi^+$.
Since $\Psi_{+\infty}-\Psi_{-\infty}  \le \pi-B$, the
visual distance between $\xi^+$ and $\xi^-$,
as viewed from $\alpha(0)$ is at least $B$. It follows that there
exists $C$, depending only on $B$, so that $d(\alpha(0),g))\le C$.
In fact,  one may apply Theorem 7.9.1 in Beardon \cite{beardon} to check that we may 
choose
$$C = \cosh^{-1}\left(\frac{1}{\sin(B/2)}\right).$$ 
Notice that, by considering a reparameterization of $\alpha$, we can see that the visual distance
between $\xi^+$ and $\xi^-$ is at least $B$ as viewed from $\alpha(t)$ for any $t\in\mathbb R$,
and thus that $\alpha(t)$ lies within $C$ of $g$ for any $t\in\mathbb R$.

We next claim there exists $K>0$ such that if $p:\Hp\to g$ is orthogonal projection, then
$p\circ\alpha$ is a $1$-Lipschitz, $K$-bilipschitz orientation-preserving embedding. The fact
that $p\circ\alpha$ is 1-Lipschitz follows immediately from the fact that both $p$ and $\alpha$ are
1-Lipschitz. Let $\nu_0$ be the angle between the orthogonal geodesic $h_0$ to $g$ through $\alpha_0$
and the geodesic segment $\alpha([t_{-1},t_1])$ chosen so that $\nu_0>0$ if $\alpha(t_1)$ lies on the same side
of $h_0$ as $\xi^+$. Notice that  
$$\frac{B}{2}\le \nu_0 \le \pi-\frac{B}{2}$$
since otherwise $\Psi_{+\infty}-\Psi_{-\infty}\ge \pi-B$.
Therefore, the restriction of $p\circ\alpha$ to $[t_{-1},t_1]$ is an orientation-preserving embedding.
We let $v_0$ be a unit tangent vector at $\alpha(0)$ perpendicular to $g$. Then  
$$||p'(\alpha(0))(v)|| = \frac{1}{\cosh(d(\alpha(0),g))} \ge \frac{1}{\cosh(C)} = \sin(B/2)$$
As $\alpha'(0)$ makes an angle at most $B/2$ with $v$
$$||(p\circ\alpha)'(0)|| \ge \frac{\sin(B/2)}{\cosh(C)} = \sin^2(B/2) = \frac{1}{K}.$$
Again, by reparameterizing, we may check that if $t$ is a non-bending point, then $p\circ\alpha$ is an
orientation-preserving local homeomorphism at $t$ and that 
$$||(p\circ\alpha)'(t)||\ge \frac{1}{K}.$$
It follows that, for all $t$,
$$d(p(\gamma(0)), p(\gamma(t))\ge \frac{1}{K}t.$$
Therefore, since $p$ is 1-Lipschitz,
$$s(t) = d(\alpha(0),\alpha(t)) \geq d(p(\gamma(0)), p(\gamma(t))) \geq \frac{t}{K} $$
We observed earlier that this is enough to guarantee that 
$\gamma$ is  $K$-bilipschitz.
\end{proof}

As an immediate corollary, we obtain a version of Theorem \ref{ej} for finite-leaved laminations.

\begin{corollary}
\label{finite ej}
If $\mu$ is a finite-leaved measured lamination on $\Hp$ such that 
$$||\mu||_L < G(L),$$ 
then $P_\mu$ is a $K$-bilipschitz embedding, where $K$ depends 
only on $L$ and $||\mu||_L$.
\end{corollary}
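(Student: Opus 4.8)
The plan is to obtain Corollary \ref{finite ej} as an essentially formal consequence of Proposition \ref{quasigeodesic}, by slicing the pleated plane $P_\mu$ along the geodesic lines of $\Hp$. Concretely, I would show that each such slice is a piecewise geodesic to which Proposition \ref{quasigeodesic} applies with a constant that does not depend on the chosen line, and then assemble these one-dimensional estimates into a global bilipschitz bound for $P_\mu$.

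First I would fix distinct points $x,y\in\Hp$ and let $\alpha\colon\R\to\Hp$ be the unit-speed geodesic line through them, with $\alpha(0)=x$ and $\alpha(d)=y$ where $d=d_{\Hp}(x,y)$. Since $\mu$ is finite-leaved and a geodesic of $\Hp$ meets each leaf of $\mu$ in at most one point, the composition $\gamma=P_\mu\circ\alpha\colon\R\to\Ht$ has only finitely many bending points and is therefore a piecewise geodesic in the sense of Section \ref{embedding}; since $P_\mu$ is an isometry on each flat, $\gamma$ is parameterized by arc length. Moreover, a subsegment of $\gamma$ of length $L$ is $P_\mu\circ\alpha|_I$ for an open subarc $I$ of $\alpha$ of length $L$, and its total bending angle equals $\sum\mu(l)=i(\mu,\alpha|_I)$ over the leaves $l$ met by $I$; hence $||\gamma||_L\le||\mu||_L<G(L)$, as already noted in the discussion of finite-leaved laminations preceding this corollary.

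Then I would invoke Proposition \ref{quasigeodesic} to conclude that $\gamma$ is $K$-bilipschitz, so that $d_{\Ht}(P_\mu(x),P_\mu(y))=d_{\Ht}(\gamma(0),\gamma(d))\ge d/K=d_{\Hp}(x,y)/K$. The only point requiring care is that $K$ must be chosen uniformly in $x,y$: Proposition \ref{quasigeodesic} a priori gives a constant depending on $||\gamma||_L$, which varies with the line $\alpha$. For this I would read off from the proof of Proposition \ref{quasigeodesic} that one may take $K=\csc^2\!\big((G(L)-||\gamma||_L)/4\big)$, and since $||\gamma||_L\le||\mu||_L$ and $t\mapsto\csc^2\!\big((G(L)-t)/4\big)$ is increasing on $[0,G(L))$ (the argument stays in $(0,\pi/4)$ because $G(L)<\pi$), the single choice $K=\csc^2\!\big((G(L)-||\mu||_L)/4\big)$, which depends only on $L$ and $||\mu||_L$, works for every line $\alpha$. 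Combining the lower bound above with the fact that $P_\mu$ is $1$-Lipschitz — it is an isometry on flats and bending along a geodesic is distance non-increasing, so $P_\mu$ restricted to any geodesic, hence $P_\mu$ itself, is $1$-Lipschitz — yields $\tfrac1K d_{\Hp}(x,y)\le d_{\Ht}(P_\mu(x),P_\mu(y))\le d_{\Hp}(x,y)$, so $P_\mu$ is $K$-bilipschitz and in particular an embedding. I do not expect a genuine obstacle here: the substantive work is already contained in Proposition \ref{quasigeodesic}, and what remains is the routine verification that geodesic lines see every pair of points of $\Hp$ together with the bookkeeping needed to make the bilipschitz constant uniform.
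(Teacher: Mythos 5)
Your argument matches the paper's: slice $P_\mu$ along geodesics of $\Hp$ to obtain piecewise geodesics $\gamma = P_\mu\circ\alpha$ with $||\gamma||_L\le||\mu||_L < G(L)$, then invoke Proposition \ref{quasigeodesic}. The paper treats the uniformity of $K$ as implicit; you make it explicit by extracting $K=\csc^2\bigl((G(L)-||\gamma||_L)/4\bigr)$ from the proof of Proposition \ref{quasigeodesic} and observing that it is increasing in $||\gamma||_L$, which is a correct reading of that proof.
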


\subsection{Proof of Theorem \ref{ej}}
Suppose that $\mu$ is a measured lamination on $\Hp$ with $||\mu||_L  < G(L)$. 
By Lemma 4.6 in Epstein-Marden-Markovic \cite{EMM2}, there exists a sequence $\{\mu_n\}$ of finite-leaved measured
laminations  which converges to $\mu$ such that $||\mu_n||_L = ||\mu||_L$ for all $n$.
Corollary \ref{finite ej} implies that each $P_{\mu_n}$ is a $K$-bilipschitz embedding
where $K$  depends only on $L$ and $||\mu||_L$.
The maps $\{P_{\mu_n}\}$ converges uniformly on compact sets to $P_\mu$ (see \cite[Theorem III.3.11.9]{EM87}),
so $P_\mu$ is also a $K$-bilipschitz embedding. Therefore, $P_\mu$ extends  continuously to
$\hat{P}_\mu: \Hp\cup \Sp^1_\infty \rightarrow \Ht\cup\Sp^2_\infty$ and $\hat P_\mu(\Sp^1)$ is a quasi-circle.
\eproof

\section{Complex earthquakes}

In this section, we use Theorem \ref{ej} to give improved bounds in results of Epstein-Marden-Markovic
which will lead to the improved bound obtained in our main result. We first obtain new bounds 
guaranteeing that complex earthquakes extend to homeomorphisms at infinity, see Corollaries \ref{eqembedding}
and \ref{eqembedding2}. Once we have done
so, we obtain a generalization of \cite[Theorem 4.14]{EMM1} which produces a family of conformally natural quasiconformal maps
associated to complex earthquakes with the same support $\mu$ which satisfy the bounds obtained in
Corollary \ref{eqembedding} or Corollary \ref{eqembedding2}.
Finally, we give a version of \cite[Theorem 4.3]{EMM2} which gives rise to a family of quasiregular maps
associated to all complex earthquakes with positive bending along $\mu$.

If $\mu$ is a measured lamination on $\Hp$,  we define $E_{\mu}: \Hp \rightarrow \Hp$ to be the earthquake map defined by 
fixing a component of the complement of $\mu$ and left-shearing all other components by an amount given by the measure on 
$\mu$.  An earthquake map is continuous  except on leaves of $\mu$ with discrete measure and extends to a homeomorphism
of $\Sp^1$.Therefore, any measured lamination $\lambda$ on $\Hp$ is mapped to a  well-defined measured lamination on $\Hp$
which we denote $E_\mu(\lambda)$. 

Given a measured lamination $\mu$ on $\Hp$ and $z=x+iy\in\mathbb C$, 
we define the {\em complex earthquake} 
$$\C E_{z}=P_{yE_{x\mu}}\circ E_{x\mu}: \Hp \rightarrow \Ht$$ 
to be the composition of earthquaking along $x\mu$ 
and then bending along the lamination $yE_{x\mu}(\mu)$. The sign of $y$ determines the direction of the bending. 
By linearity,
$$|| yE_{x\mu}(\mu)||_L  = |y|\ ||E_{x\mu}(\mu)||_L.$$
(See Epstein-Marden \cite[Chapter 3]{EM87} or Epstein-Marden-Markovic \cite[Section 3]{EMM1} for a detailed
discussion of complex earthquakes.)

The following estimate allows one to bound $||E_{x\mu}(\mu)||_L$.

\begin{theorem}{\rm (Epstein-Marden-Markovic \cite[Theorem 4.12]{EMM1})}
\label{earthquake bounds}
Let $\ell_1$ and $\ell_2$ be distinct leaves of a measured lamination  $\mu$ on $\Hp$.
Suppose that $\alpha$ is a closed geodesic segment with endpoints on $\ell_1$ and $\ell_2$ and let
$x=i(\alpha,\mu)$.
Let $\ell_1'$ and $\ell_2'$  be the images of  $\ell_1$ and $\ell_2$ under the earthquake  $E_\mu$. 
Then 
$$\sinh(d(\ell_1', \ell_2')) \leq e^x \sinh(d(\ell_1, \ell_2)) \ \ \  \mbox{\rm and } \ \ \ \ d(\ell_1', \ell_2') \leq e^{x/2}d(\ell_1, \ell_2).
$$
Furthermore,
$$\sinh(d(\ell_1 , \ell_2)) \leq e^x \sinh(d(\ell_1', \ell_2')) \ \ \  \mbox{\rm and } \ \ \ \ d(\ell_1, \ell_2) \leq e^{x/2}d(\ell_1', \ell_2').
$$
\end{theorem}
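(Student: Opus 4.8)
The plan is to reduce to a single elementary shear along one leaf and then iterate that estimate. First I would reduce to the case that $\mu$ is finite-leaved: choosing finite-leaved measured laminations $\mu_n\to\mu$ with $i(\alpha,\mu_n)\to i(\alpha,\mu)$ (as in the approximation results used in \cite{EMM1,EMM2}), the earthquakes $E_{\mu_n}$ converge to $E_\mu$ (see \cite[Chapter 3]{EM87}), so $E_{\mu_n}(\ell_j)\to E_\mu(\ell_j)$ as geodesics and all four inequalities pass to the limit. After perturbing $\alpha$ (again harmless in the limit) we may assume $\alpha$ is transverse to $\mu$ and meets no atom of $\mu$; then, because distinct leaves of a lamination are disjoint, the leaves of $\mu$ crossing $\alpha$ are exactly the leaves separating $\ell_1$ from $\ell_2$, each such leaf is disjoint from both $\ell_1$ and $\ell_2$, and the sum of their weights is $i(\alpha,\mu)=x$.

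The crux is the following single-shear estimate. Let $m$ be a geodesic strictly between $\ell_1$ and $\ell_2$ (hence disjoint from each), let $\tau_t$ be translation along $m$ by signed distance $t$, set $\ell_2(t)=\tau_t(\ell_2)$ and $d(t)=d(\ell_1,\ell_2(t))$; then
$$|d'(t)|\le\tanh\!\left(\frac{d(t)}{2}\right).$$
To see this, by the first variation of the distance to a geodesic, $d'(t)$ is the component along the common perpendicular $P(t)$, at its foot $q$ on $\ell_2(t)$, of the Killing field $X$ generating $\tau$. Putting $m$ on the imaginary axis of the upper half-plane, $|X(q)|=\cosh r$ with $r=d(q,m)$, and $X(q)$ is tangent to the hypercycle through $q$, hence orthogonal to the perpendicular from $q$ to $m$; a right-triangle identity then gives $|d'(t)|=|\cos\gamma|$, where $\gamma$ is the angle at which $P(t)$ crosses $m$. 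Writing $\cos\gamma=c/\sqrt{1+c^2}$, and letting $b,b'>0$ be the distances from $P(t)\cap m$ to $\ell_2(t)$ and to $\ell_1$ along $P(t)$ (so $b+b'=d(t)$), the requirement that $m$ miss $\ell_1$ and $\ell_2(t)$, which lie on opposite sides of $m$, translates into $e^{-b},e^{-b'}<\sqrt{1+c^2}-c$; multiplying these and optimizing subject to $b+b'=d(t)$ yields $|\cos\gamma|<\tanh(d(t)/2)$.

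Granting this, integration gives the single-shear inequalities: $\bigl|\tfrac{d}{dt}\log\sinh d(t)\bigr|=\coth(d(t))\,|d'(t)|\le\coth(d(t))\tanh(d(t)/2)=\tfrac{\cosh d(t)}{1+\cosh d(t)}<1$ and $\bigl|\tfrac{d}{dt}\log d(t)\bigr|=|d'(t)|/d(t)\le\tanh(d(t)/2)/d(t)\le\tfrac12$, so a shear of size $w$ gives $e^{-w}\sinh d(0)\le\sinh d(w)\le e^{w}\sinh d(0)$ and $e^{-w/2}d(0)\le d(w)\le e^{w/2}d(0)$. For finite-leaved $\mu$ I would then realize $E_\mu$, up to a global isometry, as $\tau^{m_1}_{w_1}\circ\cdots\circ\tau^{m_k}_{w_k}$ applied to $\ell_2$ and the identity on $\ell_1$, where $m_1,\dots,m_k$ are the separating leaves (ordered from $\ell_1$ towards $\ell_2$, with weights $w_i$); peeling off the factors one at a time and using $d(\ell_1,g\ell_2)=d(g^{-1}\ell_1,\ell_2)$ expresses $d(\ell_1',\ell_2')$ as the outcome of $k$ successive single shears of sizes $w_1,\dots,w_k$, so the $\sinh$-ratios multiply by $e^{w_i}$ and the distance-ratios by $e^{w_i/2}$; since $\sum w_i=x$ this gives all four inequalities. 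Leaves of $\mu$ that do not separate $\ell_1$ from $\ell_2$ translate $\ell_1$ and $\ell_2$ in tandem and hence leave $d(\ell_1',\ell_2')$ unchanged.

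The main obstacle is the single-shear estimate $|d'(t)|\le\tanh(d(t)/2)$: it is the only step that uses the hypothesis that the shearing leaf lies strictly between $\ell_1$ and $\ell_2$, and extracting the sharp constant seems to require the explicit hyperbolic-trigonometric computation above rather than any soft argument. The surrounding reductions—approximation by finite-leaved laminations, decomposing a finite earthquake into elementary shears, and the bookkeeping for non-separating leaves and for atoms on $\ell_1,\ell_2$—are routine but must be set up with some care.
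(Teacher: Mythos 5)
The paper cites this result from Epstein--Marden--Markovic \cite[Theorem 4.12]{EMM1} and does not supply its own proof, so your argument has to stand or fall on its own. It stands. Your single-shear derivative estimate is correct: with $m$ on the imaginary axis and the Killing field $X(z)=z$, one gets $|X(q)|=\cosh r$, $X(q)$ tangent to the hypercycle through $q$; using the right triangle with legs $r$ and $|pf|$ (hypotenuse along $P(t)$) and the identity $\cos\gamma=\cosh r\,\sin\beta$, the first variation gives $|d'(t)|=\cosh r\cdot\sin\beta=|\cos\gamma|$. Placing $P(t)$ on the imaginary axis, $\ell_2(t)$ as the unit circle and $m$ through $e^{b}i$ at angle $\gamma$, disjointness of $m$ from $\ell_2(t)$ is exactly $e^{-b}<\sqrt{1+c^2}-|c|$ (you should write $|c|$, though taking $\gamma$ acute makes $c\geq 0$ harmless), and similarly for $\ell_1$; multiplying gives $e^{-d}<(\sqrt{1+c^2}-|c|)^2=\tfrac{1-|\cos\gamma|}{1+|\cos\gamma|}$, i.e.\ $|\cos\gamma|<\tanh(d/2)$ --- no optimization over $b+b'=d$ is actually needed, just the product. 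The two integrations using $\coth(d)\tanh(d/2)=\tfrac{\cosh d}{1+\cosh d}<1$ and $\tanh(d/2)/d\leq\tfrac12$ give exactly the four single-shear inequalities.

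The one place I'd push you to write more carefully is the iteration. Normalizing $E_\mu$ to fix the flat beyond $\ell_1$, one has $\ell_2'=\tau_{m_1}^{w_1}\cdots\tau_{m_k}^{w_k}\ell_2$ with the $\tau_{m_i}$ translations along the \emph{original} leaves, so comparing $d(\ell_1,\tau_{m_1}^{w_1}\cdots\tau_{m_j}^{w_j}\ell_2)$ with $d(\ell_1,\tau_{m_1}^{w_1}\cdots\tau_{m_{j-1}}^{w_{j-1}}\ell_2)$ by conjugating with $(\tau_{m_1}^{w_1}\cdots\tau_{m_{j-1}}^{w_{j-1}})^{-1}$ reduces the step to a single shear of $\ell_2$ along $m_j$, viewed against the moved geodesic $A_{j-1}=\tau_{m_{j-1}}^{-w_{j-1}}\cdots\tau_{m_1}^{-w_1}\ell_1$. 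You must check that $m_j$ still separates $A_{j-1}$ from $\ell_2$; this follows by the easy induction that each $A_j$ stays on the $\ell_1$-side of $m_j$ (translation along $m_j$ preserves its two sides), but ``peeling off the factors one at a time'' as stated elides this point. With that filled in, and with your (correct) observations that non-separating leaves don't change $d(\ell_1,\ell_2)$, that atoms on $\ell_1,\ell_2$ only increase $x$, and that the finite-leaved approximation passes to the limit, the proof is complete and, as far as the structure goes, is essentially the approach taken in \cite{EMM1}.
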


Motivated by this result, Epstein, Marden,  and Markovic define the function
$$f(L,x) = \min\left(Le^{|x|/2}, \sinh^{-1}(e^{|x|}\sinh(L))\right).$$ 

Corollary 4.13 in \cite{EMM1} generalizes to give:

\begin{corollary}
If $\mu$ is a measured lamination on $\Hp$, $z=x+iy\in\mathbb C$,  and $L>0$, 
then
$$||E_{x\mu}(\mu)||_{L}  \leq \left\lceil{\frac{f(L,x)}{L}}\right\rceil ||\mu||_L.$$
Furthermore, if  
$$|y| < \frac{G(L)}{\left\lceil{\frac{f(L,x)}{L}}\right\rceil ||\mu||_L},$$
then $\C E_z$ extends to an embedding of $\Sp^1$ into $\rs$.
\label{eqembedding}
\end{corollary}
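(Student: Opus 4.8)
The two assertions are best handled separately: the $L$-roundness inequality is a direct generalization of \cite[Corollary 4.13]{EMM1}, with $1$ replaced by $L$ throughout, while the embedding statement is then a formal consequence of that inequality together with Theorem \ref{ej}.

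For the first assertion, I would fix an open geodesic arc $\alpha$ of length $L$ in the target copy of $\Hp$ and estimate $i(E_{x\mu}(\mu),\alpha)$. The earthquake $E_{x\mu}$ is a measure-preserving bijection of $\Hp$ that carries leaves to leaves and preserves the two sides of each leaf, so a leaf of $E_{x\mu}(\mu)$ meets $\alpha$ exactly when the corresponding leaf of $\mu$ separates the pullbacks $p_0,q_0$ of the endpoints of $\alpha$; hence $i(E_{x\mu}(\mu),\alpha)=i(\mu,\alpha_0)$, where $\alpha_0$ is the geodesic segment from $p_0$ to $q_0$ (leaves through the endpoints being handled via the remark that open and half-open length-$L$ arcs give the same roundness). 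If no leaf of $E_{x\mu}(\mu)$ meets $\alpha$ the bound is trivial; otherwise let $\ell_1',\ell_2'$ be the two outermost leaves of $E_{x\mu}(\mu)$ met by $\alpha$. Since both are met by an arc of length $L$, their common perpendicular has length at most $L$, so that applying Theorem \ref{earthquake bounds} to the preimage leaves $\ell_1=E_{x\mu}^{-1}(\ell_1')$ and $\ell_2=E_{x\mu}^{-1}(\ell_2')$ of the shear $E_{x\mu}$ bounds the distance $d(\ell_1,\ell_2)$ in the source, the resulting estimate being $f(L,x)$. Every leaf of $\mu$ met by $\alpha_0$ crosses the common perpendicular of $\ell_1$ and $\ell_2$, whose length is at most $f(L,x)$; cutting that perpendicular into at most $\lceil f(L,x)/L\rceil$ subsegments of length at most $L$ and invoking the definition of $L$-roundness gives $i(\mu,\alpha_0)\le\lceil f(L,x)/L\rceil\,||\mu||_L$, and taking the supremum over $\alpha$ yields the first inequality.

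For the second assertion, write the complex earthquake as $\C E_z=P_{\nu}\circ E_{x\mu}$ with bending lamination $\nu=yE_{x\mu}(\mu)$. By linearity of $L$-roundness and the inequality just proved, $||\nu||_L=|y|\,||E_{x\mu}(\mu)||_L\le|y|\,\lceil f(L,x)/L\rceil\,||\mu||_L$, and the hypothesis on $|y|$ therefore forces $||\nu||_L<G(L)$. (If $||\mu||_L=0$ the lamination is empty and $\C E_z$ is an isometry, so there is nothing to prove.) Theorem \ref{ej} then shows that $P_{\nu}$ extends continuously to $\hat P_\nu:\Hp\cup\Sp^1\to\Ht\cup\rs$ with $\hat P_\nu(\Sp^1)$ a quasi-circle; in particular $\hat P_\nu$ restricts to an embedding of $\Sp^1$ into $\rs$, and since the earthquake $E_{x\mu}$ extends to a homeomorphism of $\Sp^1$, the composite $\C E_z$ extends to an embedding of $\Sp^1$ into $\rs$.

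The delicate point is the middle of the first argument: one must identify precisely which crossing number governs the distortion of $E_{x\mu}$ between $\ell_1$ and $\ell_2$, so that Theorem \ref{earthquake bounds} yields the clean quantity $f(L,x)$ rather than a bound that still involves $i(E_{x\mu}(\mu),\alpha)$, and one must track the contributions of atomic leaves and of the two endpoint leaves carefully enough that the covering argument returns exactly $\lceil f(L,x)/L\rceil$. Here I would follow the proof of \cite[Corollary 4.13]{EMM1} line by line.
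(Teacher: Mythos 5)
Your proposal follows the same route as the paper: you use Theorem \ref{earthquake bounds} to pull back a length-$L$ arc to an arc of length at most $f(L,x)$ meeting the corresponding leaves of $\mu$, subdivide that arc into $\lceil f(L,x)/L\rceil$ subarcs of length at most $L$ to obtain $||E_{x\mu}(\mu)||_L\le\lceil f(L,x)/L\rceil\,||\mu||_L$, and then invoke Theorem \ref{ej} together with the fact that $E_{x\mu}$ extends to a circle homeomorphism. The paper organizes the first part slightly differently, first proving the inequality $||E_{x\mu}(\mu)||_A\le||\mu||_{f(A,x)}$ for all $A>0$ (so it can be reused for Corollary \ref{eqembedding2}) and only then subdividing, but the geometric content is identical.
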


We similarly define
$$g(L,x) = \max\left(Le^{-|x|/2}, \sinh^{-1}(e^{-|x|}\sinh(L))\right).$$
We will show later, see Lemma \ref{g is simple}, that if $2 \tanh(L) > L$ then \hbox{$g(L,x) = L e^{-|x|/2}$}.

Theorem \ref{earthquake bounds} and Theorem \ref{ej} combine to give the following:
 
\begin{corollary}
If $\mu$ is a measured lamination on $\Hp$, $z=x+iy\in\mathbb C$,  and $L>0$, then
 $$||E_{x\mu}(\mu)||_{g(L,x)}  \leq ||\mu||_L.$$
Furthermore, if 
$$|y| < \frac{G(g(L,x))}{||\mu||_L},$$
then $P_{yE_{x\mu}}$ is a bilipschitz embedding and $\C E_z$ extends to an
embedding of $\Sp^1$ into $\rs$.
\label{eqembedding2}
\end{corollary}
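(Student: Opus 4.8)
The plan is to establish the roundness inequality $||E_{x\mu}(\mu)||_{g(L,x)}\le||\mu||_L$ first — this is the substance — and then read off the embedding statement from Theorem \ref{ej}. For the roundness bound I would follow the proof of the first assertion of Corollary \ref{eqembedding} (the generalization of \cite[Corollary 4.13]{EMM1}), substituting the contraction estimate of Theorem \ref{earthquake bounds} for the expansion estimate used there. Fix a geodesic segment $\beta$ of length $g(L,x)$ in the target copy of $\Hp$, with endpoints $a,b$, which we may take off the leaves of $E_{x\mu}(\mu)$ carrying atoms. Let $\ell_1',\ell_2'$ be the two extreme leaves of $E_{x\mu}(\mu)$ separating $a$ from $b$, and $\ell_i=E_{x\mu}^{-1}(\ell_i')$ the corresponding leaves of $\mu$. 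Since $E_{x\mu}$ pushes $\mu$ forward to $E_{x\mu}(\mu)$ and is a homeomorphism off the measure-zero set of atomic leaves, a leaf of $\mu$ separates $E_{x\mu}^{-1}(a)$ from $E_{x\mu}^{-1}(b)$ exactly when its image separates $a$ from $b$; hence $i(E_{x\mu}(\mu),\beta)$ equals the $\mu$-measure of the leaves lying between $\ell_1$ and $\ell_2$, which is $i(\mu,\delta)$ for $\delta$ the common perpendicular of $\ell_1$ and $\ell_2$, and $\ell(\delta)=d(\ell_1,\ell_2)$. Because $\ell_1'$ and $\ell_2'$ both cross $\beta$, we also have $d(\ell_1',\ell_2')\le\ell(\beta)=g(L,x)$.

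I would then bound $d(\ell_1,\ell_2)$ using Theorem \ref{earthquake bounds} applied to the leaves $\ell_1,\ell_2$ of $x\mu$ and the segment $\delta$. Writing $m=i(E_{x\mu}(\mu),\beta)=i(\mu,\delta)$, so that $i(\delta,x\mu)=|x|\,m$, the ``furthermore'' (reverse) inequalities give
$$d(\ell_1,\ell_2)\le e^{|x|m/2}\,d(\ell_1',\ell_2')\le e^{|x|m/2}\,g(L,x),\qquad \sinh d(\ell_1,\ell_2)\le e^{|x|m}\sinh g(L,x).$$
The function $g$ is defined precisely so that, together with the simplification $g(L,x)=Le^{-|x|/2}$ furnished by Lemma \ref{g is simple} when $2\tanh L>L$ (and an analogous reduction in the remaining range), these force $d(\ell_1,\ell_2)\le L$; then $m=i(\mu,\delta)\le||\mu||_{d(\ell_1,\ell_2)}\le||\mu||_L$ by monotonicity of roundness in the length, and taking the supremum over $\beta$ yields $||E_{x\mu}(\mu)||_{g(L,x)}\le||\mu||_L$. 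The one subtlety is that the exponent $|x|m$ contains the quantity being estimated; as in \cite{EMM1} this is resolved by passing to $M=||E_{x\mu}(\mu)||_{g(L,x)}$, using that every admissible $\beta$ has $m\le M$, and invoking continuity of $L'\mapsto||\mu||_{L'}$.

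The embedding statement is then a formal consequence. If $|y|<G(g(L,x))/||\mu||_L$, then by linearity of roundness and the bound just proved,
$$||yE_{x\mu}(\mu)||_{g(L,x)}=|y|\,||E_{x\mu}(\mu)||_{g(L,x)}\le|y|\,||\mu||_L<G(g(L,x)),$$
so Theorem \ref{ej}, applied to the measured lamination $yE_{x\mu}(\mu)$ with roundness scale $g(L,x)$, shows that $P_{yE_{x\mu}}$ is a bilipschitz embedding which extends continuously to $\hat P_{yE_{x\mu}}:\Hp\cup\Sp^1\to\Ht\cup\rs$ with $\hat P_{yE_{x\mu}}(\Sp^1)$ a quasi-circle. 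Since the earthquake $E_{x\mu}$ extends to a homeomorphism of $\Sp^1$, the complex earthquake $\C E_z=P_{yE_{x\mu}}\circ E_{x\mu}$ extends on $\Sp^1$ to $\hat P_{yE_{x\mu}}|_{\Sp^1}\circ E_{x\mu}|_{\Sp^1}$, a composition of a circle homeomorphism with a homeomorphism onto a quasi-circle, hence an embedding of $\Sp^1$ into $\rs$.

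I expect the crux to be the estimate in the second paragraph: arranging that the contraction inequalities of Theorem \ref{earthquake bounds} close up to the clean bound $d(\ell_1,\ell_2)\le L$ — which is precisely what forces the definition of $g$ — and disentangling the self-referential occurrence of $m=i(E_{x\mu}(\mu),\beta)$ in the exponent. The identity $i(E_{x\mu}(\mu),\beta)=i(\mu,\delta)$, the passage through Theorem \ref{ej}, and the behaviour of the boundary extension are all routine once the scheme of \cite{EMM1} is set up.
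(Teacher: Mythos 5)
Your proposal is correct and follows essentially the same route as the paper: establish the roundness inequality by pulling a short arc in the target back through the earthquake via the ``reverse'' contraction estimates of Theorem \ref{earthquake bounds} (the paper packages this as the inequality $||E_{x\mu}(\mu)||_A \le ||\mu||_{f(A,x)}$ specialized at $A=g(L,x)$, whereas you work directly at the scale $g(L,x)$ with the common perpendicular $\delta$, but the geometric content is identical), and then feed the resulting bound into Theorem \ref{ej} and use that $E_{x\mu}$ extends to a circle homeomorphism. The self-referential occurrence of $m$ in the exponent that you flag is present in the paper's appeal to Theorem \ref{earthquake bounds} as well and is resolved exactly as you indicate, by deferring to the argument of \cite[Corollary 4.13]{EMM1}.
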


\noindent
{\em Proofs:}
The proofs of  Corollaries \ref{eqembedding} and \ref{eqembedding2} both follow the same outline as the proof
of \cite[Corollary 4.13]{EMM1}. 
Let $\mu$ be a measured lamination on $\Hp$, $z=x+iy\in\mathbb C$,  and $L>0$.

Suppose that  $A>0$ and  that $\alpha$ is an open geodesic arc in $\Hp$ of length $A$ which is
transverse to $E_{x\mu}(\mu)$. Theorem \ref{earthquake bounds} guarantees that one can choose
an open geodesic arc $\beta$ in $\Hp$ which intersects exactly the leaves of $\mu$ which correspond
to leaves of $E_{x\mu}$ which intersect $\alpha$ and has total length at most $f(A,x)$.
Therefore,
$$i(\alpha,E_{x\mu}(\mu))=i(\beta,\mu)\le ||\mu||_{f(A,x)},$$
so
\begin{equation}
||E_{x\mu}(\mu)||_A \leq ||\mu||_{f(A,x)}.
\label{norm bound}
\end{equation}

We begin with the proof of Corollary \ref{eqembedding2}. Inequality \ref{norm bound}
immediately implies that 
$$||E_{x\mu}(\mu)||_{g(L,x)} \leq ||\mu||_{f(g(L,x))}=||\mu||_L.$$
So, if
$$|y| < \frac{G(g(L,x))}{||\mu||_L},$$
then 
$$||y \ E_{x\mu}||_{g(L,x)} <  G(g(L,x)).$$ 
Theorem \ref{ej} then implies that  $P_{yE_{x\mu}}$ is a bilipschitz embedding which extends to an
embedding of $\Sp^1$ into $\rs$. Since $E_{x\mu}$ extends to  a homeomorphism of $\Sp^1$, it follows
that $\C E_z$ extends to an
embedding of $\Sp^1$ into $\rs$. 
This completes the proof of Corollary \ref{eqembedding2}.

We now turn to the proof of Corollary  \ref{eqembedding}.
We can divide a half open  geodesic arc in $\Hp$ of length $f(L,x)$
into  $\lceil f(L,x)/L\rceil$ half open  geodesic arcs of length less than or equal  to $L$, so
$$||E_{x\mu}(\mu)||_L \leq ||\mu||_{f(L,x)} \leq \left\lceil \frac{f(L,x)}{L}\right\rceil ||\mu||_L.$$
Therefore, if 
$$|y| <  \frac{G(L)}{\left\lceil{\frac{f(L,x)}{L}}\right\rceil ||\mu||_L},$$
then 
$$||y\ E_{x\mu}(\mu)||_L < G(L).$$
and we may again use Theorem \ref{ej} to complete the proof of Corollary \ref{eqembedding}.
\eproof

For all $L>0$, we define
$$Q(L,x) = \max\left( \frac{G(L)}{\left\lceil{\frac{f(L,x)}{L}}\right\rceil}, G(g(L,x))\right)$$
and
$${\mathcal T}^L_0 ={\rm int}( \left\{ x+iy \ | \ |y|  < Q(L,x)\right\}.$$

The following theorem is a direct generalization of Theorem 4.14 in Epstein-Marden-Markovic \cite{EMM1}.
In its proof, we simply replace their use of Corollary 4.13 in \cite{EMM1} with our Corollaries
\ref{eqembedding} and \ref{eqembedding2}.

\begin{theorem}
Suppose that $L>0$ and $\mu$ is a measured lamination on $\Hp$  such that $||\mu||_L= 1$.
Then, for $z\in {\mathcal T}_0^L$, 
\begin{enumerate}
\item
$\C E_z$ extends to an embedding $\phi_z:\Sp^1\to \rs$ which bounds a region $\Omega_z$. 
\item 
There is a  quasiconformal map
$\Phi_z:\D^2 \rightarrow \Omega_z$ with domain the unit disk  and quasiconformal dilatation $K_z$ bounded  by 
$$K_z \leq \frac{1 + |h(z)|}{1-|h(z)|}$$
where $h:{\mathcal T}_0 \rightarrow \D^2$ is a Riemann map taking $0$ to $0$.

Moreover, $\Phi_z\cup\phi_z:\D^2\cup \Sp^1\to \rs$ is continuous.

\item If $G$ is a group of M\"obius transformations preserving $\mu$, then 
$\Phi_z$ can be chosen so that there is a homomorphism $\rho_z:G \to G_z$ where 
$G_z$ is also a group of M\"obius transformations and 
$$\Phi_z \circ g = \rho_z(g)\circ \Phi_z$$
for all $g\in G$.
\end{enumerate}
\label{holo motion}
\end{theorem}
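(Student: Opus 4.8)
The plan is to run the argument of Epstein--Marden--Markovic for their Theorem 4.14 in \cite{EMM1} essentially verbatim, using the holomorphic motions / $\lambda$-lemma machinery, but feeding in our wider domain ${\mathcal T}_0^L$ in place of their domain. The strategy has three stages, corresponding to the three conclusions. First I would establish part (1): for $z\in{\mathcal T}_0^L$ we have $|y|<Q(L,x)=\max\big(G(L)/\lceil f(L,x)/L\rceil,\ G(g(L,x))\big)$, so at least one of the two hypotheses ``$|y|<G(L)/(\lceil f(L,x)/L\rceil\,||\mu||_L)$'' of Corollary \ref{eqembedding} or ``$|y|<G(g(L,x))/||\mu||_L$'' of Corollary \ref{eqembedding2} holds (here we use the normalization $||\mu||_L=1$). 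Either corollary gives that $\C E_z$ extends to an embedding $\phi_z:\Sp^1\to\rs$; being an embedded circle, $\phi_z(\Sp^1)$ is a Jordan curve and hence bounds a Jordan domain $\Omega_z$. One should also record that $\C E_0$ is the identity inclusion, so $\Omega_0=\Hp$ (or the appropriate round disk), which is needed to normalize the Riemann map at $0$.

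Second, for part (2), the key point is that the family $z\mapsto\phi_z$ is a \emph{holomorphic motion} of $\Sp^1$ parametrized by ${\mathcal T}_0^L$: the maps $\C E_z=P_{yE_{x\mu}}\circ E_{x\mu}$ depend on $z=x+iy$ holomorphically in the sense required (this is exactly the content of the complex-earthquake formalism of \cite[Chapter 3]{EM87} and \cite[Section 3]{EMM1}, where the dependence on the complex parameter is built in), they are injective on $\Sp^1$ by part (1), and at $z=0$ the map is the identity. By the (extended, Slodkowski) $\lambda$-lemma, this holomorphic motion of $\Sp^1$ extends to a holomorphic motion of all of $\rs$, and in particular to a holomorphic motion $\Phi_z:\D^2\to\Omega_z$ of the disk, with $\Phi_z\cup\phi_z$ continuous on $\overline{\D^2}$. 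The standard quasiconformal estimate for holomorphic motions then gives $K_z\le (1+|h(z)|)/(1-|h(z)|)$ where $h:{\mathcal T}_0\to\D^2$ is the Riemann map with $h(0)=0$: one composes the motion with $h^{-1}$ to reduce to a motion over the disk, then invokes the known bound $K\le(1+|\zeta|)/(1-|\zeta|)$ for holomorphic motions over $\D^2$ evaluated at the parameter $\zeta=h(z)$.

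Third, for part (3), conformal naturality: if $G$ is a group of M\"obius transformations preserving $\mu$, then each $g\in G$ conjugates the complex earthquake data compatibly, so $g$ descends to a M\"obius transformation $\rho_z(g)$ with $\phi_z\circ g=\rho_z(g)\circ\phi_z$ on $\Sp^1$; the assignment $g\mapsto\rho_z(g)$ is a homomorphism onto a M\"obius group $G_z$. To promote this equivariance from $\Sp^1$ to the disk one uses the conformally natural (barycentric) choice of extension in the $\lambda$-lemma — equivalently, one averages/normalizes the extension as in \cite{EMM1} — so that $\Phi_z\circ g=\rho_z(g)\circ\Phi_z$ on all of $\D^2$.

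The main obstacle, and the only place where genuine checking beyond citation is needed, is verifying that $z\mapsto\C E_z$ is honestly a holomorphic motion over the \emph{open} set ${\mathcal T}_0^L={\rm int}\{x+iy:|y|<Q(L,x)\}$ — i.e. that $Q(L,x)$ is such that the injectivity supplied by Corollaries \ref{eqembedding} and \ref{eqembedding2} persists on an open neighborhood of each point, and that the holomorphic dependence on $z$ (not just separate continuity in $x$ and $y$) holds throughout. Since $Q(L,x)$ is defined as a max of two functions each of which is, respectively, the bound from one of the two corollaries, and since ${\mathcal T}_0^L$ is the interior of the union of the two corresponding regions, this is a routine but slightly delicate point-set argument; once it is in place, everything else is the EMM proof of \cite[Theorem 4.14]{EMM1} with ${\mathcal T}_0^L$ substituted for their region and our corollaries substituted for their Corollary 4.13. \eproof
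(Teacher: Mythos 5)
Your proposal is correct and follows essentially the same route as the paper: the paper itself presents this as a direct generalization of \cite[Theorem 4.14]{EMM1} and states that its proof is obtained by simply replacing the use of \cite[Corollary 4.13]{EMM1} with Corollaries \ref{eqembedding} and \ref{eqembedding2} to get injectivity over the enlarged domain $\mathcal{T}_0^L$, after which the holomorphic motions / equivariant $\lambda$-lemma machinery applies verbatim, exactly as you outline.
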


Epstein, Marden and Markovic \cite{EMM2} introduce the theory of complex angle scaling maps
and use them to produce a family of quasiregular mappings indexed by
$${\mathcal S}^L = {\rm int}\left\{ x+iy\in\mathbb C \ | \ y > -\frac{0.73}{f(1,x)}\right\}$$
so that if $|{\rm Im}(t)|< \frac{0.73}{f(1,x)}$, then $\Phi_t$ is quasiconformal.
(See also the discussion in \cite[Section 3.4]{BC13}.)

We consider the enlarged region
$${\mathcal T}^L = {\rm int}\left\{ x+iy\in\mathbb C \ | \ y > -Q(x,L)\right\}.$$
Given Theorems \ref{ej} and \ref{holo motion}, their proof of Theorem 4.3 extends immediately to give:

\begin{theorem}{\rm (\cite[Theorem 4.13]{EMM2})}
Suppose that $L>0$, $\mu$ is a measured lamination on $\Hp$ with $||\mu||_L  = 1$,
$v_0>0$ and $t_0 = iv_0 \in {\mathcal T}^L_0$. If $t\in {\mathcal T}^L_0$, let
$\Omega_t$ be the the image of $\D^2$  under the map $\Phi_{t}$ given by Theorem \ref{holo motion}.
Then there exists a continuous map
$\Psi:\U \times \Omega_{t_0} \rightarrow \rs$, where $\U$ is the upper half-plane, such that
\begin{enumerate}
\item $\Psi_{t_0} = id$.
\item For each $z \in \Omega_{t_0}$, $\Psi(t,z)$ depends holomorphically on $t$.
\item For each $t \in {\mathcal T}^L_0$, $\Psi_t$ can be continuously extended to $\partial \Omega_{t_0}$ such that 
$$\left.\Psi_t \circ \Phi_{t_0}\right|_{\Sp^1} = \left.\Phi_t\right|_{\Sp^1}.$$
In particular $\Psi_0: \partial\Omega_{t_0} \rightarrow \Sp^1$ and $\Phi_{t_0}: \Sp^1 \rightarrow \partial \Omega_0$ are inverse homeomorphisms.
\item If $t \in {\mathcal T}^L_0$ and ${\rm Im}(t)>0$, then  $\Psi_t$ is injective and
\hbox{$\Psi_t(\Omega_0)  =\Phi_t(\D^2) = \Omega_t$.}
\item  If $t = u+iv$ and $v>0$, then $\Psi_t$ is locally injective $K_t$-quasiregular mapping where
$$K_t - \frac{1+|\kappa(t)|}{1-|\kappa(t)|}, \qquad |\kappa(t)| = \frac{\sqrt{u^2 +(v-v_0)^2}}{\sqrt{u^2 +(v+v_0)^2}}$$
\item
If $G$ is a group of M\"obius transformations preserving $\Omega_0$,
then there is a homomorphism $\rho_t:G\to  G_t$ where 
$G_t$ is also a group of M\"obius transformations, such that 
$$\Psi_t \circ g = \rho_t(g)\circ \Psi_t$$
for all $g\in G$.

\end{enumerate}
\label{analytic continuation}
\end{theorem}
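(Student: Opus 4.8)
The plan is to follow the proof of the corresponding theorem of \cite{EMM2} essentially line by line, making only two substitutions: our Theorem \ref{ej} in place of the embeddedness criterion used there, and our Theorem \ref{holo motion} in place of \cite[Theorem 4.14]{EMM1}. Since each of our two inputs is valid on a parameter region at least as large as the corresponding region available to \EMM, every step of their argument will carry over unchanged, and the sole effect of the substitution is to replace their region (cut out by $0.73/f(1,x)$) by the larger region defined via $Q(L,x)$.

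First I would recall the construction of the complex angle scaling maps. With $t_0=iv_0$ one has $\C E_{t_0}=P_{v_0\mu}$, which realizes ${\rm Dome}(\Omega_{t_0})$, and Theorem \ref{holo motion} supplies the quasiconformal parametrization $\Phi_{t_0}:\D^2\to\Omega_{t_0}$. For $t\in\U$ one builds $\Psi_t:\Omega_{t_0}\to\rs$ by using the nearest point retraction of $\Omega_{t_0}$ onto its dome to record, for each $w$, the flat or leaf of $v_0\mu$ that is met, the position of the support plane along the corresponding bend, and the normal distance, and then reconstituting the point out of the complex shearing-and-bending data of $\C E_t$ in place of that of $\C E_{t_0}$; when ${\rm Re}(t)\ne 0$ one first passes to the earthquaked lamination $E_{x\mu}(\mu)$, whose relevant roundness is exactly what Corollaries \ref{eqembedding} and \ref{eqembedding2} control. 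Because this data is a ${\rm PSL}_2(\C)$-valued cocycle depending holomorphically on the complex parameter, $\Psi(t,w)$ will be holomorphic in $t$ for each fixed $w$, and $\Psi_{t_0}=id$ by construction, which gives items (1) and (2); the equivariance in item (6) will follow from the naturality of the construction together with item (3) of Theorem \ref{holo motion}.

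I would then turn to the boundary behaviour, which is where the improved bounds enter. The region defined by $Q(L,x)$ is, by construction, precisely the set of parameters $t$ for which Corollary \ref{eqembedding} or Corollary \ref{eqembedding2} --- and hence ultimately the $L$-roundness bound $G(L)$ of Theorem \ref{ej} --- forces $\C E_t$ to extend to an embedding of $\Sp^1$ bounding $\Omega_t$ with a controlled quasiconformal parametrization $\Phi_t$. As in \cite{EMM2}, this is what makes $\Psi_t$ extend continuously to $\partial\Omega_{t_0}$ with $\Psi_t\circ\Phi_{t_0}|_{\Sp^1}=\Phi_t|_{\Sp^1}$, giving item (3), and specializing to $t=0$ identifies $\Psi_0$ and $\Phi_{t_0}$ on $\Sp^1$ as inverse homeomorphisms. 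When in addition ${\rm Im}(t)>0$, the complex earthquakes along the path from $t_0$ to $t$ remain admissible, so $\Psi_t$ is injective with the stated image, which is item (4). For item (5), a direct computation with the angle scaling map, carried out exactly as in \cite{EMM2}, bounds the modulus of its Beltrami coefficient by $|\kappa(t)|=|(t-iv_0)/(t+iv_0)|=\sqrt{u^2+(v-v_0)^2}\,/\,\sqrt{u^2+(v+v_0)^2}$ (equivalently, this is the Schwarz--Pick estimate for the holomorphic family $t\mapsto\Psi_t$ normalized by $\Psi_{t_0}=id$), so that $\Psi_t$ is $K_t$-quasiregular with $K_t\le(1+|\kappa(t)|)/(1-|\kappa(t)|)$; local injectivity of $\Psi_t$ follows since it is holomorphic in $t$ and $\Psi_{t_0}=id$ is a local homeomorphism.

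The hard part will be bookkeeping rather than any new idea. I would need to verify carefully that the admissible parameter set produced by the improved estimates is exactly the region appearing in the statement, so that the angle scaling maps and their boundary extensions really are defined on all of $\U\times\Omega_{t_0}$ with injectivity on the stated sub-region; and I would need to check that passing from finite-leaved laminations --- where Corollary \ref{finite ej} supplies uniform bilipschitz constants --- to a general $\mu$ via the approximation of \cite[Theorem III.3.11.9]{EM87} preserves both the holomorphic dependence on $t$ and the continuous extension to the boundary circle. Both points are handled exactly as in \cite{EMM2}, now with the sharper constants coming from Theorems \ref{ej} and \ref{holo motion}.
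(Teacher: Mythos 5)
Your proposal is correct and takes essentially the same approach as the paper: the paper's own treatment consists of exactly the observation that once Theorem \ref{ej} and Theorem \ref{holo motion} are in place (enlarging ${\mathcal S}^L$ to ${\mathcal T}^L$), the argument of Epstein--Marden--Markovic for \cite[Theorem 4.13]{EMM2} carries over verbatim, and your step-by-step account of why each of items (1)--(6) survives the substitution is consistent with that. The extra detail you supply on the angle-scaling construction, the Schwarz--Pick bound for $\kappa(t)$, and the finite-leaved approximation is accurate but not part of the paper's exposition, which simply defers to \cite{EMM2}.
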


\section{Quasiconfomal bounds}

One can now readily adapt the techniques of proof of  Epstein-Marden-Markovic \cite[Theorem 6.11]{EMM2}
to establish:

\begin{theorem}
If $\Omega$ is a simply connected hyperbolic domain in $\rs$ and $L>0$, then 
there is a conformally natural $K$-quasiconformal map $f:\Omega\to\chb$ which extends to the identity on 
$\partial \Omega\subset\rs$ such that
$$\log(K) \leq d_{{\mathcal T}^L}(ic_1(L),0)$$
where $d_{{\mathcal T}^L}$ is the Poincar\'e metric on the domain ${\mathcal T}^L$ and
\hbox{$c_1(L) = 2 \cos^{-1}\left(-\sinh\left(\frac{L}{2}\right)\right)$.}
\label{qc bound}
\end{theorem}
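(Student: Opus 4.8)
The plan is to produce $f$ as (the inverse of) a single member of the holomorphic family of maps supplied by the complex earthquake machinery above, and to bound its quasiconformal dilatation by the Schwarz--Pick inequality for holomorphic families of Beltrami coefficients. By Lemma~\ref{pleated plane and dome} and the simple connectivity of $\Omega$, write $\chb=P_\mu(\Hp)$ with $P_\mu\colon\Hp\to\chb$ an isometry, for some measured lamination $\mu$ on $\Hp$. If $\mu=0$ then $\chb$ is totally geodesic and $f$ may be taken to be a M\"obius map, so the asserted bound holds with $K=1$; hence assume $\mu\ne 0$ and put $m=||\mu||_L$. Theorem~\ref{bendboundbetter} (together with Theorem~\ref{bendbound} when $L=2\sinh^{-1}(1)$) gives $0<m\le c_1(L)$; in particular one need only treat $L\le 2\sinh^{-1}(1)$, since otherwise $c_1(L)$ is not defined. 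Rescale by setting $\nu=\mu/m$, so that $||\nu||_L=1$, and apply Theorems~\ref{holo motion} and~\ref{analytic continuation} to $\nu$. In this normalization the complex earthquake $\C E_{im}$ of $\nu$ is exactly the pleated plane $P_{m\nu}=P_\mu$ with image $\chb$, so the region $\Omega_{im}$ it bounds at infinity is $\Omega$, its boundary extension $\phi_{im}$ is $\hat P_\mu|_{\Sp^1}$, and $\C E_0$ is the inclusion, with $\Omega_0$ the round disk which we identify with $\D^2\cong\Hp$.

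Fix a base point $t_0=iv_0\in\mathcal T^L_0\cap\U$; let $\Psi\colon\U\times\Omega_{t_0}\to\rs$ be the map of Theorem~\ref{analytic continuation} (so $\Psi_{t_0}=\mathrm{id}$, $\Psi_t$ depends holomorphically on $t\in\mathcal T^L$, and $(\Psi_t\circ\Phi_{t_0})|_{\Sp^1}=\phi_t$), and let $\Phi_{t_0}\colon\D^2\to\Omega_{t_0}$ be the quasiconformal map of Theorem~\ref{holo motion}. The continuous extension of $\Psi_0$ to $\partial\Omega_{t_0}$ is inverse to $\Phi_{t_0}|_{\Sp^1}$, so $\Psi_0\circ\Phi_{t_0}$ is a quasiconformal self-homeomorphism of $\D^2$ fixing $\Sp^1$ pointwise; set
$$\Xi_t=(\Psi_t\circ\Phi_{t_0})\circ(\Psi_0\circ\Phi_{t_0})^{-1}\colon\D^2\to\rs.$$
Then $\Xi_0=\mathrm{id}$, $\Xi_t$ depends holomorphically on $t\in\mathcal T^L$ (precomposition by a fixed quasiconformal map preserves holomorphy of the Beltrami coefficients), and $\Xi_t|_{\Sp^1}=\phi_t$. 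In particular $\Xi_{im}$ has cluster set on $\Sp^1$ the homeomorphism $\hat P_\mu|_{\Sp^1}$ onto the Jordan curve $\partial\Omega$; since $\Xi_{im}$ is a locally injective quasiregular map (Theorem~\ref{analytic continuation}(5)) realizing a degree-one boundary map onto $\partial\Omega$, a degree argument shows it is a quasiconformal homeomorphism $\D^2\to\Omega$. Put $f=P_\mu\circ\Xi_{im}^{-1}\colon\Omega\to\chb$. On $\partial\Omega$ this equals $\hat P_\mu\circ(\hat P_\mu|_{\Sp^1})^{-1}=\mathrm{id}$, and $f$ is conformally natural because each factor is equivariant under the group of M\"obius transformations preserving $\mu$: $P_\mu$ by Thurston's construction, and $\Phi_{t_0},\Psi_t$ by Theorem~\ref{holo motion}(3) and Theorem~\ref{analytic continuation}(6).

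To bound $K(f)=K(\Xi_{im}^{-1})=K(\Xi_{im})$, regard $t\mapsto\kappa_t$, the Beltrami coefficient of $\Xi_t$, as a holomorphic map from $\mathcal T^L$ into the open unit ball $B$ of $L^\infty(\D^2)$, with $\kappa_0=0$. Since $\mathcal T^L$ is a hyperbolic domain, its Kobayashi metric coincides with its Poincar\'e metric $d_{\mathcal T^L}$; distance-decrease of the Kobayashi metric under $t\mapsto\kappa_t$, together with the standard computation that the Kobayashi distance in $B$ from $0$ to $\kappa$ equals $\log\frac{1+||\kappa||_\infty}{1-||\kappa||_\infty}$, gives
$$\log K(f)=\log\frac{1+||\kappa_{im}||_\infty}{1-||\kappa_{im}||_\infty}\le d_{\mathcal T^L}(0,im).$$
Finally, $Q(L,x)$ depends on $x$ only through $|x|$, so the reflection $x+iy\mapsto-x+iy$ is an isometry of $\mathcal T^L$ fixing the imaginary axis pointwise; hence the imaginary axis is a geodesic of $\mathcal T^L$, and since $0$, $im$ and $ic_1(L)$ lie on it with $0<m\le c_1(L)$ we obtain $d_{\mathcal T^L}(0,im)\le d_{\mathcal T^L}(0,ic_1(L))=d_{\mathcal T^L}(ic_1(L),0)$, completing the proof.

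I expect the main obstacle to be the construction in the second step: renormalizing the analytically continued family so that the member at $t=0$ is conformal while retaining, at the dome parameter $t=im$, a genuine quasiconformal homeomorphism onto $\Omega$ (the degree argument, which uses that $\chb$ is an embedded dome), and the standard but technical point that the Beltrami coefficients depend holomorphically on $t$ throughout $\mathcal T^L$, including along the real axis where the maps are only defined by continuous extension. All of the geometric content --- the roundness estimate $m\le c_1(L)$, the embeddedness criterion of Theorem~\ref{ej}, and the existence and holomorphic dependence of the complex earthquake family --- is quoted from the preceding sections.
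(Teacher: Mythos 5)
Your overall strategy mirrors the paper's, but the step you flag at the end as ``standard but technical'' --- that the Beltrami coefficients $\kappa_t$ of $\Xi_t$ depend holomorphically on $t$ throughout $\mathcal{T}^L$ --- is a genuine gap, and it is precisely the point the paper handles differently. The family $\Psi_t$ of Theorem~\ref{analytic continuation} is defined only for $t$ in the open upper half-plane $\U$; the point $t=0$, where you need $\kappa_0=0$, lies on $\partial\U$, and $\Psi_0$ is furnished only as a boundary homeomorphism $\partial\Omega_{t_0}\to\Sp^1$, not as a quasiconformal map of the interior, so $(\Psi_0\circ\Phi_{t_0})^{-1}$ is not a legitimate precomposition on $\D^2$. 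Worse, the part of $\mathcal{T}^L$ below the real axis must be reached via $\Phi_t$ (for $t\in\mathcal{T}^L_0$), and $\Phi_t$ and $\Psi_t\circ\Phi_{t_0}$ agree only in their boundary values, not in their Beltrami coefficients on $\D^2$. There is therefore no single holomorphic family of Beltrami coefficients over $\mathcal{T}^L$ extending your $\kappa_t$ across $\mathbb{R}$, and the Schwarz--Pick estimate against the unit ball of $L^\infty(\D^2)$ cannot be run. Note also that $im$ generally lies well outside $\mathcal{T}^L_0$ (since $c_1(L)>G(L)$), so you cannot retreat to the region where $\Phi_t$ alone is available.

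The paper avoids this by replacing the unit ball of $L^\infty$ with universal Teichm\"uller space $\mathcal{U}(\Gamma)$. Since $\Phi_t$ and $\Psi_t\circ\Phi_{t_0}$ have the same quasisymmetric boundary values, the maps $F(t)=qs(\Phi_t)$ on $\mathcal{T}^L_0$ and $G(t)=qs(\Psi_t\circ\Phi_{t_0})$ on $\U$ agree on the overlap and glue to $\bar F:\mathcal{T}^L\to\mathcal{U}(\Gamma)$; the holomorphy of $\bar F$ is a nontrivial theorem (\cite[Theorem 6.5 and Proposition 6.9]{EMM2}) which the paper cites rather than reproves. Applying the Kobayashi/Teichm\"uller metric of $\mathcal{U}(\Gamma)$ to $\bar F$ gives $d_{\mathcal{U}(\Gamma)}(\bar F(ic),\bar F(0))\le d_{\mathcal{T}^L}(ic,0)$, and $f$ is then extracted as an extremal quasiconformal representative of the class $\bar F(ic)$ rather than as $\Xi_{ic}^{-1}$. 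Your auxiliary points --- the reduction $m\le c_1(L)$ via Theorem~\ref{bendboundbetter}, the geodesicity of the imaginary axis in $\mathcal{T}^L$, and the degree argument making $\Xi_{im}$ a homeomorphism onto $\Omega$ --- are all correct; only the holomorphic-family step needs the Teichm\"uller-space formulation.
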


We offer a brief sketch of the proof in order to indicate where our new bounds, 
as given in Theorems \ref{bendboundbetter}, \ref{holo motion} and \ref{analytic continuation},
are used in the argument.

We recall that universal Teichm\"uller space ${\mathcal U}$ is the space of 
quasisymmetric homeomorphisms of the unit ciricle $\Sp^1$, modulo the action of M\"obius transformations
by post-composition (see, for example, Ahlfors \cite[Chapter VI]{ahlfors}.  
The Teichm\"uller metric on the space $\mathcal{U}$ is defined by
$$d_{\mathcal U}(f,g) =   \log \inf K(\hat{f}^{-1}\circ \hat{g})$$ 
where the infimum is over all quasiconformal extensions $\hat f$ and $\hat g$  of $f$ and $g$ to maps from the unit disk to
itself and  $K(\hat{f}^{-1}\circ \hat{g})$ is the quasiconformal dilatation of $\hat{f}^{-1}\circ\hat g$.
If $\Gamma$ is a  group of conformal automorphisms of $\D^2$, we define ${\mathcal U}(\Gamma) \subseteq {\mathcal U}$ to be
the quasisymmetric  homeomorphisms which conjugate the action of $\Gamma$ to the action of an isomorphic group
of conformal automorphisms. The Teichm\"uller metric on $\mathcal{U}(\Gamma)$ is defined similarly by considering extensions
which conjugate $\Gamma$ to a group of conformal automorphisms.

Let $g: \D^2 \rightarrow \rs$ be a locally injective quasiregular map, i.e. $g = h\circ f$ where $f$ is a quasiconformal
homeomorphism and $h$ is locally injective and holomorphic on the image of $f$. We may define a complex structure
$C_g$ on $\D^2$ by pulling back the complex structure on $\rs$ via $g$. The identity map defines a quasiconformal 
homeomorphism $\hat{g}: \D^2 \rightarrow C_g$. We then uniformize $C_g$ by a conformal map $R: C_g \rightarrow \D^2$ 
and consider the quasiconformal map $R\circ \hat{g}: \D^2 \rightarrow \D^2$. This map extends to the boundary to 
give a quasisymmetric map $qs(g):\Sp^1 \rightarrow \Sp^1$.

Choose $\mu$ so that $\chb=P_{c\mu}(\D^2)$ where $||\mu||_L=1$ and $c>0$.
We use Theorem \ref{holo motion} to define a map 
$$F: {\mathcal T}_0^L \rightarrow {\mathcal U}(\Gamma),$$
where $\Gamma$ is the group of conformal automorphisms of $\Hp$ preserving $\mu$.  
If $t \in {\mathcal T}_0^L$, let  
$$F(t) = qs(\Phi_t).$$

Similarly, we may use Theorem \ref{analytic continuation}, with some choice of $t_0=iv_0\in{\mathcal{T}}^L_0$, to define a map
$$G: \U\rightarrow {\mathcal U}(\Gamma)$$
by letting  
$$G(t) = qs(\Psi_t\circ\Phi_{t_0}).$$

If $t$ lies in the intersection of the domains of $F$ and $G$, then even
though $\Phi_t$ and $\Psi_t\circ\Phi_{t_0}$ need not agree on $\D^2$, Theorem \ref{analytic continuation}
implies that they have the same boundary values and quasi-disk image $\Omega_t$.
Therefore $F$ and $G$ agree on the overlap $\mathcal{T}^L_0\cap\U$ of their domains.
We may combine the functions to obtain
a well-defined function 
$$\bar F:{\mathcal T}^L \rightarrow {\mathcal U}(\Gamma).$$
Epstein, Marden, Markovic further show that $\bar F$ is holomorphic  (see \cite[Theorem 6.5 and Proposition 6.9]{EMM2}).

The Kobayashi metric on a complex manifold $M$ is  defined to be the largest metric on $M$ with the property
that for any holomorphic map $f:\D^2 \rightarrow M$, $f$ is 1-Lipschitz with respect the hyperbolic metric on $\D^2$. 
Therefore, holomorphic maps between complex manifolds are 1-Lipschitz with respect to  their Kobayashi metrics.
The Teichm\"uller metric agrees with the Kobayashi metric  on $\mathcal{U}$ and $\mathcal{U}(\Gamma)$ 
(see \cite[Chapter 7]{GarLak}). Morever, the Poincar\'e metric on any simply connected domain,
in particular $\mathcal{T}^L$, agrees with its Kobayashi metric.
 It follows then that for any $t \in {\mathcal T}^L$,
$$d_{\mathcal{U}(\Gamma)}(\bar F(t), \bar F(0)) \leq d_{{\mathcal T}^L} (t,0).$$ 
Theorem \ref{bendboundbetter} implies that
$$c  \leq c_1(L) = 2 \cos^{-1}\left(-\sinh\left(\frac{L}{2}\right)\right)$$
so
$$d_{\mathcal{U}(\Gamma)}(\bar F(ic), \bar F(0)) \leq d_{{\mathcal T}^L} (ic,0) \leq d_{{\mathcal T}^L} (ic_1(L),0).$$

Since $\C E_{ic}=P_{c\mu}$ and $\Omega=\Omega_{ic}$ is simply connected, the map $g_{ic}=\Psi_{ic}\circ\Phi_{t_0}$
is a conformally natural quasiconformal mapping with image $\Omega$.
Moreover, $P_{c\mu}\circ g_{ic}^{-1}:\Omega\to\chb$ extends to the identity on $\partial \Omega=\partial\chb$.
(For more details, see the discussion in the proofs of \cite[Theorem 6.11]{EMM2} or  \cite[Theorem 1.1]{BC13}.)

We have that $\bar F(ic) = qs(g_{ic}) =\left.(R\circ g_{ic})\right|_{\Sp^1}$ where
$R:\Omega \rightarrow \D^2$ is a uniformization map.
Therefore, 
$$d_{\mathcal{U}(\Gamma)}(\bar F(ic),\bar F(0)) = d_{\mathcal{U}(\Gamma)}(\bar F(ic), Id) = \log \inf K(h)$$
where the infimum is taken over all  quasiconformal maps  from $ \D^2$ to $\D^2$
extending $\left.(R\circ g_{ic})\right|_{\Sp^1}$ and conjugating $\Gamma$ to a group of conformal automorphisms.
By basic compactness results for
families of quasiconformal maps, this infimal quasiconformal dilatation is achieved  by 
a quasiconformal map $h:\D^2\to \D^2$.
If $f: \Omega \rightarrow \D^2$ is given by
$f=h^{-1}\circ R$, then
$$K(f) = K(h)=d_{\mathcal{U}(\Gamma)}(\bar F(ic), \bar F(0))  \leq d_{{\mathcal T}^L} (ic_1(L),0).$$
Since $h$ and $R\circ g_{ic}$ are quasiconformal maps with the same extension to $\partial \Hp$,
they are boundedly homotopic (see, e.g., \cite[Lemma 5.10]{EMM2}). So, $f$ is boundedly homotopic to $g_{ic}^{-1}$.
Thus,  $P_{c\mu}\circ f:\Omega \rightarrow \chb$  is 
boundedly homotopic to $P_{c\mu}\circ g_{ic}^{-1}$. Since $P_{c\mu}\circ g_{ic}^{-1}$  extends to the identity on 
$\partial\Omega$, it follows that $P_{c\mu}\circ f$ also extends to the identity on $\partial\Omega$.
Therefore,  \hbox{$P_{c\mu}\circ f:\Omega \rightarrow \chb$}  is the desired conformally natural
$K$-quasiconformal map which extends to the identity on $\partial \Omega$
such that
$$\log(K) \leq d_{{\mathcal T}^L} (ic_1(L),0).$$ 
This completes the sketch of the proof of Theorem \ref{qc bound}.

\medskip\noindent
{\bf Remark:} Epstein, Marden and Markovic showed that if $\Omega$ is simply connected, then
a quasiconformal map between $\Omega$ and $\chb$ extends to the identity on $\partial \Omega$
if and only if it is boundedly homotopic to the nearest point retraction from $\Omega$ to $\chb$ (see \cite[Theorem 5.9]{EMM2}).

\section{Derivation of main theorem}

In order to complete the proof of our main theorem, Theorem \ref{main}, it suffices to show
that one can choose $L>0$ such that
$$d_{{\mathcal T}^L}(ic_1(L),0)<7.1695.$$
Motivated by computer calculations for various values of $L$, we choose \hbox{$L = 1.48$}.

First, we construct a polygonal approximation for the region $\mathcal{T}^L$ from within,  see figure  \ref{Tapprox}.
\begin{figure}[htbp] 
\centering
   \includegraphics[width=5in]{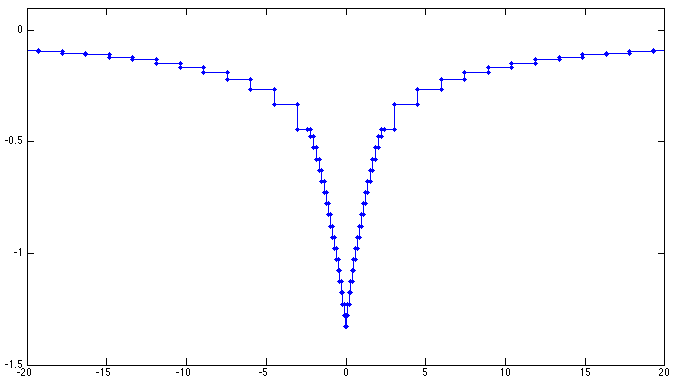} 
   \caption{Polygonal approximation of $\mathcal{T}^L$}
\label{Tapprox}
\end{figure}
The approximation is constructed using MATLAB's Symbolic Math Toolbox and variable precision arithmetic. 
Variable precision arithmetic allows us to compute vertex positions to arbitrary precision. 
In particular, we can deduce sign changes to find intervals containing intersection points.

We  build a step function $s(x) \leq Q(L,x)$ as follows; We recall that
$$Q(L,x) = \max\left( \frac{G(L)}{\left\lceil{\frac{f(L,x)}{L}}\right\rceil}, G(g(L,x))\right).$$
We first locate intervals  where $\frac{G(L)}{\left\lceil{\frac{f(L,x)}{L}}\right\rceil}$ and 
$G(g(L,x))$ intersect. For values where $\frac{G(L)}{\left\lceil{f(L,x)/L}\right\rceil}$ dominates, 
we bound $Q(L,x)$ by truncated decimal expansions (i.e. lower bounds) of values of 
$\frac{G(L)}{\left\lceil{f(L,x)/L}\right\rceil}$, which we compute using variable precision arithmetic.

For parts dominated by $G(g(L,x))$, we simplify our computation by using the following lemma.

\begin{lemma}
\label{g is simple}
Let $L_0 > 0$ be the unique positive solution to $2\tanh(L) = L$. 
If $L < L_0\approx 1.91501$,   then $g(L,x) = L e^{-|x|/2}$. 
\end{lemma}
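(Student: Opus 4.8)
The plan is to reduce the asserted identity $g(L,x)=Le^{-|x|/2}$ to a single one-variable inequality. Recall that $g(L,x)=\max\!\big(Le^{-|x|/2},\,\sinh^{-1}(e^{-|x|}\sinh L)\big)$, so it suffices to show $Le^{-|x|/2}\ge \sinh^{-1}(e^{-|x|}\sinh L)$ for all $x$; applying the increasing function $\sinh$ to both sides, this is equivalent to $\sinh(Le^{-|x|/2})\ge e^{-|x|}\sinh(L)$. I would then substitute $s=e^{-|x|/2}$, which ranges over $(0,1]$ as $x$ ranges over $\R$, so the goal becomes: for all $s\in(0,1]$,
$$\sinh(Ls)\ge s^{2}\sinh(L).$$

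Writing $\psi(s)=\sinh(Ls)/s^{2}$, the target inequality is exactly $\psi(s)\ge \psi(1)=\sinh(L)$ on $(0,1]$, and I would prove it by showing $\psi$ is non-increasing on $(0,1]$. A direct computation gives $\psi'(s)=s^{-3}\big(sL\cosh(Ls)-2\sinh(Ls)\big)$, so it is enough to prove that $g(s):=sL\cosh(Ls)-2\sinh(Ls)$ satisfies $g(s)\le 0$ on $[0,1]$. Here $g(0)=0$ and $g'(s)=L\big(sL\sinh(Ls)-\cosh(Ls)\big)$; introducing $k(u)=u\sinh(u)-\cosh(u)$ one has $k(0)=-1$ and $k'(u)=u\cosh(u)>0$ for $u>0$, so $k$ has a unique positive zero $u^{*}$, is negative on $(0,u^{*})$ and positive on $(u^{*},\infty)$. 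Consequently $g'$ is negative then positive on $(0,\infty)$, so $g$ decreases and then increases; since $g(0)=0$, this forces $g(s)\le\max\{0,g(1)\}$ for all $s\in[0,1]$.

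Finally, $g(1)=L\cosh(L)-2\sinh(L)$, which is $\le 0$ precisely when $L\le 2\tanh(L)$. The function $L\mapsto 2\tanh(L)-L$ is positive for small $L>0$ (its Taylor expansion is $L-\tfrac{2}{3}L^{3}+\cdots$) and $L_{0}$ is its first positive zero, so $2\tanh(L)-L>0$ on $(0,L_{0})$; hence $g(1)<0$ whenever $L<L_{0}$. Therefore $g\le 0$ on $[0,1]$, $\psi$ is non-increasing on $(0,1]$, the inequality $\sinh(Ls)\ge s^{2}\sinh(L)$ holds, and we conclude $g(L,x)=Le^{-|x|/2}$ for all $x$ when $L<L_{0}$. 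The numerical value $L_{0}\approx 1.91501$ can be verified directly.

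The argument is elementary calculus throughout; the only genuine step is spotting the substitution $s=e^{-|x|/2}$ and the reduction to monotonicity of $\psi(s)=\sinh(Ls)/s^{2}$, after which two differentiations and the auxiliary function $k$ settle the sign. The one mild point requiring care is confirming that $2\tanh(L)-L$ is strictly positive on the whole interval $(0,L_{0})$, not merely at isolated points, so that the hypothesis $L<L_{0}$ really yields $g(1)<0$.
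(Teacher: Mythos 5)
Your proof is correct, and after the change of variables $s = e^{-|x|/2}$ your $\psi(s) = \sinh(Ls)/s^2$ is exactly the paper's auxiliary function $j(x) = e^x\sinh(Le^{-x/2})$, so the two arguments share the same reduction: prove the inequality by showing this single function is monotone on the relevant range. Where you differ is in \emph{how} that monotonicity is established. You differentiate twice, pass through two auxiliary functions ($g$ and $k$), establish a ``decreasing then increasing'' profile for $g$, and then check the endpoint value $g(1) = L\cosh L - 2\sinh L \le 0$. The paper's proof is shorter and arguably more illuminating: it observes that the critical-point equation for $j$ is precisely $2\tanh(Le^{-x/2}) = Le^{-x/2}$, which has no solution on $[0,\infty)$ because $Le^{-x/2} \le L < L_0$ and $L_0$ is by definition the only positive root of $2\tanh u = u$; then a single check that $j'(0) = \sinh L - \tfrac{L}{2}\cosh L > 0$ (again the hypothesis $L < L_0$) pins down the sign. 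So the paper directly exploits the structural fact that $L_0$ was \emph{defined} as the root of the critical-point equation, whereas your argument rediscovers the role of $2\tanh L - L$ through the endpoint evaluation $g(1)$. Both are valid; the paper's is the cleaner route given how $L_0$ enters the statement, while yours is a somewhat more mechanical but self-contained calculus verification. One small remark: your final caveat about $2\tanh L - L$ being strictly positive on all of $(0,L_0)$ is easily dispatched (its derivative $2\sech^2 L - 1$ changes sign exactly once, so the function increases then decreases, forcing a single positive zero), and you should include that sentence rather than leave it as a worry.
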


\begin{proof} Recall that $$g(L,x) = \max\left(Le^{-|x|/2}, \sinh^{-1}(e^{-|x|}\sinh{L})\right).$$
Let $L < L_0$ and consider the function $j(x)=e^x \sinh(L e^{-x/2})$. 
It has a critical point precisely when
$$2\tanh(Le^{-x/2}) = Le^{-x/2}.$$ 
Since $L < L_0$, we have $Le^{-x/2} < L_0$  when $x\ge 0$, so $j$ has
no critical points in the interval $[0,\infty)$. Since $j'(0)=\sinh L-\frac{L}{2}  \cosh L >0$, 
$j$ is  increasing
on  the interval $[0, \infty)$.  Therefore,
$$j(x)=e^x \sinh(L e^{-x/2}) \geq \sinh(L)=j(0)$$
for all $x\ge 0$, so
$$Le^{-x/2} \geq \sinh^{-1}(e^{-x} \sinh(L))$$
for all $x\ge 0$.
Thus, $g(L,x) = L e^{-|x|/2}$ for all $x$.
\end{proof}

From our initial analysis of the hill function, we know that $G(t)$ is an increasing function on $t \in [0, \infty)$.
It follows that $G(g(L,x))$ is a decreasing function for $x \in [0, \infty)$. 
Therefore, we can approximate $G(g(L,x))$ by a step function from below.

To compute the values of $G(g(L,x))$, recall that \hbox{$G(t) = h(c(t)-t)-h(c(t))$}. 
The function $c(t)$ can be computed to arbitrary precision from the equation 
$$t \, h'(c(t)) = h(c(t)) - h(c(t)-t).$$
In particular, variable precision arithmetic can give us truncated decimal expansions of values of $G(g(L,x))$. 
We sample at a collection of points to obtain a step function where $G(g(L,x))$ dominates.

We use these computations to build $s(x) \leq Q(L,x)$ on some interval $[-a,a]$. Outside of that interval, we set $s(x) = 0$. The graph of $-s(x)$ gives us the boundary of a polygonal region contained in ${\mathcal T}^L$. 

Using the Schwarz-Christoffel mapping toolbox developed by Toby Driscoll \cite{SC}, the images of the points
$0$ and $2 \cos^{-1}\left(-\sinh\left(\frac{L}{2}\right)\right)i$ are computed under a Riemann mapping of the 
approximation of $\mathcal{T}^L$ to the upper half plane. Computing the hyperbolic distance between the 
images provides the result. The Schwarz-Christoffel mapping toolbox provides precision and error estimates. 
The error bounds are on the order of $10^{-5}.$

We found that the optimal bound is given when $L$ is approximately $1.48$. 
Using $L = 1.48$, the point 
$$B=c_1(L)i=2 \cos^{-1}\left(-\sinh\left(\frac{L}{2}\right)\right)i \approx 5.027888826784i$$
and  
$$e^{d_{{\mathcal T}^L}(ic_1(L),0)}\approx 7.16947.$$
A truncated version of the output provides the values of $G(L)$, $HPL(0)$,  and $HPL(B)$, 
where $HPL:\mathcal{T}^L\to \Hp$ is a Riemann mapping from $\mathcal{T}^L$ to the upper half-plane.
We also have \hbox{$H(L)=d_{\Hp}(HPL(0), HPL(B))$} 
and \hbox{$K(L)=exp(d_{{\mathcal T}^L}(ic_1(L),0)$}.

\vspace{.5in}
\begin{framed}

\begin{verbatim}
L=1.48
G(L) = 1.327185362837166
HPL(0) = 0.000007509959438 + 0.009347547230674i
HPL(B) =  0.000009420062234 + 0.067016970686742i
H(L) = 1.969831901361628
K(L) = 7.169471208698489
\end{verbatim}
\end{framed}

\affiliationone{
   M. Bridgeman and A. Yarmola\\
   Department of Mathematics\\
Boston College\\
Chestnut Hill, Ma 02467\\
   USA
   \email{\\
   bridgem@bc.edu\\
   yarmola@bc.edu}}
\affiliationtwo{
   R. Canary\\
      Department of Mathematics\\
University of Michigan\\
   Ann Arbor, MI 41809\\
    USA
   \email{canary@umich.edu}}

\end{document}